\documentclass[12pt]{amsart}
\usepackage{amsmath}
\usepackage{amscd}
\usepackage{amssymb}
\usepackage{amsfonts}
\usepackage{graphicx}
\usepackage{wrapfig}
\usepackage{amsthm}
\usepackage{euscript}
\usepackage[utf8]{inputenc}
\usepackage[english]{babel}
\usepackage{pgf,tikz}
\usepackage[colorlinks, citecolor=blue]{hyperref}

\usepackage{comment}
\excludecomment{hidden}

\def\RR{\mathbb{R}}

\def\SS{\mathbb{S}}

\def\rd{\overset{\circ}{Ric}}
\def\R{\overset{\circ}R}

\def\<{\langle}
\def\>{\rangle}

\newtheorem{theorem}{Theorem}[section]
\newtheorem{lemma}[theorem]{Lemma}
\newtheorem{corollary}[theorem]{Corollary}
\newtheorem{proposition}[theorem]{Proposition}

\theoremstyle{definition}

\theoremstyle{remark}
\newtheorem{remark}[theorem]{Remark}

\numberwithin{equation}{section}

\usepackage[utf8]{inputenc}
\usepackage{tikz}
\usetikzlibrary{patterns}
\pgfdeclarepatternformonly{my dots}{\pgfqpoint{-1pt}{-1pt}}{\pgfqpoint{2.5pt}{2.5pt}}{\pgfqpoint{6pt}{6pt}}%
{
	\pgfpathcircle{\pgfqpoint{0pt}{0pt}}{.5pt}
	\pgfpathcircle{\pgfqpoint{7pt}{7pt}}{.5pt}
	\pgfusepath{fill}
}
\begin{document}

\title[Four-dimensional shrinkers]
{Four-dimensional shrinkers with pinched curvatures}

\author{Xiaodong Cao}
\address{Department of Mathematics, Cornell University, Ithaca, NY 14853}
\email{xiaodongcao@cornell.edu}

\author{Jia-Yong Wu}
\address{Department of Mathematics and Newtouch Center for Mathematics, Shanghai University, Shanghai 200444, China}
\email{wujiayong@shu.edu.cn}

\subjclass[2010]{Primary 53C25; Secondary 53C20, 53C21}
\dedicatory{}
\date{\today}
\keywords{Gradient shrinking Ricci solition; Ricci flow; four-manifolds; Einstein manifold;
pinching curvature; biorthogonal curvature; Hitchin-Thorpe inequality}

\begin{abstract}
In this paper, we first classify complete four-dimensional shrinkers under a pinching condition on the $2$nd-Ricci curvature. We then classify closed four-dimensional shrinkers with (weighted) integral parameter-pinching of the half Weyl curvature under several pinching assumptions on the biorthogonal curvature. Finally, we establish a Hitchin-Thorpe type inequality for closed non-trivial four-dimensional
shrinkers under some assumptions on the half Weyl curvature and the biorthogonal curvature.
\end{abstract}
\maketitle

\section{Introduction}\label{Int1}
Let $(M,g)$ be an $n$-dimensional complete Riemannian manifold and let $f$
be a smooth function on $M$. The triple $(M, g, f)$ is called a
\emph{gradient shrinking Ricci soliton} or simply a \emph{shrinker}
(see \cite{[Ham]}), if there exists a constant $\lambda>0$ such that
\begin{equation}\label{Eq1}
Ric+\mathrm{Hess}\,f=\lambda  g,
\end{equation}
where $Ric$ denotes the Ricci curvature and $\text{Hess}\,f$ denotes the Hessian of $f$ on $(M,g)$. When $f$ is constant, the shrinker is referred to as trivial. Shrinkers are self-similar solutions to the Ricci flow and play a central role in analyzing singularity formation
of the Ricci flow \cite{[Ham]}. We refer readers to the comprehensive surveys \cite{[Cao1],[Cao2]} and the references therein for background on Ricci solitons.

Extensive classification results for shrinkers have been established over the past decades.
The full classification in dimensions two and three is already complete, as documented in
\cite{[Ham],[CCZ],Iv,[NW],[Pe2]}. In dimension four and higher, the classification problem
remains largely open, yet substantial progress has been achieved by imposing various
curvature pinching restrictions; see, e.g., \cite{CaCh,CJZ,CaXi,CaJT,CWZ,Cat13,CWy,CheZh,LNW,[MS],[MuW17],Nab,[PW],WWW,Zhz}
and the references therein. More recently, all K\"ahler shrinking surfaces have been
fully classified in \cite{LW23} by combining earlier works \cite{BCCD,CCD,CDS}.

In this paper, we focus on rigidity and topological theorems for four-dimensional shrinkers
under some new curvature pinching hypotheses. We briefly recall major prior rigidity results
to contextualize our works.
Cao \cite{Ca07} proved that any compact shrinker with positive curvature operator 
is Einstein if it satisfies an integral inequality. B\"ohm-Wilking \cite{BW} applied the 
Ricci flow to prove that every closed shrinker with two-positive curvature operators must 
be a quotient of
the standard $n$-sphere $\mathbb{S}^n$. Brendle-Schoen  \cite{bs} used Ricci flow
arguments to show that closed shrinkers with weakly $\frac14$-pinched sectional curvature
are isometric to $\SS^4$, $\mathbb{RP}^4$ or $\mathbb{CP}^2$ equipped with their canonical
metrics. Catino \cite{[Cat]} adopted elliptic PDE methods to obtain a weaker variant of
the Brendle-Schoen rigidity theorem, which states that every simply, connected, oriented,
complete four-dimensional shrinker satisfying suitable sectional curvature pinching
condition is isometric to the standard ${\RR}^4$, $\SS^4$ or $\mathbb{CP}^2$.
In addition, more rigidity results for closed shrinkers have been obtained under
various pinching curvature conditions; see, e.g., \cite{CaTr16,CaJW,Cat16,Wj}
and the references therein.

Our first result delivers a classification of four-dimensional shrinkers under a
pinching condition on the $2$nd-Ricci curvature, generalizing  Catino's result under
the pinching condition on the sectional curvature \cite{[Cat]}. Recall that the
$2$nd-Ricci curvature on $(M,g)$ satisfies $Ric_2(x)\geq H$ for some constant
$H=H(x)$ at a point $x\in M$ if
\[
Rm(e_1, e_2, e_1, e_2)+Rm(e_1, e_3, e_1, e_3)\ge H
\]
for any three orthonormal vectors $\{e_1, e_2, e_3\}$ in $T_xM$, where $Rm$ denotes the Riemannian curvature tensor. If this inequality holds at all points $x\in M$, we say that the $2$nd-Ricci curvature
of $(M,g)$ satisfies $Ric_2(M)\ge H$ for some $H\in C^\infty(M)$. The $2$nd-Ricci curvature can be regarded as an interpolation between the Ricci curvature and the sectional curvature. For more details, we refer the reader to \cite{Shen,Wh} and the references therein.

\begin{theorem}\label{Rig1}
Let $(M,g,f)$ be a simply connected, oriented, complete four-dimensional shrinker. If
\begin{equation}\label{pinicon}
Ric_2(M)\ge2\varepsilon R
\end{equation}
for some $\varepsilon>\frac{1}{24}$, where $R$ denotes the scalar curvature of $(M,g)$,
then $(M,g,f)$ is isometric to either $\mathbb{R}^4$, $\SS^4$, or $\mathbb{CP}^2$ with its
standard metric.
\end{theorem}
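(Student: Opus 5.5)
The plan is to reduce Theorem \ref{Rig1} to the classification of half conformally flat four-dimensional shrinkers. Following Catino's elliptic strategy \cite{[Cat]}, I would show that the pinching \eqref{pinicon} forces $W^+\equiv 0$ or $W^-\equiv 0$. I would then invoke the Chen--Wang classification: a half conformally flat four-dimensional shrinker is a quotient of $\RR^4$, $\SS^3\times\RR$, $\SS^4$ or $\mathbb{CP}^2$. The remaining step is to use simple connectivity, orientation and the pinching itself to discard the unwanted models.

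\textbf{Step 1 (algebraic consequences of \eqref{pinicon}).} I would first split off the trivial case. By Chen's result $R\ge 0$ on any shrinker, and by the strong maximum principle for $\Delta_f R = R-2|Ric|^2$ either $R>0$ or $R\equiv 0$. If $R\equiv0$ the soliton is the Gaussian soliton, so $(M,g)$ is $\RR^4$.

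Assume now $R>0$. At a point with orthonormal frame $\{e_i\}$ I would use the decomposition $Rm=W+\frac12\,\mathring{Ric}\owedge g+\frac{R}{24}g\owedge g$. This gives
\[
Rm_{1212}+Rm_{1313}=W_{1212}+W_{1313}+\tfrac12\bigl(2\mathring{R}_{11}+\mathring{R}_{22}+\mathring{R}_{33}\bigr)+\tfrac{R}{6}.
\]
Summing over suitable frames and using $\Lambda^2=\Lambda^+\oplus\Lambda^-$ in dimension four, the hypothesis $Ric_2\ge 2\varepsilon R$ converts into an upper bound for the largest eigenvalue of $W^\pm$. The bound has the form $\mu_{\max}(W^\pm)\le c(\varepsilon)R+(\text{terms in }\mathring{Ric})$, with $c(\varepsilon)<\frac{R}{6}$-type strictness exactly when $\varepsilon>\frac1{24}$. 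The Ricci contributions must be controlled by eliminating them: average over the complementary frame $\{e_4\}$ and use $\mathring{Ric}$ tracefree. In the same way \eqref{pinicon} yields the Ricci lower bound $Ric\ge 3\varepsilon R\,g$, which I keep for later.

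\textbf{Step 2 (maximum principle).} I would use the Weitzenb\"ock-type identity for shrinkers,
\[
\Delta_f W^\pm = 2\lambda W^\pm - 2 (W^\pm)^2 - 4 (W^\pm)^\# + \mathring{Ric}*\mathring{Ric}\text{-terms},
\]
which follows from the soliton equations and Derdzinski's computation. I would apply it to $u=|W^+|^2/R^{2}$, or to the ratio of the top eigenvalue of $W^+$ to $R$, combined with $\Delta_f R=R-2|Ric|^2$. The pinching from Step 1 should make the reaction terms nonpositive where $W^+\neq 0$. Then $u$ is $f$-subharmonic up to a gradient term, and the maximum principle gives $W^+\equiv0$; by symmetry the same argument applies to $W^-$, and one of the two suffices. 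In the compact case this is immediate.

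In the complete noncompact case, this step is where I expect the \textbf{main obstacle}. I would use Munteanu--Wang type curvature estimates (positive Ricci plus the Cao--Zhou growth $f\sim r^2/4$) to get $|Rm|\le CR$, hence bounded $u$. I would then run an Omori--Yau / cutoff argument with cutoffs in $f$ and weighted volume $e^{-f}$, which is finite, or integrate $\Delta_f u$ against $u^{p}e^{-f}\phi^2$. Either route should kill the boundary terms.

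\textbf{Step 3 (classification).} With $W^+\equiv0$ (or $W^-\equiv0$) I would apply Chen--Wang. The possibilities $\SS^3\times\RR$ and its quotients violate \eqref{pinicon}: take $e_1$ tangent to $\RR$, so that $Ric_2=0<2\varepsilon R$. The Ricci lower bound $Ric\ge3\varepsilon R g>0$ also excludes them, and together with $R>0$ it rules out flat quotients. Simple connectivity removes nontrivial quotients such as $\mathbb{RP}^4$. Hence $(M,g)$ is $\SS^4$ or $\mathbb{CP}^2$, the latter with orientation chosen so that the vanishing half of the Weyl tensor is the correct one. Together with the Gaussian case from Step 1, this yields $\RR^4$, $\SS^4$ or $\mathbb{CP}^2$.
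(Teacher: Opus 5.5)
Your Steps 1 and 3 are essentially fine. In Step 1 the $\rd$-terms cancel exactly if you add the $Ric_2$ inequalities for the triples $(e_1;e_2,e_3)$ and $(e_4;e_2,e_3)$. This gives $K^\perp_{\max}\le(\frac14-2\varepsilon)R$, hence $\lambda_3^\pm\le 2K^\perp_{\max}-\frac R6\le(\frac13-4\varepsilon)R<\frac R6$ by Lemma \ref{ineqlem}.

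\textbf{The gap is Step 2, which is the heart of your argument and does not work as stated.} In \eqref{Weitz} the coupling term $\langle Ric\circ Ric,W^\pm\rangle=\langle\rd\circ\rd,W^\pm\rangle$ has no sign and is only bounded by $C|\rd|^2|W^\pm|$. Run the maximum principle for $u=|W^+|/R$, using Kato, Lemma \ref{detWup} and $\Delta_fR=2\lambda R-2|Ric|^2$. At a maximum point you only get
\[
0\ \ge\ \frac R2\bigl(R-6\lambda_3^+\bigr)+2|\rd|^2-\frac R2\,\frac{\langle\rd\circ\rd,W^+\rangle}{|W^+|^2}.
\]
The first two terms are nonnegative, the first by Step 1. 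The last term, however, is only controlled by $CR|\rd|^2/|W^+|$. Nothing prevents it from dominating when $|W^+|\ll R$ at the maximum, so no contradiction follows. The same coupling term obstructs the version with $\lambda_3(W^+)/R$. This is exactly the term for which the paper needs the extra integral hypothesis \eqref{bddeW} in Theorem \ref{Rigid2}. A pointwise bound on $\lambda_3(W^\pm)/R$ cannot make ``the reaction terms nonpositive''.

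Your noncompact ``main obstacle'' is also not real. Once $R>0$, the bound $Ric_2\ge2\varepsilon R>0$ makes $M$ closed by \cite{LN}.

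The paper avoids the Weyl tensor and proves instead that $g$ is Einstein:
\begin{itemize}
\item On the closed shrinker it integrates the Bochner formula of Lemma \ref{l-eqric}.
\item It rewrites $\int_M|\nabla\rd|^2$ through the tensor $F$ and Lemma \ref{l-id}.
\item It uses \eqref{pinicon} only algebraically, via Corollary \ref{con-com}, i.e.\ $\sum_{i<j}(\lambda_i-\lambda_j)^2\overline K_{ij}\ge0$ for $\overline{Rm}=Rm-\frac\varepsilon2Rg\circ g$.
\item Choosing $s=7/8$ kills the cubic term and leaves $(16\varepsilon-\frac23)\int_MR|\rd|^2\le0$.
\item The Einstein case is then settled by \cite{CaTr22}.
\end{itemize}

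If you want to keep your route, the simplest repair is to drop Step 2 altogether. The bound $\lambda_3^\pm<\frac R6$ from Step 1 says exactly that $g$ has positive isotropic curvature. The classification of \cite{LNW} (quotients of $\SS^4$ or $\SS^3\times\RR$), your exclusion of $\SS^3\times\RR$, and simple connectivity then give $\SS^4$ directly, with no maximum principle.
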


We state some results related to Theorem \ref{Rig1}. On the one hand, for a simply connected four-dimensional Einstein manifold $(M,g)$ with $Ric=\lambda\, g$ for some constant $\lambda>0$,
if the sectional curvature satisfies $K\le\frac{5}{6}\lambda$, then by Corollary 1.3 in \cite{CaTr22} (or
a weaker version in Corollary 1.3 of \cite{CuSu}), $(M,g)$ must be isometric to either $\SS^4$
or $\mathbb{CP}^2$ with its standard metric. Note that $Ric_2(M)\ge\frac{1}{6}\,\lambda(=\frac{1}{24}R)$ implies $K\le\frac{5}{6}\lambda$
(see Proposition \ref{relat} below). Therefore, if the Einstein manifold $(M,g)$ satisfies
\begin{equation}\label{rigcond}
Ric_2(M)\ge\frac{1}{24}R,
\end{equation}
then Einstein manifold $(M,g)$ must be isometric to either $\SS^4$ or $\mathbb{CP}^2$ \
with its standard metric.
On the other hand, Li and Ni \cite{LN} showed that any complete shrinker with
$Ric_2(M)>0$ is closed (see also \cite{WW}). Accordingly, Theorem \ref{Rig1}
may be regarded as a synthesis of the Einstein manifold classification from
\cite{CaTr22,CuSu} and the compactness criterion of shrinkers proven in \cite{LN}.

The proof of Theorem \ref{Rig1} is inspired by the work of Catino \cite{[Cat]}.
Our argument proceeds in two steps. First, we demonstrate that any shrinker
satisfying the pinching condition \eqref{pinicon} is necessarily Einstein.
This step relies on the Weitzenb\"ock formula for the squared norm of the traceless
Ricci curvature, combined with several delicate algebraic curvature estimates arising
from the lower bound on the $2$nd-Ricci curvature. Second, for the Einstein manifold
satisfying \eqref{pinicon}, we directly apply Corollary 1.3 in \cite{CaTr22} to
complete the classification.

Our second objective is to classify closed four-dimensional shrinkers under several
pinching assumptions on the biorthogonal curvature. Recall that in \cite{Sea,Noro},
at each point $x\in M$, the biorthogonal (sectional) curvature of a plane
$P\subset T_x(M)$ is defined by
\[
K^{\perp}(x, P)=\frac{K(x, P)+K(x, P^{\perp})}{2},
\]
where $P^\perp$ denotes the orthogonal complement of $P$ and $K$ is the sectional curvature.
For each point $x\in M$, the minimum and maximum biorthogonal curvatures are
defined by
\[
K^\perp_{min}(x)=\min_{P\subset T_xM}\, K^{\perp}(x, P)\quad\text{and}\quad
K^\perp_{max}(x)=\max_{P\subset T_xM}\, K^{\perp}(x, P),
\]
respectively. We often denote $K^{\perp}(P)$, $K^\perp_{min}$, $K^\perp_{max}$, and
omit the base point $x$ for simplicity. It is known that a four-dimensional manifold
is Einstein if and only if  $K^{\perp}=K$, i.e. $K^{\perp}(P)=K(P)$ for every plane $P\subset T_xM$ at each point $x\in M$. A four-dimensional manifold $M$ is locally conformally flat if and only if $K^{\perp}=\frac{1}{12}R$.  Clearly, positive sectional curvature implies positive biorthogonal curvature. However, positive biorthogonal curvature does not even imply
positive Ricci curvature; see e.g., $\SS^3\times\SS^1$ with its standard metric.
Indeed, positive biorthogonal curvature can be viewed as an intermediate condition
between positive sectional curvature and positive scalar curvature. According to definitions of $K^{\perp}$ and $Ric_2(M)$, two curvatures
do not imply each other.

Inspired by \cite{CaTr22}, we prove some rigidity theorems for closed four-dimensional shrinkers with
integral parameter-pinching of the harmonic half Weyl curvature under several pinching assumptions
on the biorthogonal curvature.
\begin{theorem}\label{Rigid2}
Let $(M,g,f)$ be a simply connected, oriented, closed four-dimensional shrinker
satisfying
\begin{equation}\label{bddeW}
\int_M|\delta W^{\pm}|^2dv\le k\int_M R|W^{\pm}|^2dv
\end{equation}
for some $0\le k\le1/4$, where $W^{\pm}$ denotes the half Weyl curvature and $\delta$ is the divergence operator. Let $\lambda_1(M)$ be the first eigenvalue of the Laplacian on functions. If one of the following conditions holds:
\begin{eqnarray}\label{list}
&K^{\perp}\le\frac{2(1-4k)(1-2k)R^2+(3-8k)\lambda_1(M)R}{12[(1-4k)R+\lambda_1(M)]},\label{la}\\
&K^\perp\ge\frac{(1-16k^2)R^2+8kR\lambda_1(M)}{24[(1-4k)R+\lambda_1(M)]},\label{lb}\\
&K^\perp_{min}\ge\frac{(1-16k^2)R+8k\lambda_1(M)}{4(1-4k)(1-2k)R+2(3-8k)\lambda_1(M)}K^\perp_{max},\label{lc}\\
& Ric\ge a \,\,\, \text{and}\,\,\, K^\perp\le\frac{2(1-4k)(1-2k)+1-\tfrac{8}{3}k}{4(1-3k)}a \,\,\, \text{for some constant}\,\,\, a>0, \label{1d}
\end{eqnarray}
then $(M,g,f)$  is isometric to either $\SS^4$ or $\mathbb{CP}^2$ with its standard metric.
\end{theorem}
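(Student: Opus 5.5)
The plan has two stages, as in Theorem \ref{Rig1}. First show that any shrinker satisfying \eqref{bddeW} together with one of \eqref{la}--\eqref{1d} has $W^{\pm}\equiv0$ (or is forced to be Einstein). Then invoke the known classification of half conformally flat shrinkers and of Einstein manifolds, essentially Corollary 1.3 of \cite{CaTr22} and the Hitchin/Friedrich classification. This yields $\SS^4$ or $\mathbb{CP}^2$, since $M$ is closed, simply connected and has positive scalar curvature.

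\textbf{Step 1: the Weitzenb\"ock identity.} I would start from the Weitzenb\"ock-type formula for the half Weyl tensor in dimension four,
\[
\tfrac12\Delta|W^\pm|^2=|\nabla W^\pm|^2+\tfrac12R|W^\pm|^2-18\det W^\pm+\langle W^\pm,\text{(terms from $\delta W^\pm$)}\rangle.
\]
I would integrate it over the closed manifold $M$. The $\nabla\delta W^\pm$ term then becomes $-c\int|\delta W^\pm|^2$ after integration by parts.

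\textbf{Step 2: the refined Kato inequality.} On $\ker\delta$-type tensors one has $|\nabla W^\pm|^2\ge\frac53|\nabla|W^\pm||^2$, with a correction involving $|\delta W^\pm|^2$ in general. I would apply this to $u=|W^\pm|^{\alpha}$ for a suitable exponent $\alpha$ depending on $k$. Using \eqref{bddeW} to absorb the $\delta W^\pm$ terms gives an inequality of the form
\[
A(k)\int|\nabla u|^2\le\int\bigl(36\,\text{(top eigenvalue of }W^\pm)-c(k)R\bigr)u^2 .
\]
The Poincar\'e/Rayleigh bound $\int|\nabla u|^2\ge\lambda_1(M)\int(u-\bar u)^2$ is where $\lambda_1(M)$ enters. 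To use it I must either handle the mean $\bar u$, or apply it to the combination arising from $f$-weighted integrals.

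\textbf{Step 3: the algebraic curvature estimates.} The key link is that the eigenvalues of $W^\pm$ are controlled by biorthogonal curvature. If $P$ is a plane with $\omega=e_1\wedge e_2\pm e_3\wedge e_4$, then
\[
K^\perp(P)=\tfrac{R}{12}+\tfrac12W^\pm(\omega,\omega)/|\omega|^2\cdots,
\]
so that the largest eigenvalue satisfies $w_3^\pm=2K^\perp_{max}-\frac R6$ (up to normalization) and $w_1^\pm=2K^\perp_{min}-\frac R6$. With this:
\begin{itemize}
\item an upper bound \eqref{la} bounds $w_3$ from above;
\item a lower bound \eqref{lb} bounds $w_1$ from below, hence bounds $w_3\le-2w_1$ via tracelessness;
\item the ratio \eqref{lc} combines both;
\item in \eqref{1d}, $Ric\ge a$ gives $\lambda_1\ge\frac43a$ (Lichnerowicz) and $R\ge4a$, reducing to the form of \eqref{la}.
\end{itemize}
Plugging into the integral inequality forces the right side to be nonpositive, so $\nabla u=0$ and equality holds throughout. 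Then either $W^\pm=0$, or $W^\pm$ has constant norm with the equality structure in Kato, which gives the K\"ahler–Einstein $\mathbb{CP}^2$ case.

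\textbf{Main obstacle.} The hardest step is Step 2: matching the exact constants so that the thresholds in \eqref{la}--\eqref{lc} appear. This needs the precise choice of $\alpha$ as a function of $k$, and the exact weight between the $\lambda_1$ term and the $R$ term, which requires an appropriately normalized test function. I would first try the exponent making the Kato coefficient and the $k$-terms balance, then optimize the split of $\int|\nabla u|^2$ between the Kato and Poincar\'e contributions. I expect the weighted form, via $\Delta_f$ and the soliton identity $R+\Delta f=4\lambda$, to be needed to convert $\delta W^\pm$ into curvature terms involving $\nabla f$.
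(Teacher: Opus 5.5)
There is a genuine gap, and it sits exactly where you put your ``main obstacle'': your plan has no mechanism by which $\lambda_1(M)$ can enter. If you treat one half at a time, the Poincar\'e inequality for $u=|W^\pm|^\alpha$ only gives $\int_M|\nabla u|^2dv\ge\lambda_1(M)\int_M(u-\bar u)^2dv$. Since $u\ge0$, its mean $\bar u$ is positive and nothing controls it, so you never obtain a term $\lambda_1(M)\int_M u^2dv$. Switching to $e^{-f}dv$ and $\lambda_{1,f}$ does not help, because the weighted mean is still positive. All you can then do is drop the gradient term. Do this in the integrated identity $0=\int_M\big(|\nabla W^\pm|^2+\frac R2|W^\pm|^2-18\det W^\pm-2|\delta W^\pm|^2\big)dv$, using $6\det W^\pm\le\lambda_3^\pm|W^\pm|^2$, $\lambda_3^\pm\le 2K^\perp_{max}-\frac R6$ and \eqref{bddeW}. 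The result is rigidity only under $K^\perp\le\frac{(1-2k)R}{6}$. That is the value at $\lambda_1(M)=0$ of the right-hand side of \eqref{la}, which is strictly increasing in $\lambda_1(M)$ for $k<1/4$. So you get a strictly weaker theorem, and never one beyond $K^\perp\le\frac R6$, which is already covered by nonnegative isotropic curvature (Remark~\ref{remK2}). This holds whichever Kato inequality you use. Incidentally, the refined constant $\frac53$ requires $\delta W^\pm=0$; the paper uses only $|\nabla W^\pm|\ge|\nabla|W^\pm||$.

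The missing idea is to couple $W^+$ and $W^-$. Suppose $M$ is not half conformally flat, and choose $t>0$ with $\int_M(|W^+|-t|W^-|)dv=0$. Add the integrated identity for $W^+$ to $t^2$ times the one for $W^-$. Then use plain Kato together with $|\nabla a|^2+t^2|\nabla b|^2=\frac12|\nabla(a+tb)|^2+\frac12|\nabla(a-tb)|^2$. Poincar\'e now applies to the mean-zero function $|W^+|-t|W^-|$. After \eqref{bddeW} you are left with $\int_M\mathcal P(t)\,dv\le0$, where $\mathcal P(t)$ is a pointwise quadratic in $t$ with cross term $-t\lambda_1(M)|W^+||W^-|$. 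The threshold in \eqref{la} is precisely the condition that the discriminant of $\mathcal P$ be nonpositive. Verifying this needs both $\lambda_3^++\lambda_3^-\le2K^\perp_{max}-\frac R6$ and $|\lambda_3^+-\lambda_3^-|\le2(K^\perp_{max}-\frac R{12})$ from Lemma~\ref{ineqlem}.

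Your identification $w_3^\pm=2K^\perp_{max}-\frac R6$, and likewise for $w_1^\pm$, is not correct. $K^\perp$ only sees the sums $\lambda_i^++\lambda_i^-$, and it is the difference bound that makes the coupling work. The equality case forces $|W^+||W^-|\equiv0$, which by analyticity contradicts non-half-conformal-flatness. Chen--Wang's classification of half conformally flat shrinkers then finishes the proof. There is no separate ``Kato equality gives K\"ahler--Einstein $\mathbb{CP}^2$'' branch.

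Reducing \eqref{lb}--\eqref{1d} to \eqref{la} is the right move. The paper does this via $K^\perp(P_1)+K^\perp(P_2)+K^\perp(P_3)=\frac R4$, and for \eqref{1d} via Lichnerowicz and $R\ge4a$.
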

\begin{remark}\label{remK2}
We make some comments on the pinching conditions in Theorem \ref{Rigid2}.
\begin{itemize}
\item [(1)] When $k=1/4$, condition \eqref{la} simplifies to $K^{\perp}\le\frac{1}{12}R$, and
our result is not new. Indeed, by \cite{CoRi,Sea}, if a manifold satisfies
$K^{\perp}\le\frac{1}{6}R$, then it naturally has nonnegative isotropic
curvature, which immediately yields a full classification of shrinkers by \cite{LNW}.

\item [(2)] On the standard $\SS^4$ or $\mathbb{CP}^2$, one has $\delta W^{\pm}=0$
 and $K^{\perp}=\frac{1}{12}R$. For $k=1/4$, conditions \eqref{bddeW} and \eqref{lb}
reduce to
\[
\int_M|\delta W^{\pm}|^2dv\le \frac{1}{4}\int_M R|W^{\pm}|^2dv
\quad \mathrm{and}\quad K^{\perp}\ge\frac{1}{12}R.
\]
In this sense, Theorem \ref{Rigid2} can be
interpreted as a gap theorem characterizing these standard spaces.

\item [(3)] Again taking $k=1/4$, condition \eqref{lb} becomes $K^{\perp}\ge\frac{1}{12}R$.
This condition is neither stronger nor weaker than the curvature pinching condition
\eqref{pinicon} in Theorem \ref{Rig1}, which can be easily verified by inspecting 
the definitions of $K^{\perp}$and $Ric_2(M)$. From this comparison, we conclude that 
Theorem \ref{Rigid2} and Theorem \ref{Rig1} do not imply each other.

\item [(4)] The case $k=0$ forces $\delta W^{\pm}=0$ from \eqref{bddeW}, and the shrinker is Einstein
by \cite{WWW}. For Einstein manifolds, our biorthogonal curvature pinching bound
is stricter than the condition established by Cao and Tran (see Corollary 1.3 in \cite{CaTr22}). Consequently, our rigidity statement recovers no new results when $k=0$.

\item [(5)] Define a continuous decreasing function
\[
U(k):=\frac{2(1-4k)(1-2k)R^2+(3-8k)\lambda_1(M)R}{12[(1-4k)R+\lambda_1(M)]}\quad
\text{on $[0,1/4)$}.
\]
Taking $k\to 0+$, condition \eqref{la} turns into
\[
K^{\perp}\le U(0+)=\frac{2R^2+3\lambda_1(M)R}{12(R+\lambda_1(M))}.
\]
A direct computation indicates that $U(0+)>\frac{1}{6}R$. By continuity of $U(k)$,
there exists some $k_1\in(0,1/4)$, for example, we can take 
$k_1$ satisfying 
\[
0<k_1<\frac{R+2\lambda_1(M)-\sqrt{R^2+4\lambda_1(M)^2}}{8R},
\]
such that $U(k_1)>\frac{1}{6}R$;
the pinching bound $K^{\perp}\le U(k_1)$ is therefore weaker than $K^{\perp}\le\frac{1}{6}R$.
To our knowledge, our rigidity result is new in this parameter regime.
\end{itemize}
\end{remark}

The proof strategy of Theorem \ref{Rigid2} is inspired by \cite{Yang,CaTr22}.
We first prove that condition \eqref{la} implies the conclusion of the theorem.
The idea of the proof is as follows. If the half Weyl curvature vanishes, the conclusion
follows from Chen-Wang's classification \cite{CWy}. Otherwise, we derive a contradiction
by estimating the integral in Cao-Tran's Weitzenb\"ock formula \cite{CaTr16},
combined with the Kato inequality, the Poincar\'e inequality of the first eigenvalue,
and some algebraic curvature estimates \cite{CaTr22}. The other cases of Theorem
\ref{Rigid2} then follow from the fact that either \eqref{lb}, \eqref{lc}, or \eqref{1d}
implies \eqref{la} (see Lemma \ref{recond}). We note that Cao-Tran's pinching condition
on the biorthogonal curvature \cite{CaTr22} (see also \cite{DRF,Rib}) is weaker than ours,
because their proof uses the refined Kato inequality for harmonic
two-forms or harmonic Weyl curvature on four-dimensional manifolds, which
is not applicable in our setting.

By an argument similar to that of Theorem \ref{Rigid2}, we give another classification
of closed four-dimensional shrinkers with a weighted integral parameter-pinching condition on
the half Weyl curvature under an upper bound on the biorthogonal curvature.

\begin{theorem}\label{Rigid3}
Let $(M,g,f)$ be a simply connected, oriented, closed four-dimensional shrinker
satisfying \eqref{Eq1}. Assume that
\begin{equation}\label{piweyl}
\int_M\langle Ric\circ Ric, W^{\pm}\rangle e^{-f}dv\le 4k\int_M |W^{\pm}|^2e^{-f}dv
\end{equation}
for some $0\le k\le\lambda$. Let $\lambda_{1,f}(M)$ be the first eigenvalue of the $f$-Laplacian
on functions. If
\begin{equation}\label{weigh}
K^\perp\le\frac{(4\lambda-4k)(4\lambda-4k+R)+\lambda_{1,f}(M)(8\lambda-8k+R)}{12(4\lambda-4k+\lambda_{1,f}(M))},
\end{equation}
then $(M,g,f)$  is isometric to either $\SS^{4}$ or $\mathbb{CP}^2$ with its standard metric.
\end{theorem}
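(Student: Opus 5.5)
The plan mirrors the proof of Theorem \ref{Rigid2}, with the unweighted Weitzenb\"ock formula replaced by its weighted version. If $W^{\pm}\equiv 0$, the shrinker is half conformally flat, and Chen--Wang's classification \cite{CWy} shows that it is $\SS^4$ or $\mathbb{CP}^2$: the closed, simply connected case excludes the noncompact models. So assume $W^{\pm}\not\equiv 0$; we aim for a contradiction unless the metric is one of the standard models.

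\textbf{Weighted identity for $W^{\pm}$.} On a four-dimensional shrinker one has $\delta_f W^{\pm}$ expressed through the Cotton tensor, and a Weitzenb\"ock identity of Cao--Tran type (\cite{CaTr16}) in weighted form:
\begin{equation*}
\tfrac12\Delta_f|W^{\pm}|^2=|\nabla W^{\pm}|^2+2\lambda|W^{\pm}|^2-36\det W^{\pm}-\langle Ric\circ Ric,W^{\pm}\rangle.
\end{equation*}
The signs and constants must be checked against the paper's conventions. Integrating against $e^{-f}dv$ kills the left side. Hypothesis \eqref{piweyl} then replaces the $\langle Ric\circ Ric,W^{\pm}\rangle$ term by $4k\int|W^{\pm}|^2e^{-f}$, producing the factor $4\lambda-4k\ge0$.

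\textbf{Controlling the gradient and cubic terms.} For the gradient term, set $u=|W^{\pm}|$ and use the ordinary Kato inequality $|\nabla W^{\pm}|\ge|\nabla u|$; the refined Kato inequality is not available here. Then apply the weighted Poincar\'e inequality $\int|\nabla u|^2e^{-f}\ge\lambda_{1,f}(M)\int(u-\bar u)^2e^{-f}$, where $\bar u$ is the weighted mean, and split $\int u^2=\int(u-\bar u)^2+\bar u^2 V_f$ as in Yang's argument \cite{Yang}.

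For the cubic term, use the algebraic estimates from \cite{CaTr22}. Writing $W^{\pm}$ in terms of biorthogonal curvatures, its largest eigenvalue satisfies roughly $w_{max}\le 2K^\perp_{max}-\tfrac{R}{6}$, and $36\det W^{\pm}\le 6\,w_{max}|W^{\pm}|^2$ (up to constants). Hence
\begin{equation*}
36\det W^{\pm}\le (12K^\perp-R)|W^{\pm}|^2 .
\end{equation*}

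\textbf{Optimization.} Combining the pieces gives an inequality of the form
\begin{equation*}
0\ge\int\Big[(4\lambda-4k)+\lambda_{1,f}\theta-(12K^\perp-R)\Big]u^2e^{-f}+\text{(mean terms)},
\end{equation*}
where $\theta$ comes from the Poincar\'e step. The key step is choosing the interpolation between the Poincar\'e bound and the trivial bound $\int|\nabla u|^2\ge0$ optimally. The natural choice is a convex combination with weight $\lambda_{1,f}/(4\lambda-4k+\lambda_{1,f})$. This should turn the bracket into exactly
\begin{equation*}
\frac{(4\lambda-4k)(4\lambda-4k+R)+\lambda_{1,f}(8\lambda-8k+R)}{4\lambda-4k+\lambda_{1,f}}-12K^\perp ,
\end{equation*}
which is nonnegative by \eqref{weigh}.

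\textbf{Equality case.} Forcing equality everywhere gives that $u$ is constant, $\nabla W^{\pm}=0$, and equality holds in the algebraic estimates. Feeding this back into the weighted identity yields $\langle Ric\circ Ric,W^{\pm}\rangle$ structure consistent only with the Einstein case. For Einstein manifolds, \cite{CaTr22} (or direct arguments) gives $\SS^4$ or $\mathbb{CP}^2$.

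I expect the main obstacle to be this final bookkeeping: choosing the Poincar\'e/mean-value splitting so that the resulting constant matches \eqref{weigh} exactly, and handling the equality case rigorously rather than just obtaining a strict contradiction.
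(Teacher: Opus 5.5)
There is a genuine gap, and it is at the step you call bookkeeping. For a single function $u=|W^{\pm}|$, the weighted Poincar\'e inequality only gives
\[
\int_M|\nabla u|^2e^{-f}dv\ge\lambda_{1,f}(M)\Big(\int_M u^2e^{-f}dv-\bar u^2V_f\Big),\qquad V_f=\int_M e^{-f}dv .
\]
Nothing controls the mean term. By Cauchy--Schwarz, $\bar u^2V_f\le\int_M u^2e^{-f}dv$, with near-equality when $u$ is nearly constant. So no convex combination with the trivial bound $\int_M|\nabla u|^2e^{-f}dv\ge0$ gains anything. Use the correct normalization $\tfrac12\Delta_f|W^{\pm}|^2=|\nabla W^{\pm}|^2+2\lambda|W^{\pm}|^2-18\det W^{\pm}-\tfrac12\langle Ric\circ Ric,W^{\pm}\rangle$, together with Lemma \ref{detWup} and \eqref{piweyl}. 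Your route then reaches a contradiction at best when $2\lambda-2k+\tfrac R2-6K^\perp_{max}>0$, i.e. $K^\perp_{max}<\frac{4\lambda-4k+R}{12}$. That is the value of the right-hand side of \eqref{weigh} at $\lambda_{1,f}(M)=0$. The right-hand side is strictly increasing in $\lambda_{1,f}(M)$ when $k<\lambda$. So under \eqref{weigh} itself, your lower bound for the coefficient of $\int_M u^2e^{-f}dv$ can be negative, and the argument proves nothing.

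The missing idea, which is the paper's, is Yang's trick of coupling $W^+$ and $W^-$. If the shrinker is not half conformally flat, both are nonzero, and one proceeds as follows:
\begin{itemize}
\item Pick $t>0$ with $\int_M(|W^+|-t|W^-|)e^{-f}dv=0$.
\item Apply the Weitzenb\"ock formula to $|W^+|^2+t^2|W^-|^2$.
\item Use $|\nabla|W^+||^2+t^2|\nabla|W^-||^2=\tfrac12|\nabla(|W^+|+t|W^-|)|^2+\tfrac12|\nabla(|W^+|-t|W^-|)|^2$, and apply Poincar\'e to $|W^+|-t|W^-|$, which has weighted mean zero, so no mean term is left over.
\end{itemize}
This gives $0\ge\int_M\mathcal{Q}(t)e^{-f}dv$ for a pointwise quadratic $\mathcal{Q}$ with cross term $-t\lambda_{1,f}(M)|W^+||W^-|$. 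The exact constant in \eqref{weigh} is precisely the condition that its discriminant be nonpositive. This is checked with the joint bounds $\lambda_3^++\lambda_3^-\le2K^\perp_{max}-\tfrac R6$ and $|\lambda_3^+-\lambda_3^-|\le2(K^\perp_{max}-\tfrac R{12})$ of Lemma \ref{ineqlem}; a bound on each $\lambda_3^{\pm}$ separately cannot see this.

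Your endgame also needs to change: no Einstein step is needed, and your sketch of one is not justified. $\mathcal{Q}(t)\equiv0$ forces equality in Lemma \ref{ineqlem}, hence $|W^+||W^-|\equiv0$. That contradicts real-analyticity of $|W^{\pm}|^2$ when both are nonzero. So the shrinker is half conformally flat, and Chen--Wang's classification finishes.
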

\begin{remark}\label{wrem}
We make some comments on condition  \eqref{weigh} in Theorem \ref{Rigid3}.
\begin{itemize}
\item [(1)]
From \cite{ChZ17}, we know that $\lambda_{1,f}(M)>\lambda$ on any compact shrinker.
Consider the function
\[
F(\lambda_{1,f}(M))=\frac{(4\lambda-4k)(4\lambda-4k+R)+\lambda_{1,f}(M)(8\lambda-8k+R)}{12(4\lambda-4k+\lambda_{1,f}(M))}.
\]
This expression is nondecreasing as a function of $\lambda_{1,f}(M)$. Consequently, the
following inequality
\[
K^\perp\le \frac{(4\lambda-4k)(4\lambda-4k+R)+\lambda(8\lambda-8k+R)}{12(4\lambda-4k+\lambda)},
\]
implies condition \eqref{weigh}, which in turn guarantees the rigidity conclusion of Theorem \ref{Rigid3}.

\item [(2)]
When $k=\lambda$, condition \eqref{weigh} simplifies to $K^{\perp}\le\frac{1}{12}R$.
As noted in Remark \ref{remK2} (1), this pinching bound recovers a well-known
classification result and therefore yields no new geometric information.

\item [(3)] In the special case $k=0$, we cannot deduce $\delta W^{\pm}=0$
solely from the integral assumption \eqref{piweyl}. This suggests that
our result is novel for this parameter regime. Specifically, whenever
\[
4\lambda(4\lambda-R)+\lambda_{1,f}(M)(8\lambda-R)>0,
\]
condition \eqref{weigh} is weaker than $K^\perp\le\frac{1}{6}R$,
so our classification yields a new conclusion within this framework.
\end{itemize}
\end{remark}

Finally, we turn our attention to topological properties of closed four-dimensional shrinkers.
It is a classical fact that every closed four-dimensional Einstein manifold obeys the
\emph{Hitchin-Thorpe inequality}
\[
|\tau(M)|\le\frac{2}{3}\chi(M),
\]
where $\chi(M)$ and $\tau(M)$ denote the Euler number the signature of $M$, respectively.
This inequality provides a fundamental topological obstruction to the existence of
Einstein metrics on closed four-dimensional manifolds.

Since shrinkers are generalizations of Einstein manifolds, and non-trivial
four-dimensional shrinkers on connected sums
$\mathbb{CP}^2\sharp\, k(-\mathbb{CP}^2)$ $(k=1,2)$ satisfy the classical
Hitchin-Thorpe inequality, H.-D. Cao \cite{[Cao1]} raised a natural question:
does an analogous Hitchin-Thorpe type inequality hold for arbitrary closed
four-dimensional shrinkers? This problem has been explored under various
curvature assumptions in prior work; see \cite{CRZ,FG,Ma,Tad} and the references
therein. Most recently, Cao, Ribeiro Jr and Wondo \cite{CaJW} proved a
Hitchin-Thorpe type inequality under lower bounds on modified sectional and
scalar curvatures. Motivated by their strategy, we derive a new $(\varepsilon,\sigma)$-pinching
quantitative condition on biorthogonal curvature for such
an inequality to hold on non-trivial closed four-dimensional shrinkers.

\begin{theorem}\label{HT}
Let $(M,g,f)$ be an oriented, non-trivial, closed four-dimensional shrinker with
\eqref{Eq1} satisfying
\begin{equation}\label{dWpin}
\int_M|\delta W^{\pm}|^2dv\le \frac 16\int_M R|W^{\pm}|^2dv.
\end{equation}
If $\varepsilon\lambda\le K^{\perp}\le\sigma\lambda$ for some constants
$\varepsilon\le \sigma\le 1/2$, then
\[
|\tau(M)|<\left(\frac{2}{3}-\frac{22}{27}\cdot\frac{1-2\sigma}{12\varepsilon^2-8\varepsilon+2\sigma+1}\right)\chi(M)
\]
provided the coefficient of $\chi(M)$ is positive.
\end{theorem}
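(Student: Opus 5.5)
The plan is to start from the Gauss--Bonnet--Chern and Hirzebruch signature formulas for a closed oriented four-manifold:
\[
8\pi^2\chi(M)=\int_M\Big(|W^+|^2+|W^-|^2+\frac{R^2}{24}-\frac12|\rd|^2\Big)dv,\qquad
12\pi^2\tau(M)=\int_M\big(|W^+|^2-|W^-|^2\big)dv .
\]
Combining them gives
\[
8\pi^2\Big(\tfrac23\chi(M)\mp\tau(M)\Big)=\int_M\Big(\tfrac43|W^\mp|^2+\tfrac{R^2}{36}-\tfrac13|\rd|^2\Big)dv .
\]
So it suffices to bound $\int_M|\rd|^2dv$ from above by a definite fraction of the remaining curvature terms, with a quantitative gain coming from non-triviality. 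Following the strategy of \cite{CaJW}, I would use the soliton identity
\[
\int_M|\rd|^2dv=\int_M\Big(\frac{R^2}{4}\cdot\text{-type terms}\Big)dv,
\]
more precisely
\[
\int_M|Ric|^2dv=\lambda\int_MR\,dv,\qquad \int_M R\,dv=4\lambda\,\mathrm{Vol}(M).
\]
These follow from tracing \eqref{Eq1} and integrating $\Delta R$ and $\langle\nabla R,\nabla f\rangle$ on a closed shrinker. Together with $|\rd|^2=|Ric|^2-\frac{R^2}{4}$, they give
\[
\int_M|\rd|^2dv=\lambda\int_MR\,dv-\frac14\int_MR^2dv .
\]
Non-triviality gives the strict inequality $\int R^2>\frac{(\int R)^2}{\mathrm{Vol}}$, since $R$ is nonconstant on a non-trivial closed shrinker. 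This is the source of the strict inequality in the conclusion.

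The second ingredient controls $|W^\pm|^2$ through the biorthogonal curvature. In dimension four, $K^\perp(P)=\frac{R}{12}+\frac12\big(W^+(\omega^+,\omega^+)+W^-(\omega^-,\omega^-)\big)$ for the self-dual and anti-self-dual parts of the unit two-form of $P$. Hence $\varepsilon\lambda\le K^\perp\le\sigma\lambda$ bounds the eigenvalues of $W^\pm$ in terms of $\sigma\lambda-\frac R{12}$ and $\frac R{12}-\varepsilon\lambda$, and bounds $R$ itself via $R\ge 12\varepsilon\lambda$ and $R\le12\sigma\lambda$. These give pointwise inequalities of the form
\[
|W^\pm|^2\le \tfrac32\big(\max\{\sigma\lambda-\tfrac{R}{12},\tfrac{R}{12}-\varepsilon\lambda\}\big)^2\cdot 4 .
\]
I would also use the Cao--Tran Weitzenb\"ock formula for $W^\pm$ on shrinkers \cite{CaTr16}, integrated, together with hypothesis \eqref{dWpin} and the algebraic estimate $\det W^\pm\le\frac{\sqrt6}{18}|W^\pm|^3$ bounded via the eigenvalue bounds above (as in Lemma-type estimates used for Theorem \ref{Rigid2}). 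The aim is an integral inequality controlling $\int\langle Ric\circ Ric,W^\pm\rangle$ or $\int|W^\pm|^2$ against $\int R|W^\pm|^2$. The constant $\frac16$ in \eqref{dWpin} is chosen exactly so that the gradient terms can be discarded, leaving an upper bound on $\int|\rd|^2$ in terms of $\int R^2$ and $\lambda\int R$.

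The final step is pure bookkeeping. Insert the bounds into the combined Gauss--Bonnet/signature expression and express everything in terms of $\lambda^2\mathrm{Vol}(M)$ and $8\pi^2\chi(M)$. Using $12\varepsilon\lambda\le R\le12\sigma\lambda$, $\sigma\le\frac12$ (which ensures $R\le 6\lambda$, needed for the sign of $(1-2\sigma)$), and the strict Cauchy--Schwarz gap, derive
\[
\tfrac23\chi\mp\tau>\tfrac{22}{27}\cdot\frac{1-2\sigma}{12\varepsilon^2-8\varepsilon+2\sigma+1}\chi .
\]
The main obstacle I expect is this last optimization. One has to find the right linear combination of the soliton identity, the Weyl estimate and the pinching bounds so that the quadratic expression $12\varepsilon^2-8\varepsilon+2\sigma+1$ emerges as the relevant upper bound for $8\pi^2\chi/(\lambda^2\mathrm{Vol})$. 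I would first try bounding $8\pi^2\chi(M)$ from above using $|W^\pm|^2$ controlled by $K^\perp$ and $R^2\le 12\sigma\lambda R$, and bounding $\frac23\chi\mp\tau$ from below using $\int|\rd|^2\le\lambda\int R-\frac14\int R^2$ with $R\ge12\varepsilon\lambda$. Then take the ratio.
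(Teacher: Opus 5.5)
Your central soliton identity is wrong, and it reverses the sign bookkeeping. Integrating $\Delta_f R=2\lambda R-2|Ric|^2$ against $dv$ does not kill the left-hand side: $\int_M\Delta_f R\,dv=\int_M R\,\Delta f\,dv=4\lambda\int_M R\,dv-\int_M R^2\,dv$. So $\int_M|Ric|^2dv=\lambda\int_M R\,dv$ holds only for the weighted measure $e^{-f}dv$, which cannot be paired with Gauss--Bonnet--Chern. The correct unweighted identity is $\int_M|\rd|^2dv=\frac14\int_M R^2dv-4\lambda^2\mathrm{Vol}(M)$. Your version, combined with the strict Cauchy--Schwarz inequality you invoke, would force $\int_M|\rd|^2dv<0$. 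With the correct identity, $8\pi^2(\frac23\chi\mp\tau)=\frac43\int_M|W^\mp|^2dv-\frac1{18}\int_M R^2dv+\frac43\lambda^2\mathrm{Vol}(M)$. The lower bound therefore needs an \emph{upper} bound on $\int_M R^2$, which comes from $R\le 12\sigma\lambda$, not from $R\ge12\varepsilon\lambda$; Cauchy--Schwarz is not where strictness comes from.

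Your pointwise Weyl estimate $|W^\pm|^2\le 6m^2$, with $m$ your maximum, is also false as stated. If $W^\mp=0$ and $R=6(\varepsilon+\sigma)\lambda$, eigenvalues $(-2m,0,2m)$ for $W^\pm$ satisfy the pinching yet give $|W^\pm|^2=8m^2$. Bounding $W^+$ and $W^-$ separately also cannot produce the polynomial $12\varepsilon^2-8\varepsilon+2\sigma+1$. That polynomial comes from the joint bound $|W^+|+|W^-|\le\sqrt6(\frac R6-2\varepsilon\lambda)$ of Lemma \ref{detWup2}, which uses only $K^\perp\ge\varepsilon\lambda$. Insert it into $8\pi^2\chi=\int_M(|W^+|^2+|W^-|^2)dv-\frac1{12}\int_M R^2dv+2\lambda^2\mathrm{Vol}(M)$ and use $\int_M R\,dv=4\lambda\mathrm{Vol}(M)$ and $R\le12\sigma\lambda$. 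This gives $8\pi^2\chi\le2(12\varepsilon^2-8\varepsilon+2\sigma+1)\lambda^2\mathrm{Vol}(M)$.

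The more serious gap is that the ingredient producing the constant $\frac{22}{27}$ and the strict inequality is missing. Even after the repairs above, dropping $\int_M|W^\pm|^2dv\ge0$ in
\[
2\chi\pm3\tau=\frac1{2\pi^2}\int_M|W^\pm|^2dv-\frac1{48\pi^2}\int_M R^2dv+\frac{\lambda^2}{2\pi^2}\mathrm{Vol}(M)
\]
gives only $2\chi\pm3\tau\ge\frac{1-2\sigma}{2\pi^2}\lambda^2\mathrm{Vol}(M)$. That leads to $|\tau|\le\big(\frac23-\frac{18}{27}\cdot\frac{1-2\sigma}{12\varepsilon^2-8\varepsilon+2\sigma+1}\big)\chi$, which is weaker than the claim.

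The paper instead invokes Theorem 4.1 of \cite{CaTr16}. Under exactly hypothesis \eqref{dWpin}, which is the only place the constant $\frac16$ is used, it gives $\int_M|W^\pm|^2dv>\frac4{11}\pi^2(2\chi\pm3\tau)$. This upgrades the inequality to $\frac9{11}(2\chi\pm3\tau)>\frac{1-2\sigma}{2\pi^2}\lambda^2\mathrm{Vol}(M)$. It improves $\frac{18}{27}$ to $\frac{11}{9}\cdot\frac{18}{27}=\frac{22}{27}$, with strict inequality.

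Your Weitzenb\"ock step cannot play this role as you describe it. An upper bound on $\int_M|\rd|^2$ is pointless, since on a closed shrinker it is already fixed by $\int_M R^2$ and the volume. What is needed is a lower bound on $\int_M|W^\pm|^2$ in terms of $2\chi\pm3\tau$, and nothing in your outline produces one.
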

\begin{remark}
Since the scalar curvature $R\ge 0$ (\cite{[Chen]}) and its relation to the biorthogonal curvature in \eqref{scalarandKperp} below,  the lower bound assumption on the biorthogonal curvature is automatically satisfied on closed four-dimensional shrinkers. We impose this lower bound assumption to potentially derive a sharp Hitchin–Thorpe type inequality.
In particular, only letting 
$\sigma=1/2$, Theorem \ref{HT} gives 
\[
|\tau(M)|<\frac23\chi(M).
\]	
\end{remark}

The proof of Theorem \ref{HT} modifies the proof trick of \cite{CaJW},
by combining the Gauss-Bonnet-Chern formula, the classical Hirzebrush's theorem
and the $L^2$-integral of half Weyl curvature relating with topological invariants
\cite{CaTr16}. In our proof, the bounds imposed on the biorthogonal curvature yield
uniform control over both the scalar curvature and the sum of the norms of the
self-dual and anti-self-dual Weyl curvatures. Assembling all these algebraic and
integral estimates together, we arrive at the desired topological inequality.

The paper is organized as follows. In Section \ref{pre}, we review curvature
decompositions and algebraic curvature inequalities for four-dimensional manifolds.
We also review some basic equations and properties about shrinkers. In Section
\ref{prorig1}, we study some algebraic curvature estimates under the pinching
condition on the $2$nd-Ricci curvature. Then we apply them to prove Theorem \ref{Rig1}.
In Section \ref{prorig2}, we investigate the relationship between lower and upper
bounds of biorthogonal curvature and prove Theorem \ref{Rigid2}. In Section
\ref{discuss}, we adapt the argument developed for Theorem \ref{Rigid2} to
prove Theorem \ref{Rigid3}. Finally, Section \ref{HTI} contains the proof of Theorem
\ref{HT}.

\section{Preliminaries}\label{pre}
In this section, we recall several known curvature decomposition results, the biorthogonal curvature, and their algebraic curvature estimates for four-dimensional manifolds. We also recall some basic equations and propositions concerning shrinkers. These results will be useful in the proofs of our theorems.

\subsection{Curvature decomposition}
Let $(M,g)$ be an oriented complete $n$-dimensional ($n\ge3$) Riemannian manifold.
The Riemannian curvature can be regarded as an adjoint operator on the vector
bundle of 2-forms, denoted by $\wedge^2 TM$. Owing to the algebraic structure of this
vector bundle, the Riemannian curvature $Rm$ admits the decomposition
\begin{equation}\label{curde}
Rm=W+\frac{R}{2n(n-1)}g\circ g+\frac{1}{n-2}\left(Ric-\frac{R}{n}Id\right)\circ g,
\end{equation}
where $Ric$, $R$, $W$ denote the Ricci curvature, the scalar curvature and the
Weyl curvature, respectively. For symmetric $2$-tensors $A$ and $B$, their
Kulkarni-Nomizu product $\circ$ is defined by
\[
(A\circ B) (u, v, z, w):= A(u,z)B(v,w)+B(u,z)A(v,w)-A(u,w)B(v,z)-B(u,w)A(v,z)
\]
for any vector fields $u, v, z, w\in TM$.

On an oriented four-dimensional manifold $(M,g)$, the Hodge star operator
$*:\wedge^2TM\to \wedge^2TM$ yields a natural decomposition of the vector
bundle of 2-forms $\wedge^2 TM$:
\begin{equation*}
\wedge^2 TM =\wedge^+ M \oplus \wedge^- M,
\end{equation*}
where $\wedge^{\pm} M$ denote the eigenspaces corresponding to eigenvalues $\pm 1$
of the Hodge star operator $*$, respectively. Elements of $\wedge^+ M$ and $\wedge^- M$
are called self-dual and anti-self-dual 2-forms, respectively. Since the Riemannian
curvature can be viewed as an operator acting on the space of 2-forms
$Rm: \wedge^2 TM\to \wedge^2 TM$, the Hodge star induces the block decomposition
\begin{equation*}
Rm=\left( \begin{array}{cc}
{W^+}+\frac{R}{12}Id & \frac{1}{2}(Ric-\frac{R}{4}Id)\circ g \\
\frac{1}{2}(Ric-\frac{R}{4}Id)\circ g & {W^-}+ \frac{R}{12}Id
\end{array} \right),
\end{equation*}
where the self-dual and anti-self-dual Weyl curvatures $W^{\pm}$ (also known as
 half Weyl curvatures) are the restriction of $W$ to self-dual and anti-self-dual
2-forms $\wedge^{\pm}M$, respectively.

Since the Weyl curvature $W$ is traceless and obeys the first Bianchi identity,
a beautiful decomposition of $W$ is available due to Berger \cite{ber}.

\begin{proposition}\label{berger}
Let $(M,g)$ be a four-dimensional manifold. At each point $x\in M$, there exists
an orthonormal basis $\{e_i\}_{1\le i\le 4}$ of $T_x M$ such that $W$ takes the form
\begin{equation*}
W=\left( \begin{array}{cc}
A & B\\
B & A
\end{array}\right),
\end{equation*}
with respect to the associated basis $\{e_i\wedge e_j\}_{1\le i<j\le 4}$
of $\wedge^2 T_xM$), where $A=\mathrm{Diag}\{a_1,a_2,a_3\}$, $B=\mathrm{Diag}\{b_1,b_2,b_3\}$.
Moreover, the following properties hold:
\begin{enumerate}
\item $a_1=W(e_1,e_2,e_1,e_2)=W(e_3,e_4,e_3,e_4)=\min_{|a|=|b|=1,~ a\perp b}W(a,b,a,b)$,
\item $a_3=W(e_1,e_4,e_1,e_4)=W(e_1,e_4,e_1,e_4)=\max_{|a|=|b|=1,~ a\perp b}W(a,b,a,b)$,
\item $a_2=W(e_1,e_3,e_1,e_3)=W(e_2,e_4,e_2,e_4)$,
\item $b_1=W_{1234}, b_2=W_{1342}, b_3=W_{1423}$,
\item $a_1+a_2+a_3=b_1+b_2+b_3=0$,
\item $|b_2-b_1|\le a_2-a_1, |b_3-b_1|\le a_3-a_1, |b_3-b_2|\le a_3-a_2$.
\end{enumerate}
\end{proposition}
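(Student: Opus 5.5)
The approach is to view $W$ at a point $x$ as a symmetric endomorphism of $\wedge^2T_xM$ that commutes with the Hodge star, and to diagonalize it by a suitable orthonormal frame. Since $W$ preserves $\wedge^\pm M$, we write $W=W^+\oplus W^-$. Each $W^\pm$ is a symmetric, traceless endomorphism of a three-dimensional space, because $W$ has vanishing Ricci contraction and the first Bianchi identity gives $\mathrm{tr}\,W^\pm=0$. Hence $W^+$ and $W^-$ have eigenvalues $\lambda_1^\pm\le\lambda_2^\pm\le\lambda_3^\pm$ with $\sum_i\lambda_i^\pm=0$.

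\textbf{Choice of frame.} The key algebraic fact is that any pair of orthonormal bases of $\wedge^+$ and $\wedge^-$ comes from an oriented orthonormal frame of $T_xM$. For an oriented frame $\{e_i\}$, set
\[
\omega_1^\pm=\tfrac{1}{\sqrt2}(e_1\wedge e_2\pm e_3\wedge e_4),\quad
\omega_2^\pm=\tfrac{1}{\sqrt2}(e_1\wedge e_3\pm e_4\wedge e_2),\quad
\omega_3^\pm=\tfrac{1}{\sqrt2}(e_1\wedge e_4\pm e_2\wedge e_3).
\]
Under the double cover $SO(4)\to SO(3)\times SO(3)$, the pairs $(\{\omega_i^+\},\{\omega_i^-\})$ can be prescribed independently, up to orientation of each triple, which is fixed by reordering signs. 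So we choose $\{e_i\}$ such that $\omega_i^\pm$ are eigenvectors of $W^\pm$ with eigenvalues $\lambda_i^\pm$, ordered increasingly. Expressing $W$ in the basis $e_1\wedge e_2,e_1\wedge e_3,e_1\wedge e_4,e_3\wedge e_4,e_4\wedge e_2,e_2\wedge e_3$ then gives exactly the block form $\begin{pmatrix}A&B\\B&A\end{pmatrix}$ with
\[
a_i=\tfrac12(\lambda_i^++\lambda_i^-),\qquad b_i=\tfrac12(\lambda_i^+-\lambda_i^-).
\]
This yields items (3) and (4), modulo the paper's index conventions for $b_2=W_{1342}$ and $b_3=W_{1423}$. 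Item (5) follows from the vanishing of $\mathrm{tr}\,W^\pm$.

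\textbf{Inequalities (6).} We compute $a_j-a_i\pm(b_j-b_i)$, which equals $\lambda_j^+-\lambda_i^+$ for one choice of sign and $\lambda_j^--\lambda_i^-$ for the other. For $i<j$ both quantities are $\ge0$, and this is equivalent to $|b_j-b_i|\le a_j-a_i$.

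\textbf{Extremal characterization, items (1) and (2).} This is the main step. For unit orthogonal $a,b$, the decomposable unit 2-form $a\wedge b=\alpha^++\alpha^-$ satisfies $|\alpha^+|^2=|\alpha^-|^2=\tfrac12$. Conversely, every such pair arises from some decomposable form, since decomposability is equivalent to $\omega\wedge\omega=0$, i.e.\ $|\alpha^+|=|\alpha^-|$. Then
\[
W(a,b,a,b)=\langle W^+\alpha^+,\alpha^+\rangle+\langle W^-\alpha^-,\alpha^-\rangle .
\]
The two terms can be minimized independently over spheres of radius $1/\sqrt2$, giving the minimum $\tfrac12(\lambda_1^++\lambda_1^-)=a_1$, attained at $e_1\wedge e_2$. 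Similarly the maximum is $a_3$, attained at $e_1\wedge e_4$. The equalities $a_1=W_{1212}=W_{3434}$ and so on follow from the diagonal $A$-blocks being equal.

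The hardest part is bookkeeping the orientation and sign conventions, so that the chosen frame makes $\omega_i^\pm$ simultaneously the ordered eigenbases with the stated index pattern for the $b_i$. The extremal characterization is then clean.
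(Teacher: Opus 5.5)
Your proposal is correct. The paper gives no proof of this proposition; it quotes it from Berger \cite{ber}, so there is no internal argument to match against. Your argument is a standard self-contained derivation, and each step is sound:
\begin{itemize}
\item split $W=W^+\oplus W^-$ into traceless symmetric pieces;
\item use surjectivity of $SO(4)\to SO(\wedge^+)\times SO(\wedge^-)$ to realize ordered eigenbases of $W^+$ and $W^-$ by one oriented frame, flipping the sign of one eigenvector to fix orientation;
\item read off $a_i=\tfrac12(\lambda_i^++\lambda_i^-)$ and $b_i=\tfrac12(\lambda_i^+-\lambda_i^-)$.
\end{itemize}
In this form, (5) is $\mathrm{tr}\,W^\pm=0$, and (6) is just $\lambda_i^\pm\le\lambda_j^\pm$ for $i<j$. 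Items (1)--(2) follow from $|\alpha^\pm|^2=\tfrac12$ for any unit decomposable $a\wedge b$.

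The converse you invoke (every pair with $|\alpha^+|=|\alpha^-|$ comes from some $a\wedge b$) is not needed. The lower and upper bounds hold for every $a\wedge b$, and attainment is explicit at $e_1\wedge e_2$ and $e_1\wedge e_4$. Your formulation is also the one the paper actually uses later, through the ordered eigenvalues in \eqref{eigen}. A classical alternative is variational: choose a plane extremizing $W(a,b,a,b)$ and extract the vanishing of off-diagonal entries from critical-point conditions. Your route avoids that bookkeeping and gives (6) essentially for free.

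Two small remarks.
\begin{enumerate}
\item Your hedge on (4) is unnecessary. Use the ordered basis $e_{12},e_{13},e_{14},e_{34},e_{42},e_{23}$ with the convention $W_{ijkl}=\langle W(e_i\wedge e_j),e_k\wedge e_l\rangle$, which is the one making $a_1=W_{1212}$ sectional. Then the diagonal of $B$ is exactly $W_{1234},W_{1342},W_{1423}$.
\item That reordered basis, with $e_{42}=-e_{24}$, is what the block form actually requires. In the lexicographic basis $\{e_i\wedge e_j\}_{i<j}$ written in the statement, the matrix is not literally $\bigl(\begin{smallmatrix}A&B\\ B&A\end{smallmatrix}\bigr)$ with diagonal blocks, so your version is the precise one. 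Similarly, item (2) in the statement repeats $W(e_1,e_4,e_1,e_4)$ where $W(e_2,e_3,e_2,e_3)$ is meant. Your observation that the two diagonal $A$-blocks coincide gives exactly that.
\end{enumerate}
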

Furthermore, we may construct orthonormal bases for $\wedge^{\pm} M$ as
\[
(w^{\pm}_1,w^{\pm}_2,w^{\pm}_3)=\frac{1}{\sqrt{2}}(e_{12}\pm e_{34}, e_{13}\mp e_{24}, e_{14}\pm e_{23}),
\]
where $e_{ij}:=e_i\wedge e_j$. Consequently, the ordered eigenvalues on
$\wedge^{\pm} M$ are given by
\begin{equation}\label{eigen}
\lambda_1^{\pm}=a_1\pm b_1\le\lambda_2^{\pm}=a_2\pm b_2\le\lambda_3^{\pm}=a_3\pm b_3.
\end{equation}
On a four-dimensional manifold $(M,g)$, recall the definition of biorthogonal curvature.
Suppose $P$ and $P^\perp$ are spanned by orthonomal pairs $\{e_1, e_2\}$ and $\{e_3, e_4\}$,
respectively. Then, in view of \eqref{curde}, we obtain
\begin{equation}
\begin{aligned}\label{orthogonalsec}
K^\perp(P)=\frac{1}{2}\big(Rm(e_1, e_2, e_1, e_2)+Rm(e_3, e_4, e_3, e_4)\big)
=W_{1212}+\frac{R}{12}.
\end{aligned}
\end{equation}
For three planes $P_1, P_2, P_3$ spanned by $\{e_1, e_2\}$, $\{e_1, e_3\}$,
$\{e_1, e_4\}$, respectively, we have
\begin{equation}\label{scalarandKperp}
K^\perp(P_1)+K^\perp(P_2)+K^\perp(P_3)=\frac{R}{4}.
\end{equation}

Using Proposition \ref{berger} together with purely algebraic computations,
the following two lemmas are established in \cite{CaTr22}. These estimates
will be used in the proof of Theorem \ref{Rigid2}.
\begin{lemma}\label{ineqlem}
Let $(M,g)$ be a four-dimensional manifold. Then
\begin{align*}
&\lambda_3^{\pm}\le\lambda_3^{+}+\lambda_3^{-}\le 2K^\perp_{max}-\frac{R}{6},\\
&|\lambda_3^+-\lambda_3^-|\le 2\left(K^\perp_{max}-\frac{R}{12}\right).
\end{align*}
Moreover, equality holds in both inequalities above if and only if $|W^-||W^+|=0$.
\end{lemma}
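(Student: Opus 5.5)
The plan is to reduce both inequalities to two facts: the identity $K^\perp_{max}-\frac{R}{12}=\frac12(\lambda_3^++\lambda_3^-)$, and the nonnegativity of the top eigenvalues $\lambda_3^\pm$ of the traceless operators $W^\pm$. Once these are in place, both inequalities and the equality cases follow by elementary manipulation.

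\emph{Step 1: the identity.} By \eqref{orthogonalsec}, $K^\perp(P)=W(e_1,e_2,e_1,e_2)+\frac{R}{12}$ for $P=\mathrm{span}\{e_1,e_2\}$. It therefore suffices to compute $\max_P W(e_1,e_2,e_1,e_2)$, where $W$ is viewed as a quadratic form on unit simple $2$-forms $e_1\wedge e_2$. I will use the standard description of the Grassmannian of oriented planes in dimension four. A unit $2$-form $\eta$ is simple iff $\eta\wedge\eta=0$. Writing $\eta=\frac{1}{\sqrt2}(\omega^++\omega^-)$ with $\omega^\pm\in\wedge^\pm$, this condition is equivalent to $|\omega^+|=|\omega^-|=1$. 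Conversely, every pair of unit vectors $\omega^\pm\in\wedge^\pm$ arises in this way. For example, $e_{12}=\frac{1}{\sqrt2}(w_1^++w_1^-)$ in the basis following Proposition \ref{berger}, and $SO(4)$ acts transitively on $S^2\times S^2$ through $SO(3)\times SO(3)$.

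Since $W$ preserves $\wedge^+$ and $\wedge^-$, the cross terms vanish and we get
\[
W(e_1,e_2,e_1,e_2)=\langle W\eta,\eta\rangle=\tfrac12\big(\langle W^+\omega^+,\omega^+\rangle+\langle W^-\omega^-,\omega^-\rangle\big).
\]
Maximizing independently over $\omega^+$ and $\omega^-$ gives $\max_P W(P)=\frac12(\lambda_3^++\lambda_3^-)$. Hence
\[
\lambda_3^++\lambda_3^-=2K^\perp_{max}-\frac{R}{6}.
\]
So the middle inequality of the first line in fact holds with equality. This is consistent with Proposition \ref{berger}(2) and \eqref{eigen}, which give $\lambda_3^++\lambda_3^-=2a_3$ with $a_3=\max W(a,b,a,b)$.

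\emph{Step 2: nonnegativity.} Since $\mathrm{tr}\,W^\pm=0$, the largest eigenvalue satisfies $\lambda_3^\pm\ge 0$. Equality $\lambda_3^\pm=0$ holds iff all three eigenvalues vanish, i.e.\ iff $W^\pm=0$.

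\emph{Step 3: conclusion.} From Step 2, $\lambda_3^\pm\le\lambda_3^++\lambda_3^-$, with equality iff $\lambda_3^\mp=0$, i.e.\ iff $W^\mp=0$. Combined with Step 1, this gives the first line. For the second line, since both numbers are nonnegative,
\[
|\lambda_3^+-\lambda_3^-|\le\lambda_3^++\lambda_3^-=2\Big(K^\perp_{max}-\frac{R}{12}\Big),
\]
with equality iff $\min(\lambda_3^+,\lambda_3^-)=0$, i.e.\ iff $W^+=0$ or $W^-=0$, i.e.\ iff $|W^+||W^-|=0$. The equality statement for the first line should be read in the same way: the chain is sharp, that is, $\lambda_3^\pm$ equals the right-hand side, exactly when one half of the Weyl tensor vanishes.

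The main obstacle is Step 1, namely making the correspondence between oriented planes and pairs $(\omega^+,\omega^-)$ of unit self-dual and anti-self-dual forms rigorous, including surjectivity. If needed, I would bypass it by quoting Proposition \ref{berger} directly. That proposition gives $a_3=\max W(a,b,a,b)$ together with the eigenvalue list \eqref{eigen}. The only remaining point would be to check that $a_3\pm b_3$ are indeed the top eigenvalues. This follows from item (6), for instance $b_2-b_3\le a_3-a_2$, which gives $a_2+b_2\le a_3+b_3$.
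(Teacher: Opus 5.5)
Your proof is correct, but it takes a different route from the one the paper relies on. The paper gives no argument of its own. It attributes the lemma to \cite{CaTr22}, where it is extracted from Berger's normal form (Proposition \ref{berger}) by algebraic manipulation of the entries $a_i,b_i$.

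You instead parametrize unit simple $2$-forms as $\frac{1}{\sqrt2}(\omega^++\omega^-)$ with $(\omega^+,\omega^-)\in S^2\times S^2$, and obtain the identity $\lambda_3^++\lambda_3^-=2K^\perp_{max}-\frac{R}{6}$. This is the maximum counterpart of the identity $K^\perp_{min}-\frac{R}{12}=\frac12(\lambda_1^++\lambda_1^-)$ that the paper quotes from \cite{CoRi} just before Lemma \ref{detWup2}.

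Your route buys two things. First, the argument is self-contained and coordinate-free. Everything reduces to $\lambda_3^\pm\ge 0$, which follows from $\mathrm{tr}\,W^\pm=0$ (item (5) of Proposition \ref{berger} in the paper's notation). Second, it shows that the middle inequality of the first line is never strict in this $K^\perp$ formulation. The equality clause of the statement can therefore only concern two things: $\lambda_3^\pm\le\lambda_3^++\lambda_3^-$, which is sharp iff $W^\mp=0$, and the second line, which is sharp iff $\min(\lambda_3^+,\lambda_3^-)=0$. Your reading of that somewhat ambiguous clause is therefore the right one.

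Your fallback via Proposition \ref{berger}(2), (6) and \eqref{eigen} is also valid, and is essentially the cited route.
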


\begin{lemma}\label{detWup}
Let $(M,g)$ be a four-dimensional manifold. Then
\[
6\,\mathrm{det}\,W^{\pm}\le\lambda^{\pm}_3|W^\pm|^2.
\]
\end{lemma}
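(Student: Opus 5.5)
The plan is to reduce the claim to an elementary inequality for the eigenvalues of $W^{\pm}$, viewed as a traceless symmetric endomorphism of the three-dimensional bundle $\wedge^{\pm}M$. Fix a point and one choice of sign. By \eqref{eigen} and item (5) of Proposition \ref{berger}, $W^{\pm}$ is diagonalized in the basis $w^{\pm}_1,w^{\pm}_2,w^{\pm}_3$. Its eigenvalues satisfy
\[
\lambda_1^{\pm}\le\lambda_2^{\pm}\le\lambda_3^{\pm},\qquad \lambda_1^{\pm}+\lambda_2^{\pm}+\lambda_3^{\pm}=0,
\]
and so $\det W^{\pm}=\lambda_1^{\pm}\lambda_2^{\pm}\lambda_3^{\pm}$ and $|W^{\pm}|^2=\sum_i(\lambda_i^{\pm})^2$. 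If the tensor-norm convention instead makes $|W^{\pm}|^2$ a positive multiple $c\ge1$ of $\sum_i(\lambda_i^{\pm})^2$, the right-hand side only grows once we know $\lambda_3^{\pm}\ge0$. So it suffices to prove $6\lambda_1\lambda_2\lambda_3\le\lambda_3(\lambda_1^2+\lambda_2^2+\lambda_3^2)$, dropping the sign superscript.

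First I would dispose of the degenerate case. Since the eigenvalues sum to zero and $\lambda_3$ is the largest, $\lambda_3\ge0$. If $\lambda_3=0$, then $\lambda_1+\lambda_2=0$ with $\lambda_1,\lambda_2\le0$. This forces $W^{\pm}=0$ at that point, and both sides vanish.

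For $\lambda_3>0$, I would divide by $\lambda_3$ and substitute $\lambda_3=-(\lambda_1+\lambda_2)$. The inequality becomes
\[
6\lambda_1\lambda_2\le \lambda_1^2+\lambda_2^2+(\lambda_1+\lambda_2)^2=2\lambda_1^2+2\lambda_2^2+2\lambda_1\lambda_2,
\]
which is equivalent to $0\le 2(\lambda_1-\lambda_2)^2$. This is trivially true, with equality exactly when $\lambda_1=\lambda_2$.

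There is no real obstacle here. The only points needing care are the sign of $\lambda_3$, which justifies dividing by it, and the norm convention for $|W^{\pm}|^2$; both are handled above.
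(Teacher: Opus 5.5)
Your argument is correct. Since $\lambda_3^{\pm}\ge 0$, and $|W^{\pm}|^2-6\lambda_1^{\pm}\lambda_2^{\pm}=2(\lambda_1^{\pm}-\lambda_2^{\pm})^2$ once you substitute $\lambda_3^{\pm}=-\lambda_1^{\pm}-\lambda_2^{\pm}$, the inequality follows at once; the paper uses the convention $|W^{\pm}|^2=\sum_i(\lambda_i^{\pm})^2$, so your norm-convention hedge is not needed. The paper gives no proof of its own and only cites Cao--Tran, who obtain the lemma by an elementary algebraic computation on the eigenvalues from Berger's normal form, which is the same route you take.
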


On the other hand, from $\lambda_1^{\pm}\le\lambda_2^{\pm}\le\lambda_3^{\pm}$
and $\lambda_1^{\pm}+\lambda_2^{\pm}+\lambda_3^{\pm}=0$, we claim that
$|W^{\pm}|^2\le 6(\lambda_1^{\pm})^2$. Indeed,
\begin{align*}
|W^{\pm}|^2&=\lambda_1^{\pm}+\lambda_2^{\pm}+\lambda_3^{\pm}\\
&=2(\lambda_1^{\pm})^2-2\lambda_2^{\pm}\lambda_3^{\pm}\\
&\le2(\lambda_1^{\pm})^2-2\lambda_1^{\pm}\lambda_3^{\pm}\\
&=2(\lambda_1^{\pm})^2-2\lambda_1^{\pm}(-\lambda_1^{\pm}-\lambda_2^{\pm})\\
&=4(\lambda_1^{\pm})^2+2\lambda_1^{\pm}\lambda_2^{\pm}\\
&\le6(\lambda_1^{\pm})^2
\end{align*}
and the claim follows. Combining this claim with the identity
$K^\perp_{min}-\frac{R}{12}=\frac{\lambda^{+}_1+\lambda^{-}_1}{2}$
(see, for instance, identity (2.5) in \cite{CoRi}),  we obtain
\begin{lemma}\label{detWup2}
Let $(M,g)$ be a four-dimensional manifold. Then
\[
\frac{1}{\sqrt{6}}\left(|W^+|+|W^-|\right)\le\frac{R}{6}-2K^\perp_{min}.
\]
\end{lemma}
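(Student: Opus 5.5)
The plan is to combine the pointwise bound $|W^{\pm}|^2\le 6(\lambda_1^{\pm})^2$ established just above with the identity $K^\perp_{min}-\frac{R}{12}=\frac{\lambda_1^++\lambda_1^-}{2}$. Fix a point $x\in M$ and work with the ordered eigenvalues $\lambda_1^{\pm}\le\lambda_2^{\pm}\le\lambda_3^{\pm}$ of $W^{\pm}$ from \eqref{eigen}. Since these are traceless, $\lambda_1^{\pm}\le 0$.

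First, I will redo the claim with the correct squares. The chain begins with $|W^{\pm}|^2=(\lambda_1^{\pm})^2+(\lambda_2^{\pm})^2+(\lambda_3^{\pm})^2$. Using $(\lambda_1^{\pm})^2=(\lambda_2^{\pm}+\lambda_3^{\pm})^2$, this becomes $2(\lambda_1^{\pm})^2-2\lambda_2^{\pm}\lambda_3^{\pm}$. The remaining steps are those displayed above, using $\lambda_1^{\pm}\le\lambda_2^{\pm}$, $\lambda_3^{\pm}\ge 0$ and $\lambda_1^{\pm}\le 0$. The conclusion is $|W^{\pm}|\le\sqrt6\,|\lambda_1^{\pm}|=-\sqrt6\,\lambda_1^{\pm}$. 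Adding the two signs gives
\[
\frac{1}{\sqrt6}\left(|W^+|+|W^-|\right)\le-(\lambda_1^++\lambda_1^-).
\]

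Second, I will justify the identity $K^\perp_{min}-\frac{R}{12}=\frac{\lambda_1^++\lambda_1^-}{2}$; it is cited from \cite{CoRi}, but it is quick to check. By \eqref{orthogonalsec}, $K^\perp(P)=W(\omega,\omega)+\frac{R}{12}$, where $\omega=e_1\wedge e_2$ is the unit decomposable $2$-form of $P$. Write $\omega=\frac{1}{\sqrt2}(\alpha+\beta)$ with $\alpha\in\wedge^+$ and $\beta\in\wedge^-$. Since $\omega\wedge\omega=0$ we get $|\alpha|=|\beta|=1$. Because $W$ preserves the splitting $\wedge^+\oplus\wedge^-$,
\[
W(\omega,\omega)=\tfrac12\left(W^+(\alpha,\alpha)+W^-(\beta,\beta)\right)\ge\tfrac12(\lambda_1^++\lambda_1^-).
\]
Conversely, any unit $\alpha\in\wedge^+$ and unit $\beta\in\wedge^-$ give a decomposable unit form $\frac{1}{\sqrt2}(\alpha+\beta)$. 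Choosing $\alpha$ and $\beta$ to be the bottom eigenvectors, for instance $w_1^\pm$ in Berger's basis, which gives $e_1\wedge e_2$, attains equality.

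Finally, substituting $-(\lambda_1^++\lambda_1^-)=-2\left(K^\perp_{min}-\frac{R}{12}\right)=\frac R6-2K^\perp_{min}$ into the first display gives the lemma. The only point needing care is the second step, namely checking that the minimum over planes equals the minimum over pairs of unit forms in $\wedge^{\pm}$. The correspondence between decomposable unit $2$-forms and such pairs settles this, so I expect no real obstacle.
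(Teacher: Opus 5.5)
Your proposal is correct and follows the paper's argument. Both proofs combine the pointwise bound $|W^{\pm}|^2\le 6(\lambda_1^{\pm})^2$ (you rightly restore the squares missing from the paper's first line) with the identity $K^\perp_{min}-\frac{R}{12}=\frac{\lambda_1^++\lambda_1^-}{2}$. The only difference is that the paper cites this identity, while you verify it directly: you split unit decomposable $2$-forms as $\frac{1}{\sqrt2}(\alpha+\beta)$ with unit $\alpha\in\wedge^+$ and unit $\beta\in\wedge^-$, which is a valid self-contained justification.
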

\subsection{Basic facts of shrinker}
In this subsection, we review several known results for shrinkers,
with particular attention to the four-dimensional setting. From \eqref{Eq1},
we derive the following identities:
\begin{eqnarray}\label{listshr}
&R+\Delta f=n \lambda, \label{tra}\\
&\nabla_i R=2R_{ij} \nabla_j f, \label{tracric}\\
& \Delta_f R=2\lambda R-2|Ric|^2, \label{Sequat}\\
& \Delta_fR_{ij}=R_{ij}-2R_{ikjl}R_{kl}, \label{evo}
\end{eqnarray}
where $\Delta_f:=\Delta-\nabla_{\nabla f}$ is the $f$-Laplacian
and $R_{ij}$ is the Ricci curvature of $(M,g,f)$.

On a complete shrinker $(M,g,f)$, Chen \cite{[Chen]} showed that the scalar
curvature satisfies $R\ge 0$. Pigola, Rimoldi and Setti \cite{PRS} further showed
that $R>0$ unless $(M,g,f)$ is the Gaussian shrinker. Let $\rd=Ric-\frac{R}{4}g$.
Using the above equations, a direct computation yields
\begin{lemma}\label{l-eqric}
Let $(M,g,f)$ be a four-dimensional gradient Ricci soliton. Then
\[
\frac{1}{2}\Delta_f|\rd|^2=|\nabla\rd|^2+ 2\lambda |\rd|^2-2R_{ijkl}{\R}_{ik}{\R}_{jl}
-\frac{1}{2}R|\rd|^2,
\]
where $\rd$ is the traceless Ricci curvature.
\end{lemma}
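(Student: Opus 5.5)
We compute $\frac12\Delta_f|\rd|^2$ directly, using the identities \eqref{Sequat} and \eqref{evo} with $n=4$.

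\emph{Setting up.} Write $|\rd|^2=|Ric|^2-\frac{R^2}{4}$. Then
\[
\tfrac12\Delta_f|\rd|^2=\tfrac12\Delta_f|Ric|^2-\tfrac14\Delta_f R^2 .
\]
Both pieces expand by the product rule for the $f$-Laplacian, $\Delta_f(uv)=u\Delta_f v+v\Delta_f u+2\langle\nabla u,\nabla v\rangle$. This gives
\[
\tfrac12\Delta_f|Ric|^2=|\nabla Ric|^2+R_{ij}\Delta_f R_{ij}
\quad\text{and}\quad
\tfrac14\Delta_f R^2=\tfrac12|\nabla R|^2+\tfrac12R\Delta_f R .
\]
The gradient terms combine at once: since $\nabla\rd=\nabla Ric-\frac14\nabla R\, g$, we have $|\nabla\rd|^2=|\nabla Ric|^2-\frac14|\nabla R|^2$. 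So the gradient terms reduce to $|\nabla \rd|^2$ plus a leftover multiple of $|\nabla R|^2$, which must be tracked.

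\emph{Substituting.} Insert \eqref{Sequat}, i.e. $\Delta_f R=2\lambda R-2|Ric|^2$, and \eqref{evo}. Note that the linear term in \eqref{evo} should read $2\lambda R_{ij}$ for general $\lambda$; with the normalization $\lambda=\frac12$ it is $R_{ij}$. This gives
\[
R_{ij}\Delta_fR_{ij}=2\lambda|Ric|^2-2R_{ikjl}R_{ij}R_{kl}.
\]
Now rewrite the curvature term using $R_{ij}=\R_{ij}+\frac R4 g_{ij}$. Since $R_{ikjl}g_{kl}=R_{ij}$ and $R_{ikjl}g_{ij}g_{kl}=R$,
\[
R_{ikjl}R_{ij}R_{kl}=R_{ikjl}\R_{ij}\R_{kl}+\tfrac R2 R_{ij}\R_{ij}+\tfrac{R^2}{16}R
=R_{ikjl}\R_{ij}\R_{kl}+\tfrac R2|\rd|^2+\tfrac{R^3}{16}.
\]
Up to the index convention used in the statement, this is $R_{ijkl}\R_{ik}\R_{jl}$.

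\emph{Collecting terms.}
\begin{itemize}
\item The $\lambda$-terms give $2\lambda|Ric|^2-\lambda R^2=2\lambda|\rd|^2$.
\item The cubic terms give $-R|\rd|^2-\frac{R^3}{8}+\frac R2(|Ric|^2)=-R|\rd|^2-\frac{R^3}8+\frac R2|\rd|^2+\frac{R^3}{8}=-\frac12R|\rd|^2$.
\end{itemize}
This matches the statement.

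\emph{Main obstacle.} The step needing care is the leftover gradient term $-\frac12|\nabla R|^2+\frac14|\nabla R|^2=-\frac14|\nabla R|^2$. It must be absorbed, because the stated formula has no such term. I would use the contracted Bianchi identity $\nabla_iR_{ij}=\frac12\nabla_jR$ together with the way \eqref{evo} is derived: the identity $\Delta R_{ij}$ is obtained via the commutation $\nabla_k\nabla_iR_{jk}$ and $\nabla_iR=2R_{ij}\nabla_jf$ from \eqref{tracric}. So I expect either that the precise form of \eqref{evo} already accounts for this term, or that it is absorbed when one rewrites $|\nabla Ric|^2$ in terms of $|\nabla\rd|^2$.

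Concretely, I would first recheck the expansion of $\frac14\Delta_fR^2$ versus $\frac12R\Delta_fR$. I expect the correct bookkeeping is $\frac12\Delta_f|\rd|^2=\langle\rd,\Delta_f\rd\rangle+|\nabla\rd|^2$, with $\Delta_f\rd=\Delta_f Ric-\frac14\Delta_f R\,g$. Written this way no stray $|\nabla R|^2$ term appears, and I would use this formulation to finish. In this form the computation is just $\langle\rd,\Delta_fRic\rangle$, since $\rd$ is traceless and so the $\Delta_f R\, g$ part drops out. That reduces everything to the curvature substitution above, which yields the stated identity.
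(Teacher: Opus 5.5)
Your final route is correct and is essentially the paper's ``direct computation'': $\frac12\Delta_f|\rd|^2=|\nabla\rd|^2+\langle\rd,\Delta_f\rd\rangle$, and tracelessness of $\rd$ kills the $\Delta_fR\,g$ part, leaving $\langle\rd,\Delta_fRic\rangle$. You are also right that \eqref{evo} as printed is normalized to $\lambda=\frac12$. The general form $\Delta_fR_{ij}=2\lambda R_{ij}-2R_{ikjl}R_{kl}$ is what produces the $2\lambda|\rd|^2$ term. One small fix in that route: you cannot reuse your expansion of $R_{ikjl}R_{ij}R_{kl}$, because what appears is the mixed term
\[
\R_{ij}R_{ikjl}R_{kl}=R_{ikjl}\R_{ij}\R_{kl}+\tfrac R4|\rd|^2 .
\]
This gives $\langle\rd,\Delta_fRic\rangle=2\lambda|\rd|^2-2R_{ikjl}\R_{ij}\R_{kl}-\frac12R|\rd|^2$ directly, and \eqref{Sequat} is not even needed.

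The first computation, however, contains an arithmetic slip that you misread as a genuine obstacle. Since $\frac12|\rd|^2=\frac12|Ric|^2-\frac18R^2$, the correct split is $\frac12\Delta_f|\rd|^2=\frac12\Delta_f|Ric|^2-\frac18\Delta_fR^2$, not $-\frac14\Delta_fR^2$. Using $\frac18\Delta_fR^2=\frac14|\nabla R|^2+\frac14R\Delta_fR$, two things follow:
\begin{itemize}
\item the gradient terms are exactly $|\nabla Ric|^2-\frac14|\nabla R|^2=|\nabla\rd|^2$, with nothing left over;
\item the $\lambda$-terms are $2\lambda|Ric|^2-\frac12\lambda R^2=2\lambda|\rd|^2$.
\end{itemize}
As written, your bullet ``$2\lambda|Ric|^2-\lambda R^2=2\lambda|\rd|^2$'' is false. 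Your cubic bullet uses $+\frac R2|Ric|^2$, which comes from the correct coefficient $\frac18$, rather than the $+R|Ric|^2$ your own setup would give. So the collecting step is internally inconsistent.

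The proposed remedy cannot work either. You suggest absorbing $-\frac14|\nabla R|^2$ via the contracted Bianchi identity or via the derivation of \eqref{evo}. But the lemma is a pointwise identity, and no identity turns $|\nabla R|^2$ into zero on a general soliton; the term is purely an artifact of the wrong coefficient. Delete the ``main obstacle'' discussion, and either correct the coefficient to $\frac18$ or go straight to the $\langle\rd,\Delta_f\rd\rangle$ formulation.
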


For a closed four-dimensional gradient Ricci soliton, integrating
\eqref{Sequat} by parts yields
\begin{align*}
\int_M|\nabla R|^2dv &=-\int_MR\Delta Rdv\\
&=-\int_MR\Big(\langle\nabla f,\nabla R\rangle+2\lambda R-2|Ric|^2\Big)dv\\
&=\frac 12\int_MR^2\Delta f dv-2\lambda\int_M R^2dv+\frac 12\int_M R^3dv+2\int_M R|\rd|^2dv.
\end{align*}
Combining this identity with \eqref{tra} immediately gives
\begin{lemma}\label{l-id}
Let $(M,g,f)$ be a closed four-dimensional gradient Ricci soliton. Then
\[
\int_M|\nabla R|^2dv=2\int_M R|\rd|^2dv.
\]
\end{lemma}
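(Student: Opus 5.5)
The plan is to integrate the weighted identity \eqref{Sequat} over the closed manifold and rewrite everything in terms of unweighted quantities using the trace identity \eqref{tra}. Since $M$ is closed, $\int_M R\,\Delta R\,dv=-\int_M|\nabla R|^2dv$. Substituting $\Delta R=\Delta_f R+\langle\nabla f,\nabla R\rangle$ and then \eqref{Sequat} gives
\[
\int_M|\nabla R|^2dv=-\int_M R\langle\nabla f,\nabla R\rangle dv-2\lambda\int_M R^2dv+2\int_M R|Ric|^2dv .
\]

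The first key step is to handle the drift term. I would write $R\nabla R=\tfrac12\nabla(R^2)$ and integrate by parts once more:
\[
-\int_M R\langle\nabla f,\nabla R\rangle dv=\frac12\int_M R^2\Delta f\,dv .
\]
Then \eqref{tra} with $n=4$ gives $\Delta f=4\lambda-R$, so this term equals $2\lambda\int_M R^2dv-\frac12\int_M R^3dv$.

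The second step is to split the Ricci norm into its trace and traceless parts, $|Ric|^2=|\rd|^2+\frac{R^2}{4}$. Hence $2\int_M R|Ric|^2dv=2\int_M R|\rd|^2dv+\frac12\int_M R^3dv$.

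Summing the three contributions, the $\lambda R^2$ terms cancel and so do the $R^3$ terms, which leaves $\int_M|\nabla R|^2dv=2\int_M R|\rd|^2dv$, as claimed. There is no real obstacle here. The only point needing care is the bookkeeping of signs in the two integrations by parts, which are justified because $M$ is closed. It is also worth checking that the dimension-specific constants, namely $n\lambda=4\lambda$ and the coefficient $\frac14$ in $|Ric|^2=|\rd|^2+\frac14R^2$, combine to produce the exact cancellations.
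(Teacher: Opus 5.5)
Your proposal is correct and follows essentially the same route as the paper: integrate $R\,\Delta R$ by parts, expand $\Delta R$ via \eqref{Sequat}, move the drift term onto $\Delta f=4\lambda-R$ using \eqref{tra}, and split $|Ric|^2=|\rd|^2+\frac{R^2}{4}$. The $\lambda R^2$ and $R^3$ terms then cancel exactly as you describe.
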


Next, we recall a Weitzenb\"ock formula and an integral identity derived in \cite{CaTr16}.
These formulas play an important role in the proof of Theorem \ref{Rigid2}.
\begin{proposition}\label{proWe}
Let $(M,g,f)$ be a four-dimensional shrinker satisfying \eqref{Eq1}. Then
\begin{equation}\label{Weitz}
\Delta_f|W^\pm|^2=2|\nabla W^\pm|^2+4\lambda|W^\pm|^2-36\,\mathrm{det}W^{\pm}
-\langle Ric\circ Ric, W^\pm\rangle.
\end{equation}
Furthermore, if $(M,g,f)$ is closed, then
\[
\int_M\langle Ric\circ Ric, W^{\pm}\rangle dv=4\int_M |\delta W^{\pm}|^2 dv,
\]
where $\circ$ denotes the Kulkarni-Nomizu product.
\end{proposition}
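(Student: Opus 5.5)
The statement to prove is Proposition \ref{proWe}, the Weitzenb\"ock formula for $|W^\pm|^2$ on a four-dimensional shrinker together with the integral identity. I will derive the pointwise formula from the evolution of the Weyl tensor along the self-similar Ricci flow, and then obtain the integral identity from a divergence computation.

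\emph{Step 1: the elliptic equation for $W^\pm$.} Under Ricci flow in dimension four, the half Weyl tensors evolve by
\[
\partial_t W^\pm=\Delta W^\pm+2(W^\pm)^2+4(W^\pm)^\#-\tfrac{1}{2}\,(\text{projection of }Ric\circ Ric\text{ onto }W^\pm)+\dots,
\]
where the dots are terms linear in the curvature. The quadratic Weyl terms are computed in Hamilton's $\wedge^\pm$ formalism, with $W^\pm$ viewed as traceless symmetric endomorphisms of $\wedge^\pm$. The shrinker is self-similar: $g(t)=(1-2\lambda t)\phi_t^*g$. Hence $\partial_t W^\pm$ at $t=0$ equals $\mathcal{L}_{\nabla f}W^\pm$ plus a scaling term of order $\lambda W^\pm$. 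Substituting gives an elliptic identity of the form
\[
\Delta_f W^\pm=2\lambda W^\pm-2(W^\pm)^2-4(W^\pm)^\#+\tfrac12 (Ric\circ Ric)^\pm\text{-part}.
\]
Alternatively, I could derive this identity directly, without the flow. From \eqref{Eq1}, the Cotton tensor satisfies $C_{ijk}=W_{ijkl}\nabla_l f$, and $\delta W=-\tfrac12 C$ in dimension four. Taking a second Bianchi-type Laplacian of $W$ and using $\nabla\mathrm{Hess} f=-\nabla Ric$ reproduces the same equation.

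\emph{Step 2: contract with $W^\pm$.} Use $\Delta_f|W^\pm|^2=2|\nabla W^\pm|^2+2\langle\Delta_f W^\pm,W^\pm\rangle$. With eigenvalues $\lambda_i^\pm$ of $W^\pm$ summing to zero, we have $\mathrm{tr}(W^\pm)^3=3\det W^\pm$ and $(W^\pm)^\#=\mathrm{diag}(\lambda_2\lambda_3,\lambda_1\lambda_3,\lambda_1\lambda_2)$. The latter gives $\langle (W^\pm)^\#,W^\pm\rangle=3\det W^\pm$. The quadratic Weyl terms therefore contribute a multiple of $\det W^\pm$. The normalization of the norm ($|W|^2$ as a 4-tensor versus as an operator on $\wedge^2$) must be tracked so that the coefficients come out as $4\lambda$, $-36$ and $-1$. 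This bookkeeping of constants, together with the sign and normalization of the $Ric\circ Ric$ projection, is the main obstacle. I would fix the constants by testing on a product such as $\SS^2\times\SS^2$ and on $\mathbb{CP}^2$, where $W^+\ne0$ and $\det W^+$ is known.

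\emph{Step 3: the integral identity.} The curvature decomposition gives $\nabla^l W_{ijkl}=-\tfrac12 C_{ijk}$, and the shrinker equation gives $C_{ijk}=W_{ijkl}\nabla_l f$ (up to sign). The computation then runs as follows:
\begin{itemize}
\item Write $\langle Ric\circ Ric,W^\pm\rangle=4R_{ik}R_{jl}W^\pm_{ijkl}$, using tracelessness of $W$.
\item Substitute $R_{jl}=\lambda g_{jl}-\nabla_j\nabla_l f$; the $\lambda g$ part vanishes by tracelessness of $W$.
\item Integrate by parts on the closed manifold, moving $\nabla_j$ off $\nabla_l f$.
\item The resulting terms are $\nabla_j W^\pm_{ijkl}$, which is a multiple of $\delta W^\pm$, and $\nabla_jR_{ik}$, which antisymmetrized gives the Cotton tensor, itself expressible through $\delta W^\pm$.
\item Write each term as $(\delta W^\pm)_{ikl}\nabla_l f\,R_{ik}$-type expressions. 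Replacing $W^\pm_{ijkl}\nabla_l f$ by $\delta W^\pm$, the integrand becomes $c|\delta W^\pm|^2$.
\end{itemize}
The identity $\delta W^\pm=\tfrac12 C^\pm$ holds because the splitting of $W$ into $W^\pm$ commutes with the divergence up to the Hodge projection. Matching constants then yields the factor $4$.
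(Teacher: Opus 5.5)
The paper gives no proof of this proposition; it quotes both statements from Cao--Tran. Measured against the standard argument, your Steps 1--2 follow the right route, but Step 3 has a genuine gap.

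\textbf{The gap in Step 3.} The identity you rely on, $C_{ijk}=\pm W_{ijkl}\nabla_l f$, is false on a gradient shrinker. What the soliton equation gives (Cao--Chen) is $C_{ijk}+W_{ijkl}\nabla_l f=D_{ijk}$, up to curvature sign conventions and with $C$ normalized as in your proposal. Here $D$ is built from $Ric\otimes\nabla f$, $R\,g\otimes\nabla f$ and $g\otimes\nabla R$, and it does not vanish in general. On the cylinder $\SS^2\times\RR^2$ the Ricci tensor is parallel, so $C=0$. Yet $W(e_1,e_3,e_1,e_3)=-\frac{R}{12}$ for $e_1$ tangent to $\SS^2$ and $e_3$ tangent to $\RR^2$, so $W(\nabla f)\neq0$ wherever $\nabla f\neq0$.

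So the replacement ``$W^\pm_{ijkl}\nabla_l f\mapsto\delta W^\pm$'' is not available. Moreover, your integration by parts produces two terms, $4\int(\nabla_jR_{ik})W^\pm_{ijkl}\nabla_l f\,dv$ and $4\int R_{ik}(\nabla_jW^\pm_{ijkl})\nabla_l f\,dv$. The second contains no factor $W^\pm(\nabla f)$ at all: it pairs $\delta W^\pm$ with $Ric\otimes\nabla f$, and on its own it is not a multiple of $\int|\delta W^\pm|^2dv$.

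The missing idea is a cancellation between the two terms:
\begin{itemize}
\item In the first term, antisymmetrize $\nabla_jR_{ik}$ in $(i,j)$ and insert $W(\nabla f)=D-C$. It becomes $c\int|C^\pm|^2dv-c\int\langle C^\pm,D\rangle dv$.
\item In the second term, antisymmetrize $R_{ik}\nabla_lf$ in $(k,l)$ and use that $C^\pm$ is trace-free (because $W^\pm$ is). It equals exactly $+c\int\langle C^\pm,D\rangle dv$, with the same constant $c$.
\end{itemize}
Only the sum is a multiple of $\int|C^\pm|^2dv$, i.e.\ of $\int|\delta W^\pm|^2dv$. The identity is genuinely integral: on $\SS^2\times\RR^2$ one has $\delta W^\pm=0$ but $\langle Ric\circ Ric,W^\pm\rangle\neq0$ pointwise, so the integrand is never pointwise $c|\delta W^\pm|^2$.

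A route that avoids $D$ entirely is to integrate \eqref{Weitz} against $dv$, using $\int\Delta_fu\,dv=\int(4\lambda-R)u\,dv$. This gives $\int\langle Ric\circ Ric,W^\pm\rangle dv=\int(2|\nabla W^\pm|^2+R|W^\pm|^2-36\det W^\pm)\,dv$. Then compare with the classical integral Weitzenb\"ock identity for $W^\pm$ on closed oriented four-manifolds, whose right-hand side is a multiple of $\int|\delta W^\pm|^2dv$.

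\textbf{A secondary problem in Steps 1--2.} Your plan to fix constants by testing on $\SS^2\times\SS^2$ and $\mathbb{CP}^2$ cannot see the $Ric\circ Ric$ term. Both are Einstein, so $Ric\circ Ric=\lambda^2 g\circ g$ is orthogonal to $W^\pm$. Both also have $\nabla W^\pm=0$, so they only confirm $4\lambda|W^\pm|^2=36\det W^\pm$.

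This matters because the signs you wrote in Step 1 would produce $+\langle Ric\circ Ric,W^\pm\rangle$ in the final formula. To get the stated $-\langle Ric\circ Ric,W^\pm\rangle$, $\Delta_fW^\pm$ must contain $-\frac12(Ric\circ Ric)^\pm$. Equivalently, the Ricci term enters $\partial_tW^\pm$ with a plus sign: it is the traceless part of the $BB^{T}$ piece of Hamilton's $\wedge^\pm$-block $A^2+2A^\#+BB^{T}$, where $B$ is the traceless-Ricci block.

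You can fix this in either of two ways:
\begin{itemize}
\item Carry that block computation through honestly. With $A=W^\pm+\frac{R}{12}\mathrm{Id}$ one has $A^2+2A^\#=3(W^\pm)^2+c\,\mathrm{Id}$, which is where $-36\det W^\pm$ comes from.
\item Test on a non-Einstein shrinker with parallel curvature, such as $\SS^2\times\RR^2$, where $\nabla W^\pm=0$ but $\langle Ric\circ Ric,W^\pm\rangle\neq0$.
\end{itemize}
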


Finally, we give a simple observation concerning four-dimensional Einstein manifolds:
a lower bound on the $2$nd-Ricci curvature yields an upper bound for the sectional
curvature. This fact will be used in the proof of Theorem \ref{Rig1}.
\begin{proposition}\label{relat}
Let $(M,g)$ be a four-dimensional Einstein manifold satisfying
\[
Ric=\lambda\, g
\]
for some constant $\lambda>0$. If $Ric_2(M)\ge \frac{1}{6}\,\lambda$, then
every sectional curvature satisfies $K\le\frac{5}{6}\lambda$.
\end{proposition}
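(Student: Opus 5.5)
We argue directly from the definition of the $2$nd-Ricci curvature, using the Einstein condition to turn lower bounds on sums of two sectional curvatures into upper bounds on a single one. Fix a point $x$, a plane $P\subset T_xM$, and an orthonormal basis $\{e_1,e_2,e_3,e_4\}$ of $T_xM$ with $P=\mathrm{span}\{e_1,e_2\}$. Write $K_{ij}=Rm(e_i,e_j,e_i,e_j)$. The Einstein condition gives, for each $i$, $\sum_{j\neq i}K_{ij}=\lambda$. In particular,
\[
K_{12}=\lambda-K_{13}-K_{14}.
\]

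Next we apply the hypothesis $Ric_2(M)\ge\frac{1}{6}\lambda$ to the orthonormal triple $\{e_1,e_3,e_4\}$, with $e_1$ in the first slot. This yields $K_{13}+K_{14}\ge\frac16\lambda$. Substituting into the identity above gives
\[
K(P)=K_{12}\le\lambda-\tfrac16\lambda=\tfrac56\lambda.
\]
Since $P$ and $x$ were arbitrary, every sectional curvature satisfies $K\le\frac56\lambda$.

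No step here is a real obstacle. The only point needing care is that the definition of $Ric_2$ allows any orthonormal triple with a distinguished first vector. The triple $\{e_1,e_3,e_4\}$ is such a triple, so the bound applies to the two planes through $e_1$ that are complementary to $e_2$.
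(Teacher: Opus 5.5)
Your proposal is correct and is essentially the paper's argument: write $\lambda = Ric(e_1,e_1) = K_{12}+K_{13}+K_{14}$ and apply the $Ric_2$ bound to the triple $\{e_1,e_3,e_4\}$ to get $K_{12}\le\frac56\lambda$. The paper first takes $P$ to be a plane of maximal sectional curvature at $x$; you work with an arbitrary plane, which is equally valid and shows that the maximality choice is unnecessary.
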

\begin{proof}[Proof of Proposition \ref{relat}]
For any point $x\in M$, let $P\subset T_xM$ be a plane realizing the maximal
sectional curvature at $x$. We may select an orthonormal basis $\{e_i\}^4_{i=1}$ of
$T_xM$ such that $P=\mathrm{Span}\{e_1,e_2\}$ and
\[
Ric(e_1,e_1)=K(e_1, e_2)+K(e_1, e_3)+K(e_1, e_4)=\lambda.
\]
Suppose $Ric_2(M)\ge\frac{1}{6}\lambda$ at $x\in M$. It follows that
\[
K(e_1, e_3)+K(e_1, e_4)\ge\frac{1}{6}\lambda,
\]
and hence $K(e_1, e_2)\le\frac{5}{6}\lambda$. That is, $K\le\frac{5}{6}\lambda$
at $x\in M$. Thus, the result follows.
\end{proof}

\section{Pinching estimates for $2$nd-Ricci curvature}\label{prorig1}
In this section, motivated by Proposition 2.1 from reference \cite{[Cat]},
we first establish a series of algebraic curvature estimates under a lower
bound condition on the $2$nd-Ricci curvature. These results generalize the
the conclusions of \cite{[Cat]}.

\begin{hidden}
\subsection{NEW}
\begin{lemma}\label{lem:3A}
Let $(\overline K_P)_{P\in\mathcal P}$ be real numbers satisfying
\eqref{eq:cone2}, and let $(c_P)_{P\in\mathcal P}$ be nonnegative numbers
satisfying
\begin{equation}\label{eq:coeff}
  \sum_{Q\ne P,\ Q\ne P^{c}} c_Q\ \ge\ c_P+c_{P^{c}}
  \qquad\text{for every } P\in\mathcal P .
\end{equation}
Then
\[
  \sum_{P\in\mathcal P}c_P\,\overline K_P\ \ge\ 0 .
\]
\end{lemma}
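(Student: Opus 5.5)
The plan is a direct case analysis on the signs of the $\overline K_P$, with no linear-programming machinery. I read the (here undisplayed) condition \eqref{eq:cone2} as the $2$nd-Ricci pinching recast for the six coordinate planes $\mathcal P=\{e_i\wedge e_j\}$ of an orthonormal frame:
\[
\overline K_P+\overline K_Q\ge 0 \quad\text{whenever } Q\ne P,\ Q\ne P^{c},
\]
i.e. whenever $P$ and $Q$ share a basis vector. Here $P^c$ is the complementary plane and $\overline K_P=K_P-\tfrac12 H$ with $H=2\varepsilon R$. If the paper's \eqref{eq:cone2} differs, the argument below should be adjusted accordingly. The combinatorial picture is the octahedron: its vertices are the six planes, antipodal vertices are the pairs $\{P,P^c\}$, and every non-antipodal pair is joined by an edge along which the sum of the $\overline K$'s is nonnegative.

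\textbf{Step 1: locate the negative terms.} Two planes $P,Q$ with $\overline K_P<0$ and $\overline K_Q<0$ cannot be adjacent, since that would violate \eqref{eq:cone2}. So they must be antipodal, $Q=P^c$. Hence the set of planes with negative $\overline K$ is empty, a single plane $P$, or an antipodal pair $\{P,P^c\}$. If it is empty, the conclusion is immediate since all $c_P\ge0$.

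\textbf{Step 2: bound the remaining terms.} Fix an antipodal pair $\{P,P^c\}$ containing all negative terms, and put $M:=\max\{-\overline K_P,-\overline K_{P^c},0\}$. Every other plane $Q$ is adjacent to both $P$ and $P^c$. By \eqref{eq:cone2}, $\overline K_Q\ge -\overline K_P$ and $\overline K_Q\ge-\overline K_{P^c}$, so $\overline K_Q\ge M\ge0$. Using $c\ge0$ we also have $c_P\overline K_P\ge -c_PM$ and $c_{P^c}\overline K_{P^c}\ge -c_{P^c}M$; this holds trivially for whichever of the two is nonnegative. Summing and applying \eqref{eq:coeff} gives
\[
\sum_{P'\in\mathcal P}c_{P'}\overline K_{P'}\ \ge\ \Big(\sum_{Q\ne P,P^c}c_Q-c_P-c_{P^c}\Big)M\ \ge 0 .
\]

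I do not expect a real obstacle here. The only point needing care is Step 1: one must check that "adjacent" in \eqref{eq:cone2} covers exactly the four planes other than $P$ and $P^c$. That is what lets a single $M$ dominate both negative terms simultaneously, and it is precisely why the coefficient condition \eqref{eq:coeff} involves $c_P+c_{P^c}$ rather than $c_P$ alone.

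An equivalent route would write $\sum c_P\overline K_P$ as a nonnegative combination of the edge sums $\overline K_P+\overline K_Q$, i.e. a fractional $b$-matching on $K_{2,2,2}$. Condition \eqref{eq:coeff} is the corresponding Hall-type condition for independent sets. The sign argument above avoids constructing the weights explicitly, so I would use it.
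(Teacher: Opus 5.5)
Your proof is correct and is essentially the paper's own argument. Negative values of $\overline K$ can occur only on a single opposite couple $\{P_0,P_0^{c}\}$; the other four planes then satisfy $\overline K_Q\ge m:=\max\{-\overline K_{P_0},-\overline K_{P_0^{c}},0\}$; and the coefficient condition at $P_0$ finishes the estimate. Your reading of the cone condition as $\overline K_P+\overline K_Q\ge 0$ for distinct, non-opposite planes is exactly how the paper uses it.
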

 
\begin{proof}
Put $\mathcal N:=\{P\in\mathcal P:\overline K_P<0\}$. If $P,Q\in\mathcal N$ are
distinct with $Q\ne P^{c}$, then $\overline K_P+\overline K_Q<0$, contradicting
\eqref{eq:cone2}. Hence any two distinct elements of $\mathcal N$ are opposite,
so $\mathcal N$ is either empty or contained in a single opposite couple
$\{P_0,P_0^{c}\}$ from \eqref{eq:couples}.
 
If $\mathcal N=\emptyset$, the conclusion is immediate since $c_P\ge0$.
Otherwise fix such a couple and set
\[
  m:=\max\bigl\{-\overline K_{P_0},\ -\overline K_{P_0^{c}},\ 0\bigr\}\ >\ 0 .
\]
By definition $\overline K_P\ge-m$ for $P\in\{P_0,P_0^{c}\}$. If
$Q\notin\{P_0,P_0^{c}\}$, then $Q$ meets both $P_0$ and $P_0^{c}$, so
\eqref{eq:cone2} gives $\overline K_Q\ge-\overline K_{P_0}$ and
$\overline K_Q\ge-\overline K_{P_0^{c}}$; moreover $Q\notin\mathcal N$ forces
$\overline K_Q\ge0$. Therefore $\overline K_Q\ge m$ for every such $Q$.
Using $c_P\ge0$ throughout,
\[
  \sum_{P\in\mathcal P}c_P\overline K_P
  =c_{P_0}\overline K_{P_0}+c_{P_0^{c}}\overline K_{P_0^{c}}
   +\sum_{Q\ne P_0,P_0^{c}}c_Q\overline K_Q
  \ \ge\ -m\bigl(c_{P_0}+c_{P_0^{c}}\bigr)
   +m\sum_{Q\ne P_0,P_0^{c}}c_Q\ \ge\ 0 ,
\]
the last inequality being \eqref{eq:coeff} applied to $P=P_0$.
\end{proof}
 
\begin{lemma}\label{lem:3B}
Let $\lambda_1\le\lambda_2\le\lambda_3\le\lambda_4$ be real numbers and set
$d_P:=(\lambda_i-\lambda_j)^2$ for $P=\{i,j\}\in\mathcal P$. Then both families
\[
  c_P:=d_P
  \qquad\text{and}\qquad
  c_P:=d_{P^{c}}
\]
are nonnegative and satisfy \eqref{eq:coeff}.
\end{lemma}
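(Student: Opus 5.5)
The plan is to reduce both families to a single inequality among three quantities, and then verify it with an explicit algebraic identity. Here $\mathcal P$ is the set of the six unordered pairs $\{i,j\}\subset\{1,2,3,4\}$, and $P^{c}$ is the complementary pair. The pairs split into the three opposite couples $\{12,34\}$, $\{13,24\}$, $\{14,23\}$.

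\emph{Nonnegativity and the form of the condition.} Both families are clearly nonnegative, since they consist of squares. For a fixed $P$, the set $\{Q: Q\ne P, Q\ne P^c\}$ is the union of the two couples other than the one containing $P$. Define
\[
S_1:=d_{12}+d_{34},\qquad S_2:=d_{13}+d_{24},\qquad S_3:=d_{14}+d_{23}.
\]
For $c_P=d_P$, condition \eqref{eq:coeff} for a $P$ in the $k$-th couple reads
\[
S_k\le \sum_{l\ne k}S_l .
\]
For $c_P=d_{P^c}$, the index set $\{Q\ne P,P^c\}$ is closed under complementation, so $\sum_{Q} c_Q=\sum_Q d_Q$. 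Likewise $c_P+c_{P^c}=d_P+d_{P^c}$. Hence the second family gives exactly the same three inequalities, and it suffices to prove that $S_1,S_2,S_3$ satisfy the triangle-type inequalities $S_k\le S_l+S_m$.

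\emph{Verification.} Introduce the nonnegative gaps
\[
x=\lambda_2-\lambda_1,\qquad y=\lambda_3-\lambda_2,\qquad z=\lambda_4-\lambda_3 .
\]
In these variables,
\[
S_1=x^2+z^2,\qquad S_2=(x+y)^2+(y+z)^2,\qquad S_3=(x+y+z)^2+y^2 .
\]
Expanding gives
\[
S_2-S_1=2y(x+y+z)\ge0,\qquad S_3-S_2=2xz\ge0 .
\]
So $S_1\le S_2\le S_3$, and the inequalities for $k=1,2$ are automatic. The only nontrivial one is $S_3\le S_1+S_2$. A direct expansion shows
\[
S_1+S_2-S_3=(x-z)^2\ge0 .
\]
This completes the argument.

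The only step requiring care is the bookkeeping: recognizing that condition \eqref{eq:coeff} depends only on the couple sums $S_k$, and that the complemented family $c_P=d_{P^c}$ yields literally the same inequalities. Once this is in place, the remaining algebra is a short expansion.
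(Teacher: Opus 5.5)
Your proof is correct and follows the paper's route. Both use the same gap variables and reduce to the three couple sums $S_1,S_2,S_3$. Both treat the family $c_P=d_{P^c}$ by the same complementation argument, and both rest on the key identity $S_1+S_2-S_3=(x-z)^2$. The only difference is minor: you dispose of the other two inequalities via the monotonicity $S_1\le S_2\le S_3$, which uses the ordering. The paper instead writes them as the perfect squares $S_1+S_3-S_2=(x+z)^2$ and $S_2+S_3-S_1=(x+2y+z)^2$, so its three identities hold even without assuming $\lambda_1\le\lambda_2\le\lambda_3\le\lambda_4$.
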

 
\begin{proof}
Write $\alpha:=\lambda_2-\lambda_1$, $\beta:=\lambda_3-\lambda_2$ and
$\gamma:=\lambda_4-\lambda_3$, so that $\alpha,\beta,\gamma\ge0$ and
\[
  d_{12}=\alpha^2,\quad d_{23}=\beta^2,\quad d_{34}=\gamma^2,\quad
  d_{13}=(\alpha+\beta)^2,\quad d_{24}=(\beta+\gamma)^2,\quad
  d_{14}=(\alpha+\beta+\gamma)^2 .
\]
Expanding, one verifies the three identities
\begin{align}
  d_{12}+d_{23}+d_{34}+d_{14}-d_{13}-d_{24}&=(\alpha+\gamma)^2,
     \label{eq:id1}\\
  d_{12}+d_{13}+d_{24}+d_{34}-d_{14}-d_{23}&=(\alpha-\gamma)^2,
     \label{eq:id2}\\
  d_{13}+d_{14}+d_{23}+d_{24}-d_{12}-d_{34}&=(\alpha+2\beta+\gamma)^2 .
     \label{eq:id3}
\end{align}
These are exactly the three instances of \eqref{eq:coeff} for the family
$c_P=d_P$, one for each opposite couple in \eqref{eq:couples}, and every
right-hand side is nonnegative.
 
For the second family, observe that $P\mapsto P^{c}$ is an involution of
$\mathcal P$ preserving each opposite couple in \eqref{eq:couples} setwise.
Consequently, for every $P\in\mathcal P$,
\[
  \sum_{Q\ne P,\ Q\ne P^{c}}d_{Q^{c}}
  =\sum_{Q'\ne P^{c},\ Q'\ne P}d_{Q'} ,
  \qquad
  d_{P^{c}}+d_{(P^{c})^{c}}=d_{P}+d_{P^{c}} ,
\]
so that \eqref{eq:coeff} for the family $(d_{P^{c}})_{P}$ is the very same
system of three inequalities \eqref{eq:id1}--\eqref{eq:id3}.
\end{proof}
 
\begin{remark}
Only \eqref{eq:id1} and \eqref{eq:id2} can degenerate: they vanish precisely
when $\lambda_2-\lambda_1=-(\lambda_4-\lambda_3)$, resp.\
$\lambda_2-\lambda_1=\lambda_4-\lambda_3$. Since $\alpha,\gamma\ge0$, the first
forces $\alpha=\gamma=0$.
\end{remark}
 \subsection{The curvature estimates}
 
\begin{lemma}\label{lem:31}
Let $(M,g)$ be a four-dimensional manifold. If the 2nd-Ricci curvature satisfies
$\operatorname{Ric}_2(M)\ge2\varepsilon R$ for some $\varepsilon\in\mathbb R$, then
\[
  R_{ijkl}\Rc_{ik}\Rc_{jl}
  \ \le\ \frac{1-16\varepsilon}{4}\,R\,|\Rico|^2
   +\Rc_{ij}\Rc_{ik}\Rc_{jk} .
\]
\end{lemma}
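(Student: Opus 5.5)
The plan is to diagonalize the traceless Ricci tensor and reduce the inequality to a weighted sum of sectional curvatures, which the $Ric_2$ hypothesis then controls by a combinatorial argument over coordinate planes.

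\textbf{Reduction to sectional curvatures.} At a point, choose an orthonormal basis $\{e_i\}_{i=1}^4$ diagonalizing $\rd$, with eigenvalues $\mu_1\le\mu_2\le\mu_3\le\mu_4$ and $\sum_i\mu_i=0$. Write $K_{ij}=R_{ijij}$ for the sectional curvature of the plane $e_i\wedge e_j$. Then
\[
R_{ijkl}\R_{ik}\R_{jl}=\sum_{i\neq j}K_{ij}\mu_i\mu_j .
\]
Use the identity $\mu_i\mu_j=\tfrac12(\mu_i^2+\mu_j^2)-\tfrac12(\mu_i-\mu_j)^2$. Since $\sum_{j\ne i}K_{ij}=R_{ii}=\mu_i+\tfrac R4$, the first part gives
\[
\sum_i\mu_i^2\Big(\mu_i+\frac R4\Big)=\R_{ij}\R_{jk}\R_{ki}+\frac R4|\rd|^2 .
\]
Hence
\[
R_{ijkl}\R_{ik}\R_{jl}=\frac R4|\rd|^2+\R_{ij}\R_{jk}\R_{ki}-\sum_{i<j}K_{ij}(\mu_i-\mu_j)^2 .
\]
Moreover $\sum_{i<j}(\mu_i-\mu_j)^2=4\sum_i\mu_i^2-(\sum_i\mu_i)^2=4|\rd|^2$. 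The lemma is therefore equivalent to
\[
\sum_{i<j}(K_{ij}-\varepsilon R)\,d_{ij}\ge0,\qquad d_{ij}:=(\mu_i-\mu_j)^2 .
\]

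\textbf{Using the hypothesis.} Set $\overline K_{ij}=K_{ij}-\varepsilon R$ for each of the six coordinate planes. Any two distinct coordinate planes that are not complementary (i.e.\ $\{i,j\}$ and $\{i,k\}$) share a unit vector $e_i$. So $Ric_2(M)\ge 2\varepsilon R$ gives $\overline K_{ij}+\overline K_{ik}\ge0$. Consequently at most one complementary couple $\{P_0,P_0^c\}$, say $\{12,34\}$, $\{13,24\}$ or $\{14,23\}$, can contain negative values. Let $m=\max\{-\overline K_{P_0},-\overline K_{P_0^c},0\}$. Every other plane $Q$ meets both $P_0$ and $P_0^c$, so $\overline K_Q\ge m$. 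Then
\[
\sum_P d_P\overline K_P\ge m\Big(\sum_{Q\ne P_0,P_0^c}d_Q-d_{P_0}-d_{P_0^c}\Big).
\]

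\textbf{The coefficient check.} It remains to verify that the bracket is nonnegative for each of the three couples; this is the step I expect to be the main obstacle. Put $\alpha=\mu_2-\mu_1$, $\beta=\mu_3-\mu_2$, $\gamma=\mu_4-\mu_3$, all $\ge0$. Expanding $d_{12}=\alpha^2$, $d_{13}=(\alpha+\beta)^2$, and so on, the three brackets come out as
\[
(\alpha+2\beta+\gamma)^2,\qquad (\alpha-\gamma)^2,\qquad (\alpha+\gamma)^2,
\]
for the couples $\{12,34\}$, $\{14,23\}$ and $\{13,24\}$ respectively, and all three are nonnegative. This yields $\sum_P d_P\overline K_P\ge0$, i.e.\ $\sum_{i<j}K_{ij}d_{ij}\ge 4\varepsilon R|\rd|^2$.

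\textbf{Conclusion.} Substituting this into the reduction gives
\[
R_{ijkl}\R_{ik}\R_{jl}\le \frac{1-16\varepsilon}{4}R|\rd|^2+\R_{ij}\R_{jk}\R_{ki}.
\]
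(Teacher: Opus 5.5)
Your proposal is correct and matches the paper's argument. Both reduce the inequality to $\sum_{i<j}(\mu_i-\mu_j)^2\,\overline K_{ij}\ge 0$ with $\overline K_{ij}=K_{ij}-\varepsilon R$; the paper packages this shift through $\overline{Rm}=Rm-\frac{\varepsilon}{2}R\,g\circ g$ instead of your identity $\sum_{i<j}(\mu_i-\mu_j)^2=4|\rd|^2$. The paper then uses the same ingredients you do: the negative $\overline K$'s lie in one complementary couple, $\overline K_Q\ge m$ on the other four planes, and the three squares $(\alpha+2\beta+\gamma)^2$, $(\alpha-\gamma)^2$, $(\alpha+\gamma)^2$ (the displayed version splits this into cases by which couple is negative, but the content is the same).
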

 
\begin{proof}
With the notation \eqref{eq:eigen}--\eqref{eq:modified-data}, and using
$\Rc_{ij}=\lambda_i\delta_{ij}$ together with
$\overline\mu_k=\sum_{i\ne k}\overline K_{ik}$, we compute
\begin{equation}\label{eq:31}
\begin{aligned}
  \overline R_{ijkl}\Rc_{ik}\Rc_{jl}-\overline R_{ij}\Rc_{ik}\Rc_{jk}
  &=\sum_{i,j=1}^{4}\lambda_i\lambda_j\overline K_{ij}
    -\sum_{k=1}^{4}\overline\mu_k\lambda_k^2\\
  &=2\sum_{i<j}\lambda_i\lambda_j\overline K_{ij}
    -\sum_{i<j}\bigl(\lambda_i^2+\lambda_j^2\bigr)\overline K_{ij}
  =-\sum_{i<j}\bigl(\lambda_i-\lambda_j\bigr)^2\overline K_{ij}.
\end{aligned}
\end{equation}
By Lemma \ref{lem:3B} the coefficients $c_{\{i,j\}}=(\lambda_i-\lambda_j)^2$
satisfy \eqref{eq:coeff}, and by hypothesis $(\overline K_P)$ satisfies
\eqref{eq:cone2}. Lemma \ref{lem:3A} therefore gives
\[
  I:=\sum_{i<j}(\lambda_i-\lambda_j)^2\,\overline K_{ij}\ \ge\ 0 ,
\]
and hence, by \eqref{eq:31},
\begin{equation}\label{eq:31bar}
  \overline R_{ijkl}\Rc_{ik}\Rc_{jl}\ \le\ \overline R_{ij}\Rc_{ik}\Rc_{jk}.
\end{equation}
 
It remains to rewrite \eqref{eq:31bar} in terms of $Rm$. Since
$\sum_{i\ne j}\lambda_i\lambda_j=\bigl(\sum_i\lambda_i\bigr)^2-\sum_i\lambda_i^2
=-|\Rico|^2$, formulas \eqref{eq:modified-data} give
\[
  \overline R_{ijkl}\Rc_{ik}\Rc_{jl}
   =R_{ijkl}\Rc_{ik}\Rc_{jl}+\varepsilon R\,|\Rico|^2,
  \qquad
  \overline R_{ij}\Rc_{ik}\Rc_{jk}
   =R_{ij}\Rc_{ik}\Rc_{jk}-3\varepsilon R\,|\Rico|^2 .
\]
Combining these with
$R_{ij}\Rc_{ik}\Rc_{jk}=\Rc_{ij}\Rc_{ik}\Rc_{jk}+\tfrac14R|\Rico|^2$, inequality
\eqref{eq:31bar} becomes
\[
  R_{ijkl}\Rc_{ik}\Rc_{jl}+\varepsilon R|\Rico|^2
  \ \le\ \Rc_{ij}\Rc_{ik}\Rc_{jk}+\frac{1-12\varepsilon}{4}R|\Rico|^2 ,
\]
which is the desired estimate.
\end{proof}
 
\begin{lemma}\label{lem:32}
Let $(M,g)$ be a four-dimensional manifold. If the 2nd-Ricci curvature satisfies
$\operatorname{Ric}_2(M)\ge2\varepsilon R$ for some $\varepsilon\in\mathbb R$, then
\[
  R_{ijkl}\Rc_{ik}\Rc_{jl}
  \ \le\ \frac{1-16\varepsilon}{4}\,R\,|\Rico|^2
   -3\,\Rc_{ij}\Rc_{ik}\Rc_{jk} .
\]
\end{lemma}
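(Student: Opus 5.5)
The plan is to argue exactly as in Lemma \ref{lem:31}, but with the second coefficient family from Lemma \ref{lem:3B}, namely $c_P=d_{P^c}$, in place of $c_P=d_P$. Fix a point, take an orthonormal frame that diagonalizes the traceless Ricci tensor, $\Rc_{ij}=\lambda_i\delta_{ij}$ with $\sum_i\lambda_i=0$, and use the modified sectional data $\overline K_{ij}$ from \eqref{eq:modified-data}. Concretely, $\overline K_{ij}=K_{ij}-\varepsilon R$, so that $\overline R=(1-12\varepsilon)R$ and $\overline\mu_k=\mu_k-3\varepsilon R$, where $\mu_k=R_{kk}$. The hypothesis $Ric_2(M)\ge 2\varepsilon R$ is precisely the cone condition \eqref{eq:cone2} for these numbers.

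The first step is to apply Lemma \ref{lem:3A} with $c_{\{i,j\}}=(\lambda_k-\lambda_l)^2$, where $\{k,l\}$ is the complementary pair to $\{i,j\}$. By Lemma \ref{lem:3B} these coefficients are admissible, and Lemma \ref{lem:3A} gives
\[
J:=\sum_{i<j}(\lambda_k-\lambda_l)^2\,\overline K_{ij}\ \ge\ 0 .
\]

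The second step, which I expect to be the main point, is rewriting $J$ in terms of curvature contractions. Tracelessness gives $\lambda_k+\lambda_l=-(\lambda_i+\lambda_j)$. Hence $2\lambda_k\lambda_l=(\lambda_i+\lambda_j)^2-\lambda_k^2-\lambda_l^2$, and therefore
\[
(\lambda_k-\lambda_l)^2=2(\lambda_k^2+\lambda_l^2)-(\lambda_i+\lambda_j)^2
=2|\Rico|^2-3(\lambda_i^2+\lambda_j^2)-2\lambda_i\lambda_j .
\]
Summing against $\overline K_{ij}$ uses three identities:
\[
\sum_{i<j}\overline K_{ij}=\tfrac12\overline R,\qquad
\sum_{i<j}(\lambda_i^2+\lambda_j^2)\overline K_{ij}=\sum_k\overline\mu_k\lambda_k^2=\overline R_{ij}\Rc_{ik}\Rc_{jk},\qquad
2\sum_{i<j}\lambda_i\lambda_j\overline K_{ij}=\overline R_{ijkl}\Rc_{ik}\Rc_{jl}.
\]
These turn $J\ge0$ into
\[
\overline R_{ijkl}\Rc_{ik}\Rc_{jl}\ \le\ \overline R\,|\Rico|^2-3\,\overline R_{ij}\Rc_{ik}\Rc_{jk}.
\]

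The final step converts back to $Rm$ using the relations already recorded in the proof of Lemma \ref{lem:31}:
\[
\overline R_{ijkl}\Rc_{ik}\Rc_{jl}=R_{ijkl}\Rc_{ik}\Rc_{jl}+\varepsilon R|\Rico|^2,\qquad
\overline R_{ij}\Rc_{ik}\Rc_{jk}=\Rc_{ij}\Rc_{ik}\Rc_{jk}+\big(\tfrac14-3\varepsilon\big)R|\Rico|^2,
\]
together with $\overline R=(1-12\varepsilon)R$. The right-hand side becomes
\[
\big(1-12\varepsilon-\tfrac34+9\varepsilon\big)R|\Rico|^2-3\Rc_{ij}\Rc_{ik}\Rc_{jk}.
\]
Moving $\varepsilon R|\Rico|^2$ to the right gives the coefficient $\tfrac14-4\varepsilon=\tfrac{1-16\varepsilon}{4}$, which is the claimed bound. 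The only delicate points are the bookkeeping in the second step and checking that the complement family is admissible, which is exactly the content of the second half of Lemma \ref{lem:3B}.
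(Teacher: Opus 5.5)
Your proposal is correct and follows the paper's proof. You use the same key step, nonnegativity of the complementary-weighted sum $\sum_{i<j}(\lambda_{\bar i}-\lambda_{\bar j})^2\overline{K}_{ij}$ from the cone lemma, to get $\overline{R}_{ijkl}\R_{ik}\R_{jl}\le\overline{R}|\rd|^2-3\overline{R}_{ij}\R_{ik}\R_{jk}$, and then the same conversion back to $Rm$; the only difference is bookkeeping, since you expand $(\lambda_{\bar i}-\lambda_{\bar j})^2=2|\rd|^2-3(\lambda_i^2+\lambda_j^2)-2\lambda_i\lambda_j$ directly, whereas the paper reaches the same inequality through the $-\tfrac{\overline{R}}{4}|\rd|^2$ shift and the identity for $\sum_{i<j}(\lambda_i-\lambda_j)^2\overline{K}_{ij}$.
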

 
\begin{proof}
We keep the notation of the previous proof. First,
\begin{equation}\label{eq:33}
  \overline R_{ikjl}\Rc_{ij}\Rc_{kl}-\frac{\overline R}{4}|\Rico|^2
  =\sum_{i,j=1}^{4}\lambda_i\lambda_j\overline K_{ij}
   -\frac{\overline R}{4}\sum_{k=1}^{4}\lambda_k^2 ,
\end{equation}
and since
$\overline R=\sum_{i,j}\overline K_{ij}=2\sum_{i<j}\overline K_{ij}$, the
right-hand side of \eqref{eq:33} equals
\begin{equation}\label{eq:34}
  2\sum_{i<j}\lambda_i\lambda_j\overline K_{ij}
  -\frac12\sum_{i<j}\overline K_{ij}\sum_{k=1}^{4}\lambda_k^2
  =\sum_{i<j}\Bigl(2\lambda_i\lambda_j-\frac12\sum_{k}\lambda_k^2\Bigr)
     \overline K_{ij} .
\end{equation}
 
Set
\[
  J:=\sum_{i<j}\Bigl(\sum_{k}\lambda_k^2
      -\frac32\bigl(\lambda_i^2+\lambda_j^2\bigr)-\lambda_i\lambda_j\Bigr)
      \overline K_{ij} .
\]
Writing $\{\bar i,\bar j\}=\{1,2,3,4\}\setminus\{i,j\}$ with $\bar i<\bar j$ and
using $\sum_k\lambda_k=0$, we have for each $i<j$
\[
  \sum_{k}\lambda_k^2-\frac32(\lambda_i^2+\lambda_j^2)-\lambda_i\lambda_j
  =\lambda_{\bar i}^2+\lambda_{\bar j}^2-\frac12(\lambda_i+\lambda_j)^2
  =\lambda_{\bar i}^2+\lambda_{\bar j}^2
     -\frac12(\lambda_{\bar i}+\lambda_{\bar j})^2
  =\frac12(\lambda_{\bar i}-\lambda_{\bar j})^2 ,
\]
so that
\begin{equation}\label{eq:J}
  2J=\sum_{i<j}(\lambda_{\bar i}-\lambda_{\bar j})^2\,\overline K_{ij}
    =\sum_{P\in\mathcal P}d_{P^{c}}\,\overline K_P .
\end{equation}
By Lemma \ref{lem:3B} the family $c_P=d_{P^{c}}$ satisfies \eqref{eq:coeff};
Lemma \ref{lem:3A} therefore yields
\[
  J\ \ge\ 0 .
\]
 
We now conclude. Adding $\tfrac34\sum_{i<j}(\lambda_i-\lambda_j)^2\overline K_{ij}$
to \eqref{eq:34} and using
\[
  2\lambda_i\lambda_j-\frac12\sum_k\lambda_k^2
   +\frac34(\lambda_i-\lambda_j)^2
  =-\frac12\Bigl(\sum_k\lambda_k^2-\frac32(\lambda_i^2+\lambda_j^2)
     -\lambda_i\lambda_j\Bigr),
\]
we obtain from $J\ge0$ that
\[
  \sum_{i<j}\Bigl(2\lambda_i\lambda_j-\frac12\sum_k\lambda_k^2\Bigr)
     \overline K_{ij}
  =-\frac12 J-\frac34\sum_{i<j}(\lambda_i-\lambda_j)^2\overline K_{ij}
  \ \le\ -\frac34\sum_{i<j}(\lambda_i-\lambda_j)^2\overline K_{ij} .
\]
Combining this with \eqref{eq:33}, \eqref{eq:34} and identity \eqref{eq:31},
\[
  \overline R_{ikjl}\Rc_{ij}\Rc_{kl}-\frac{\overline R}{4}|\Rico|^2
  \ \le\ \frac34\Bigl(\overline R_{ijkl}\Rc_{ik}\Rc_{jl}
       -\overline R_{ij}\Rc_{ik}\Rc_{jk}\Bigr),
\]
which, after rearranging, is equivalent to
\[
  \overline R_{ikjl}\Rc_{ij}\Rc_{kl}
  \ \le\ \overline R\,|\Rico|^2-3\,\overline R_{ij}\Rc_{ik}\Rc_{jk} .
\]
Substituting \eqref{eq:modified-data} exactly as at the end of the proof of
Lemma \ref{lem:31} recovers the stated estimate.
\end{proof}
 
\begin{corollary}\label{cor:33}
Let $(M,g)$ be a four-dimensional manifold. If the 2nd-Ricci curvature satisfies
$\operatorname{Ric}_2(M)\ge2\varepsilon R$ for some $\varepsilon\in\mathbb R$, then
\[
  R_{ijkl}\Rc_{ik}\Rc_{jl}
  \ \le\ \frac{1-16\varepsilon}{4}\,R\,|\Rico|^2
   +(4s-3)\,\Rc_{ij}\Rc_{ik}\Rc_{jk}
\]
for every $s\in[0,1]$.
\end{corollary}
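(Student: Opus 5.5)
The plan is to obtain Corollary \ref{cor:33} as a convex combination of Lemma \ref{lem:31} and Lemma \ref{lem:32}. Both lemmas hold under the same hypothesis $\operatorname{Ric}_2(M)\ge 2\varepsilon R$. Both share the identical term $\frac{1-16\varepsilon}{4}R|\Rico|^2$ on the right-hand side, and they differ only in the coefficient of the cubic invariant $\Rc_{ij}\Rc_{ik}\Rc_{jk}$: it is $+1$ in Lemma \ref{lem:31} and $-3$ in Lemma \ref{lem:32}.

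Fix $s\in[0,1]$. The weights $s$ and $1-s$ are nonnegative, so I multiply the inequality of Lemma \ref{lem:31} by $s$ and that of Lemma \ref{lem:32} by $1-s$, and add. The left-hand side becomes $(s+(1-s))R_{ijkl}\Rc_{ik}\Rc_{jl}=R_{ijkl}\Rc_{ik}\Rc_{jl}$. The scalar-curvature term keeps its coefficient $\frac{1-16\varepsilon}{4}$. The cubic term acquires the coefficient
\[
s\cdot 1+(1-s)\cdot(-3)=4s-3.
\]
This is precisely the stated inequality. The endpoints $s=1$ and $s=0$ recover Lemma \ref{lem:31} and Lemma \ref{lem:32} respectively.

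There is essentially no obstacle once the two lemmas are available; the only point to watch is that $s\ge 0$ and $1-s\ge 0$, so that the inequalities are preserved under the scaling. The substantive work is therefore concentrated in Lemmas \ref{lem:31} and \ref{lem:32}, namely the nonnegativity of the weighted sums $I$ and $J$ via Lemmas \ref{lem:3A} and \ref{lem:3B}, which may be taken as given.

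The value of the corollary is that one can choose $s$ later to cancel or control the sign of $\Rc_{ij}\Rc_{ik}\Rc_{jk}$ when it is inserted into Lemma \ref{l-eqric}. For example, $s=3/4$ kills the cubic term entirely, giving $R_{ijkl}\Rc_{ik}\Rc_{jl}\le\frac{1-16\varepsilon}{4}R|\Rico|^2$.
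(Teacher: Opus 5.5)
Your proposal is correct and is exactly the paper's argument. You multiply the estimate with cubic coefficient $+1$ by $s\ge 0$ and the one with cubic coefficient $-3$ by $1-s\ge 0$, then add. The term $\frac{1-16\varepsilon}{4}R|\rd|^2$ is unchanged and the cubic coefficient becomes $s-3(1-s)=4s-3$.
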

 
\begin{proof}
Multiply the estimate of Lemma \ref{lem:31} by $s$, the estimate of Lemma
\ref{lem:32} by $1-s$, and add; the coefficient of
$\Rc_{ij}\Rc_{ik}\Rc_{jk}$ is $s\cdot1+(1-s)\cdot(-3)=4s-3$.
\end{proof}
 
  \subsection{Original}
\end{hidden}

\begin{lemma}\label{pest}
Let $(M,g)$ be a four-dimensional manifold. If the $2$nd-Ricci curvature
satisfies $Ric_2(M)\ge2\varepsilon R$ for some $\varepsilon\in\mathbb{R}$,
then
\[
R_{ijkl}{\R}_{ik}{\R}_{jl}\le\frac{1-16\varepsilon}{4}R|\rd|^2+\R_{ij}\R_{ik}\R_{jk}.
\]
\end{lemma}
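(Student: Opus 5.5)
Diagonalize the traceless Ricci tensor at a fixed point: choose an orthonormal frame $\{e_i\}_{i=1}^4$ with $\R_{ij}=\mu_i\delta_{ij}$ and $\sum_i\mu_i=0$. Write $K_{ij}=Rm(e_i,e_j,e_i,e_j)$. Then
\[
R_{ijkl}\R_{ik}\R_{jl}=\sum_{i,j}\mu_i\mu_jK_{ij},\qquad
\R_{ij}\R_{ik}\R_{jk}=\sum_k\mu_k^3 .
\]
Using $\mu_k R_{kk}=\mu_k(\mu_k+\tfrac R4)$ and $R_{kk}=\sum_{i\ne k}K_{ik}$, we get the key identity
\[
\sum_{i,j}\mu_i\mu_jK_{ij}-\sum_k\mu_k^2R_{kk}=-\sum_{i<j}(\mu_i-\mu_j)^2K_{ij}.
\]
Since $\sum_k\mu_k^2R_{kk}=\sum_k\mu_k^3+\frac R4|\rd|^2$, the lemma is equivalent to
\[
\sum_{i<j}(\mu_i-\mu_j)^2K_{ij}\ \ge\ 4\varepsilon R\,|\rd|^2 .
\]

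To exploit the hypothesis, shift the curvature. Set $\overline K_{ij}:=K_{ij}-\varepsilon R$, so the hypothesis $Ric_2\ge2\varepsilon R$ says $\overline K_{ij}+\overline K_{ik}\ge0$ whenever $j\ne k$ are both different from $i$. In other words, any two planes sharing an index have nonnegative $\overline K$-sum. Moreover, since $\sum_i\mu_i=0$,
\[
\sum_{i<j}(\mu_i-\mu_j)^2=4\sum_i\mu_i^2=4|\rd|^2 .
\]
Hence the desired inequality reduces to
\[
I:=\sum_{i<j}(\mu_i-\mu_j)^2\overline K_{ij}\ \ge\ 0 .
\]

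The combinatorial step, which I expect to be the main obstacle, is showing $I\ge0$ from the pairwise conditions. The six coordinate planes split into three opposite couples $\{12,34\},\{13,24\},\{14,23\}$. Two planes that are not opposite share an index, so their $\overline K$-values have nonnegative sum. Consequently at most one couple can contain negative values. If none does, $I\ge0$ trivially. Otherwise let that couple be $\{P_0,P_0^c\}$ and set $m=\max(-\overline K_{P_0},-\overline K_{P_0^c},0)$. Every other plane $Q$ meets both $P_0$ and $P_0^c$, so $\overline K_Q\ge m$. It then suffices that the coefficients $c_P=(\mu_i-\mu_j)^2$ satisfy
\[
\sum_{Q\notin\{P_0,P_0^c\}}c_Q\ \ge\ c_{P_0}+c_{P_0^c}
\]
for each couple.

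Order $\mu_1\le\mu_2\le\mu_3\le\mu_4$ and write $\alpha,\beta,\gamma\ge0$ for the consecutive gaps. Direct expansion shows the three required differences are $(\alpha+\gamma)^2$, $(\alpha-\gamma)^2$ and $(\alpha+2\beta+\gamma)^2$, all nonnegative. This gives $I\ge0$ and, after undoing the shift, the stated estimate.
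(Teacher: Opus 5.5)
Your proof is correct and follows essentially the paper's route. It uses the same shift $\overline K_{ij}=K_{ij}-\varepsilon R$, the same diagonal identity reducing the estimate to $I=\sum_{i<j}(\mu_i-\mu_j)^2\overline K_{ij}\ge 0$, and the same $m$-argument that the paper uses in its Cases 2 and 3, where it obtains $(x-y+z)^2m=(\alpha+\gamma)^2m$ and $(x-z+y)^2m=(\alpha-\gamma)^2m$. The difference is only organizational: you treat all three opposite couples uniformly, including $(\alpha+2\beta+\gamma)^2$ for $\{12,34\}$, which the paper handles in Case 1 by regrouping terms, and your version also covers the mirror-image sign configurations that the paper leaves implicit.
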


\begin{proof}[Proof of Lemma \ref{pest}]
Let $\{e_i\}$, $i=1,\ldots, 4$, be the eigenvectors of $\rd$ and let $\lambda_i$ be
the corresponding eigenvalues with $\lambda_1\le\lambda_2\le\lambda_3\le\lambda_4$.
Let $K_{ij}$ denote the sectional curvature of the two-plane spanned by $e_i$ and
$e_j$. Similar to the argument of \cite{[Cat]}, given the lower bound assumption
$Ric_2(M)\ge2\varepsilon R$, we introduce the modified curvature tensor
\[
\overline{Rm}=Rm-\frac{\varepsilon}{2}R\, g\circ g.
\]
In particular, its associated Ricci tensor, scalar curvature and sectional curvature satisfy
\[
\overline{Ric}=Ric-3\varepsilon R g\,,\quad\overline{R}=\big(1-12\varepsilon\big) R \quad\hbox{and}\quad \overline{K}_{ij}=K_{ij}-\varepsilon R.
\]
Furthermore, let $\mu_k$ and $\overline{\mu}_k$ denote the eigenvalues of $Ric$
and $\overline{Ric}$ corresponding to the common eigenvector $e_k$, respectively.
One has
\[
\mu_k=\sum_{i\neq k}K_{ik}\quad \mathrm{and}\quad
\overline{\mu}_k=\sum_{i\neq k} \overline{K}_{ik}.
\]
Denoting by $\overline{R}_{ijkl}$ the components of $\overline{Rm}$, we compute
\begin{equation}
\begin{aligned}\label{equno}
\overline{R}_{ijkl}\R_{ik}\R_{jl}-\overline{R}_{ij}\R_{ik}\R_{jk}&=\sum_{i,j=1}^4\lambda_{i}\lambda_j\overline{K}_{ij}-\sum_{k=1}^4\mu_k\lambda_k^2\\
&=2\sum_{i<j}\lambda_i\lambda_j\overline{K}_{ij}-\sum_{i<j}
\big(\lambda_i^2+\lambda_j^2\big)\overline{K}_{ij}\\
&=-\sum_{i<j}\big(\lambda_i-\lambda_j\big)^2\overline{K}_{ij}.
\end{aligned}
\end{equation}
We now establish the following \emph{claim}: if $Ric_2(M)\ge2\varepsilon R$, then
\[
I:=\sum_{i<j}\left(\lambda_i-\lambda_j\right)^2\overline{K}_{ij}\ge0.
\]
Indeed, the condition $Ric_2(M)\ge2\varepsilon R$ implies
\begin{equation}\label{crit}
\overline{K}_{ij}+\overline{K}_{ik}=K_{ij}+K_{ik}-2\varepsilon R\ge 0
\end{equation}
for any triple of indices $\{e_i, e_j, e_k\}$. In the following, we will
apply inequality \eqref{crit} to verify the claim.

Case 1: Suppose $\overline{K}_{12}\le0$. Then $\overline{K}_{13}, \overline{K}_{14}, \overline{K}_{23},\overline{K}_{24}\ge0$, while $\overline{K}_{34}\ge0$ or
$\overline{K}_{34}\le0$.

(a). If $\overline{K}_{12}\le0$, $\overline{K}_{13}, \overline{K}_{14}, \overline{K}_{23},\overline{K}_{24}\ge0$ and $\overline{K}_{34}\ge0$,
it is straightforward to check $I\ge0$.
Indeed, using $\lambda_1\le\lambda_2\le\lambda_3\le\lambda_4$ and \eqref{crit}, we compute
\begin{align*}
I:&=(\lambda_1-\lambda_2)^2\overline{K}_{12}+(\lambda_1-\lambda_3)^2\overline{K}_{13}
+(\lambda_1-\lambda_4)^2\overline{K}_{14}\\
&\quad+(\lambda_2-\lambda_3)^2\overline{K}_{23}+(\lambda_2-\lambda_4)^2\overline{K}_{24}
+(\lambda_3-\lambda_4)^2\overline{K}_{34}\\
&\ge(\lambda_1-\lambda_3)^2\big(\overline{K}_{12}+\overline{K}_{13}\big)
+(\lambda_1-\lambda_4)^2\overline{K}_{14}\\
&\quad+(\lambda_2-\lambda_3)^2\overline{K}_{23}+(\lambda_2-\lambda_4)^2\overline{K}_{24}
+(\lambda_3-\lambda_4)^2\overline{K}_{34}\\
&\ge0.
\end{align*}

(b). If $\overline{K}_{12}\le0$, $\overline{K}_{13}, \overline{K}_{14}, \overline{K}_{23},\overline{K}_{24}\ge0$ and $\overline{K}_{34}\le0$,
we combine $\lambda_1\le\lambda_2\le\lambda_3\le\lambda_4$ and \eqref{crit} to deduce
\begin{align*}
I:&=(\lambda_1-\lambda_2)^2\overline{K}_{12}+(\lambda_1-\lambda_3)^2\overline{K}_{13}
+(\lambda_1-\lambda_4)^2\overline{K}_{14}\\
&\quad+(\lambda_2-\lambda_3)^2\overline{K}_{23}+(\lambda_2-\lambda_4)^2\overline{K}_{24}
+(\lambda_3-\lambda_4)^2\overline{K}_{34}\\
&\ge(\lambda_1-\lambda_3)^2\overline{K}_{12}+(\lambda_1-\lambda_3)^2\overline{K}_{13}
+(\lambda_3-\lambda_4)^2\overline{K}_{14}\\
&\quad+(\lambda_2-\lambda_3)^2\overline{K}_{23}+(\lambda_2-\lambda_4)^2\overline{K}_{24}
+(\lambda_3-\lambda_4)^2\overline{K}_{34}\\
&=(\lambda_1-\lambda_3)^2\big(\overline{K}_{12}+\overline{K}_{13}\big)
+(\lambda_3-\lambda_4)^2\big(\overline{K}_{14}+\overline{K}_{34}\big)\\
&\quad+(\lambda_2-\lambda_3)^2\overline{K}_{23}+(\lambda_2-\lambda_4)^2\overline{K}_{24}\\
&\ge0.
\end{align*}

Case 2: Suppose $\overline{K}_{13}\le0$. Then $\overline{K}_{12}, \overline{K}_{14}, \overline{K}_{23},\overline{K}_{34}\ge0$. The critical subcase occurs when
 $\overline{K}_{24}\le0$; the case $\overline{K}_{24}\ge0$ is trivial.
In this setting, recall that
\begin{align*}
I:&=(\lambda_1-\lambda_2)^2\overline{K}_{12}+(\lambda_1-\lambda_3)^2\overline{K}_{13}
+(\lambda_1-\lambda_4)^2\overline{K}_{14}\\
&\quad+(\lambda_2-\lambda_3)^2\overline{K}_{23}+(\lambda_2-\lambda_4)^2\overline{K}_{24}
+(\lambda_3-\lambda_4)^2\overline{K}_{34}.
\end{align*}
To eliminate $\lambda_1$, applying $\lambda_1\le\lambda_2\le\lambda_3\le\lambda_4$, we introduce the following nonnegative ordered parameters
\[
x=\lambda_2-\lambda_1\ge 0,\quad y=\lambda_3-\lambda_1\ge x,\quad z=\lambda_4-\lambda_1\ge y.
\]
Substituting these into $I$,
\[
I=x^2\overline{K}_{12}+y^2\overline{K}_{13}+z^2\overline{K}_{14}
+(y-x)^2\overline{K}_{23}+(z-x)^2\overline{K}_{24}+(z-y)^2\overline{K}_{34}.
\]
Now we let
\[
m:=\max\left\{-\overline{K}_{13},-\overline{K}_{24}\right\}\ge0.
\]
Then $\overline{K}_{13},\overline{K}_{24}\ge-m$. The assumption
$\overline{K}_{ij}+\overline{K}_{ik}\ge 0$ implies
 $\overline{K}_{12},\overline{K}_{14}, \overline{K}_{23},\overline{K}_{34}\ge m$.
So
\begin{align*}
I&=y^2\overline{K}_{13}+(z-x)^2\overline{K}_{24}+x^2\overline{K}_{12}+z^2\overline{K}_{14}
+(y-x)^2\overline{K}_{23}+(z-y)^2\overline{K}_{34}\\
&\ge\big[-y^2-(z-x)^2+x^2+z^2+(y-x)^2+(z-y)^2\big]m\\
&=(x-y+z)^2m\\
&\ge0.
\end{align*}

Case 3: Suppose $\overline{K}_{14}\le0$. Then $\overline{K}_{12}, \overline{K}_{13}, \overline{K}_{24},\overline{K}_{34}\ge0$. The worst-case scenario is
$\overline{K}_{23}\le0$. For this case, similar to the argument of
Case 2, using $\lambda_1\le\lambda_2\le\lambda_3\le\lambda_4$, we introduce 
nonnegative ordered parameters
\[
x=\lambda_2-\lambda_1\ge 0,\quad y=\lambda_3-\lambda_1\ge x,\quad z=\lambda_4-\lambda_1\ge y.
\]
Let
\[
m:=\max\left\{-\overline{K}_{14},-\overline{K}_{23}\right\}\ge0.
\]
Then $\overline{K}_{14},\overline{K}_{23}\ge-m$. The assumption
$\overline{K}_{ij}+\overline{K}_{ik}\ge 0$ implies
 $\overline{K}_{12},\overline{K}_{13}, \overline{K}_{24},\overline{K}_{34}\ge m$.
Putting these information together, we have
\begin{align*}
I:&=(\lambda_1-\lambda_2)^2\overline{K}_{12}+(\lambda_1-\lambda_3)^2\overline{K}_{13}
+(\lambda_1-\lambda_4)^2\overline{K}_{14}\\
&\quad+(\lambda_2-\lambda_3)^2\overline{K}_{23}+(\lambda_2-\lambda_4)^2\overline{K}_{24}
+(\lambda_3-\lambda_4)^2\overline{K}_{34}\\
&=z^2\overline{K}_{14}+(y-x)^2\overline{K}_{23}+x^2\overline{K}_{12}+y^2\overline{K}_{13} 
+(z-x)^2\overline{K}_{24}+(z-y)^2\overline{K}_{34}\\
&\ge\big[-z^2-(y-x)^2+x^2+y^2+(z-x)^2+(z-y)^2\big]m\\
&=(x-z+y)^2m\\
&\ge0.
\end{align*}

Combining all cases above, we conclude that $I\ge 0$ whenever $Ric_2(M)\ge2\varepsilon R$,
which completes the proof of the claim.

By the definitions of $\overline{Rm}$ and $\overline{Ric}$, we rearrange the inequality
to get
\begin{align*}
R_{ijkl}\R_{ik}\R_{jl}+\varepsilon R|\rd|^2&\le
R_{ij}\R_{ik}\R_{jk}-3\varepsilon R|\rd|^2\\
&=\R_{ij}\R_{ik}\R_{jk}+\frac{1-12\varepsilon}{4} R|\rd|^2,
\end{align*}
which proves the desired estimate.
\end{proof}

Motivated by the second part of Proposition 2.1 in \cite{[Cat]}, we derive
another algebraic curvature estimate.
\begin{lemma}\label{pest2}
Let $(M,g)$ be a four-dimensional manifold. If the $2$nd-Ricci curvature
satisfies $Ric_2(M)\ge2\varepsilon R$ for some $\varepsilon\in\mathbb{R}$,
then
\[
R_{ijkl}{\R}_{ik}{\R}_{jl}\le\frac{1-16\varepsilon}{4}R|\rd|^2-3\R_{ij}\R_{ik}\R_{jk}.
\]
\end{lemma}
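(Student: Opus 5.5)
We work in the same eigenframe as in the proof of Lemma \ref{pest}. Let $\{e_i\}$ diagonalize $\rd$ with eigenvalues $\lambda_1\le\lambda_2\le\lambda_3\le\lambda_4$, $\sum_k\lambda_k=0$, and introduce the modified tensor $\overline{Rm}=Rm-\frac{\varepsilon}{2}R\,g\circ g$. Its sectional curvatures are $\overline K_{ij}=K_{ij}-\varepsilon R$, so \eqref{crit} says $\overline K_{ij}+\overline K_{ik}\ge0$ for distinct $i,j,k$. In this frame
\[
\overline R_{ijkl}\R_{ik}\R_{jl}-\frac{\overline R}{4}|\rd|^2=\sum_{i<j}\Big(2\lambda_i\lambda_j-\frac12\sum_k\lambda_k^2\Big)\overline K_{ij},
\]
using $\overline R=2\sum_{i<j}\overline K_{ij}$.

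The plan is to write each coefficient as a combination of $(\lambda_i-\lambda_j)^2$ and of the squared gap of the complementary pair $\{\bar i,\bar j\}=\{1,2,3,4\}\setminus\{i,j\}$. Since $\lambda_i+\lambda_j=-(\lambda_{\bar i}+\lambda_{\bar j})$, one checks
\[
\sum_k\lambda_k^2-\frac32(\lambda_i^2+\lambda_j^2)-\lambda_i\lambda_j=\frac12(\lambda_{\bar i}-\lambda_{\bar j})^2 .
\]
This gives
\[
2\lambda_i\lambda_j-\frac12\sum_k\lambda_k^2=-\frac14(\lambda_{\bar i}-\lambda_{\bar j})^2-\frac34(\lambda_i-\lambda_j)^2 .
\]
Hence the left side above equals $-\frac14 J'-\frac34 I$, where $I=\sum_{i<j}(\lambda_i-\lambda_j)^2\overline K_{ij}$ and $J'=\sum_{i<j}(\lambda_{\bar i}-\lambda_{\bar j})^2\overline K_{ij}$.

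By \eqref{equno}, $-I=\overline R_{ijkl}\R_{ik}\R_{jl}-\overline R_{ij}\R_{ik}\R_{jk}$. Substituting gives
\[
\frac14\overline R_{ijkl}\R_{ik}\R_{jl}\le\frac{\overline R}{4}|\rd|^2-\frac34\overline R_{ij}\R_{ik}\R_{jk}-\frac14 J' .
\]
So if $J'\ge0$, then $\overline R_{ijkl}\R_{ik}\R_{jl}\le\overline R|\rd|^2-3\overline R_{ij}\R_{ik}\R_{jk}$.

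Converting back to $Rm$ uses three identities, as at the end of Lemma \ref{pest}:
\begin{itemize}
\item $\overline R_{ijkl}\R_{ik}\R_{jl}=R_{ijkl}\R_{ik}\R_{jl}+\varepsilon R|\rd|^2$;
\item $\overline R_{ij}\R_{ik}\R_{jk}=\R_{ij}\R_{ik}\R_{jk}+\big(\frac14-3\varepsilon\big)R|\rd|^2$;
\item $\overline R=(1-12\varepsilon)R$.
\end{itemize}
These yield
\[
R_{ijkl}\R_{ik}\R_{jl}\le\Big(1-12\varepsilon-\frac34+9\varepsilon-\varepsilon\Big)R|\rd|^2-3\R_{ij}\R_{ik}\R_{jk}=\frac{1-16\varepsilon}{4}R|\rd|^2-3\R_{ij}\R_{ik}\R_{jk},
\]
which is the claim.

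The main obstacle is proving $J'\ge0$ under \eqref{crit}. I would redo the case analysis of Lemma \ref{pest} with the coefficients permuted by complementation $\{i,j\}\mapsto\{\bar i,\bar j\}$. This map fixes each opposite couple $\{12,34\},\{13,24\},\{14,23\}$ setwise. By \eqref{crit}, at most one opposite couple can carry negative $\overline K$'s. If it does, set $m=\max\{-\overline K_P,-\overline K_{P^c},0\}$; then $\overline K_Q\ge m$ for the other four planes $Q$, since each meets both members of the couple. The estimate then reduces to the coefficient of the couple being dominated by the sum of the other four coefficients. With $\alpha=\lambda_2-\lambda_1$, $\beta=\lambda_3-\lambda_2$, $\gamma=\lambda_4-\lambda_3$, the three required inequalities are
\begin{align*}
d_{12}+d_{23}+d_{34}+d_{14}-d_{13}-d_{24}&=(\alpha+\gamma)^2,\\
d_{12}+d_{13}+d_{24}+d_{34}-d_{14}-d_{23}&=(\alpha-\gamma)^2,\\
d_{13}+d_{14}+d_{23}+d_{24}-d_{12}-d_{34}&=(\alpha+2\beta+\gamma)^2,
\end{align*}
where $d_{ij}=(\lambda_i-\lambda_j)^2$. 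All right-hand sides are nonnegative. These are the same combinations as for $I$, because complementation preserves each couple. So $J'\ge\min(\text{these})\cdot m\ge0$, completing the argument.
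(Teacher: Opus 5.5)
Your proposal is correct and follows the paper's proof. It uses the same modified tensor $\overline{Rm}$ and the same splitting $2\lambda_i\lambda_j-\frac12\sum_k\lambda_k^2=-\frac14(\lambda_{\bar i}-\lambda_{\bar j})^2-\frac34(\lambda_i-\lambda_j)^2$ (your $J'$ is exactly $2J$ in the paper's claim), followed by the same conversion back to $Rm$. The only difference is that you prove $J'\ge0$ by one uniform argument: the negative $\overline K$'s lie in a single opposite couple, they are dominated via $m$, and the three square identities from $I$ are reused because complementation preserves couples. This replaces the paper's Cases 1--3, does not need the ordering of the $\lambda_i$, and covers sign patterns (e.g.\ only $\overline K_{34}<0$) that the paper's case list leaves implicit.
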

\begin{proof}[Proof of Lemma \ref{pest2}]
We adopt the same notation introduced in the proof of Lemma \ref{pest}.
One readily verifies that
\begin{equation}\label{curiden}
\overline{R}_{ikjl}\R_{ij}\R_{kl}-\frac{\overline{R}}{4}|\rd|^2
=\sum_{i,j=1}^4\lambda_i\lambda_j\overline{K}_{ij}-\frac{\overline{R}}{4}\sum_{k=1}^4\lambda_k^2.
\end{equation}
Recall the modified scalar curvature $\overline{R}$ admits the expression
\[
\overline{R}=g^{ik}g^{jl}\overline{R}_{ijkl}=\sum^4_{i,j=1}\overline{K}_{ij}=2\sum_{i<j}\overline{K}_{ij}.
\]
Substituting this identity yields
\begin{equation}
\begin{aligned}\label{imprest}
\sum_{i,j=1}^4\lambda_i\lambda_j\overline{K}_{ij}-\frac{\overline{R}}{4}\sum_{k=1}^4\lambda_k^2
&=2\sum_{i<j}\lambda_i\lambda_j\overline{K}_{ij}-\frac{1}{2}\sum_{i<j}\overline{K}_{ij}\sum_{k=1}^4\lambda_k^2\\
&=\sum_{i<j}\left(2\lambda_i\lambda_j-\frac{1}{2}\sum_{k=1}^4\lambda_k^2\right)\overline{K}_{ij}.
\end{aligned}
\end{equation}
We now establish the following \emph{claim}: if $Ric_2(M)\ge2\varepsilon R$, then
\[
J:=\sum_{k=1}^4\lambda_k^2\cdot\sum_{i<j}\overline{K}_{ij}
-\sum_{i<j}\left[\frac{3}{2}(\lambda_i^2+\lambda_j^2)+\lambda_j\lambda_j\right]\overline{K}_{ij}\ge0.
\]

Indeed, we utilize the identity $\sum_{k=1}^4\lambda_k=0$ to rearrange
the expression for $J$:
\begin{equation*}
\begin{aligned}
J&=\sum_{i<j}\left[\sum_{k=1}^4\lambda_k^2-\frac{3}{2}(\lambda_i^2+\lambda_j^2)
-\lambda_i\lambda_j\right]\overline{K}_{ij}\\
&=\sum_{i<j}\left[(\lambda_{\bar{i}}^2+\lambda_{\bar{j}}^2)-\frac{1}{2}(\lambda_i^2+\lambda_j^2)
-\lambda_i\lambda_j\right]\overline{K}_{ij}\\
&=\sum_{i<j}\left[(\lambda_{\bar{i}}^2+\lambda_{\bar{j}}^2)-\frac{1}{2}(\lambda_i+\lambda_j)^2
\right]\overline{K}_{ij}\\
&=\sum_{i<j}\left[(\lambda_{\bar{i}}^2+\lambda_{\bar{j}}^2)-\frac{1}{2}(\lambda_{\bar{i}}+\lambda_{\bar{j}})^2
\right]\overline{K}_{ij}\\
&=\frac{1}{2}\sum_{i<j}(\lambda_{\bar{i}}-\lambda_{\bar{j}})^2\overline{K}_{ij},
\end{aligned}
\end{equation*}
where $\bar{i},\bar{j}\in\{1,2,3,4\}\backslash \{i,j\}$ and $\bar{i}<\bar{j}$.
Since the condition $Ric_2(M)\ge2\varepsilon R$ implies inequality \eqref{crit},
we apply \eqref{crit} and repeat the three-case argument from the proof of Lemma \ref{pest} to verify the claim.

Case 1: Suppose $\overline{K}_{12}\le0$. Then $\overline{K}_{13}, \overline{K}_{14}, \overline{K}_{23},\overline{K}_{24}\ge0$, while $\overline{K}_{34}\ge0$ or
$\overline{K}_{34}\le0$.

(a). If $\overline{K}_{12}\le0$, $\overline{K}_{13}, \overline{K}_{14}, \overline{K}_{23},\overline{K}_{24}\ge0$ and $\overline{K}_{34}\ge0$, we can deduce $J\ge0$.
Indeed, using $\lambda_1\le\lambda_2\le\lambda_3\le\lambda_4$ and \eqref{crit}, we compute
\begin{align*}
2J:&=(\lambda_3-\lambda_4)^2\overline{K}_{12}+(\lambda_2-\lambda_4)^2\overline{K}_{13}
+(\lambda_2-\lambda_3)^2\overline{K}_{14}\\
&\quad+(\lambda_1-\lambda_4)^2\overline{K}_{23}+(\lambda_1-\lambda_3)^2\overline{K}_{24}
+(\lambda_1-\lambda_2)^2\overline{K}_{34}\\
&\ge(\lambda_3-\lambda_4)^2(\overline{K}_{12}+\overline{K}_{13})
+(\lambda_2-\lambda_3)^2\overline{K}_{14}\\
&\quad+(\lambda_1-\lambda_4)^2\overline{K}_{23}+(\lambda_1-\lambda_3)^2\overline{K}_{24}
+(\lambda_1-\lambda_2)^2\overline{K}_{34}\\
&\ge0.
\end{align*}

(b). If $\overline{K}_{12}\le0$, $\overline{K}_{13}, \overline{K}_{14}, \overline{K}_{23},\overline{K}_{24}\ge0$ and $\overline{K}_{34}\le0$,
combining $\lambda_1\le\lambda_2\le\lambda_3\le\lambda_4$ and \eqref{crit} gives
\begin{align*}
2J:&=(\lambda_3-\lambda_4)^2\overline{K}_{12}+(\lambda_2-\lambda_4)^2\overline{K}_{13}
+(\lambda_2-\lambda_3)^2\overline{K}_{14}\\
&\quad+(\lambda_1-\lambda_4)^2\overline{K}_{23}+(\lambda_1-\lambda_3)^2\overline{K}_{24}
+(\lambda_1-\lambda_2)^2\overline{K}_{34}\\
&\ge(\lambda_3-\lambda_4)^2(\overline{K}_{12}+\overline{K}_{13})
+(\lambda_2-\lambda_3)^2\overline{K}_{14}\\
&\quad+(\lambda_1-\lambda_4)^2\overline{K}_{23}
+(\lambda_1-\lambda_2)^2(\overline{K}_{24}+\overline{K}_{34})\\
&\ge0.
\end{align*}

Case 2: Suppose $\overline{K}_{13}\le0$. Then $\overline{K}_{12}, \overline{K}_{14}, \overline{K}_{23},\overline{K}_{34}\ge0$, while $\overline{K}_{24}\ge0$ or
$\overline{K}_{24}\le0$. We only analyze the critical subcase $\overline{K}_{24}\le0$.
Similar to the argument of Lemma \ref{pest}: Case 2, applying $\lambda_1\le\lambda_2\le\lambda_3\le\lambda_4$, we introduce the 
following nonnegative ordered parameters
\[
x=\lambda_2-\lambda_1\ge 0,\quad y=\lambda_3-\lambda_1\ge x,\quad z=\lambda_4-\lambda_1\ge y.
\]
Let
\[
m:=\max\left\{-\overline{K}_{13},-\overline{K}_{24}\right\}\ge0.
\]
Then $\overline{K}_{13},\overline{K}_{24}\ge-m$. The assumption
$\overline{K}_{ij}+\overline{K}_{ik}\ge 0$ implies
 $\overline{K}_{12},\overline{K}_{14}, \overline{K}_{23},\overline{K}_{34}\ge m$.
Putting these information together, we have
\begin{align*}
2J:&=(\lambda_3-\lambda_4)^2\overline{K}_{12}+(\lambda_2-\lambda_4)^2\overline{K}_{13}
+(\lambda_2-\lambda_3)^2\overline{K}_{14}\\
&\quad+(\lambda_1-\lambda_4)^2\overline{K}_{23}+(\lambda_1-\lambda_3)^2\overline{K}_{24}
+(\lambda_1-\lambda_2)^2\overline{K}_{34}\\
&=(z-y)^2\overline{K}_{12}+(z-x)^2\overline{K}_{13}
+(y-x)^2\overline{K}_{14}+z^2\overline{K}_{23}+y^2\overline{K}_{24}+x^2\overline{K}_{34}\\
&\ge\big[-(z-x)^2-y^2+(z-y)^2+(y-x)^2+z^2+x^2\big]m\\
&=(x-y+z)^2m\\
&\ge0.
\end{align*}

Case 3: Suppose $\overline{K}_{14}\le0$. Then $\overline{K}_{12}, \overline{K}_{13}, \overline{K}_{24},\overline{K}_{34}\ge0$. The worst-case scenario is $\overline{K}_{23}\le0$.
In this setting, similar to the above argument, we still have $2J\ge0$.

Combining all cases above, we conclude $J\ge 0$ whenever $Ric_2(M)\ge2\varepsilon R$,
which completes the proof of the claim.

We now proceed to complete the proof of our result. Using the claim established above,
we further estimate expression \eqref{imprest} as
\begin{equation*}
\begin{aligned}
\sum_{i,j=1}^n\lambda_i\lambda_j\overline{K}_{ij}-\frac{\overline{R}}{4}\sum_{k=1}^4\lambda_k^2
\le-\frac 34\sum_{i<j}(\lambda_i-\lambda_j)^2\overline{K}_{ij}.
\end{aligned}
\end{equation*}
Substituting this into identity \eqref{equno} yields
\[
\sum_{i,j=1}^n\lambda_i\lambda_j\overline{K}_{ij}-\frac{\overline{R}}{4}\sum_{k=1}^4\lambda_k^2
\le\frac 34\left(\overline{R}_{ijkl}\R_{ik}\R_{jl}-\overline{R}_{ij}\R_{ik}\R_{jk}\right).
\]
Combining this estimate with \eqref{curiden}, we have
\[
\overline{R}_{ikjl}\R_{ij}\R_{kl}-\frac{\overline{R}}{4}|\rd|^2\le\frac 34\left(\overline{R}_{ijkl}\R_{ik}\R_{jl}-\overline{R}_{ij}\R_{ik}\R_{jk}\right).
\]
Equivalently,
\[
\overline{R}_{ikjl}\R_{ij}\R_{kl}\le\overline{R}|\rd|^2-3\overline{R}_{ij}\R_{ik}\R_{jk}.
\]
Substituting the definitions of $\overline{Rm}$, $\overline{Ric}$
and $\overline{R}$ into the above inequality recovers the desired estimate.
\end{proof}

Taking a convex linear combination of the bounds from Lemma \ref{pest} and Lemma \ref{pest2}
immediately produces the following uniform estimate, which will be essential for the proof of
Theorem \ref{Rig1}.
\begin{corollary}\label{con-com}
Let $(M,g)$ be a four-dimensional manifold. If the $2$nd-Ricci curvature
satisfies $Ric_2(M)\ge2\varepsilon R$ for some $\varepsilon\in\mathbb{R}$,
then
\[
R_{ijkl}{\R}_{ik}{\R}_{jl}\le\frac{1-16\varepsilon}{4}R|\rd|^2-(3-4s)\R_{ij}\R_{ik}\R_{jk}
\]
for every $s\in [0,1]$.
\end{corollary}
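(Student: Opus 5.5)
The plan is to take the convex combination of Lemma \ref{pest} and Lemma \ref{pest2} directly. Both lemmas are valid under the same hypothesis $Ric_2(M)\ge2\varepsilon R$, and both have the same leading term $\frac{1-16\varepsilon}{4}R|\rd|^2$ on the right-hand side. They differ only in the coefficient of the cubic term $\R_{ij}\R_{ik}\R_{jk}$, which is $+1$ in Lemma \ref{pest} and $-3$ in Lemma \ref{pest2}. Since the left-hand side $R_{ijkl}\R_{ik}\R_{jl}$ is the same quantity in both, any convex combination of the two inequalities is again a valid inequality.

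Concretely, fix $s\in[0,1]$. Multiply the inequality of Lemma \ref{pest} by $s\ge0$ and the inequality of Lemma \ref{pest2} by $1-s\ge0$; nonnegativity of the weights preserves the direction of the inequalities. Then add the two. The left-hand side becomes $(s+1-s)R_{ijkl}\R_{ik}\R_{jl}=R_{ijkl}\R_{ik}\R_{jl}$, and the scalar-curvature term keeps the coefficient $\frac{1-16\varepsilon}{4}$. The cubic term acquires the coefficient
\[
s\cdot 1+(1-s)(-3)=4s-3=-(3-4s),
\]
which gives exactly the stated bound
\[
R_{ijkl}\R_{ik}\R_{jl}\le\frac{1-16\varepsilon}{4}R|\rd|^2-(3-4s)\R_{ij}\R_{ik}\R_{jk}.
\]

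There is no genuine obstacle. The only point to check is that the two lemmas are applied at the same point with the same $\varepsilon$, which is immediate since the hypothesis is pointwise and identical in both. The restriction $s\in[0,1]$ is needed precisely so that both weights are nonnegative. The freedom in $s$ is what will later allow the sign of the cubic term $\R_{ij}\R_{ik}\R_{jk}$ to be handled in the proof of Theorem \ref{Rig1}.
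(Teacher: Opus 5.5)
Your proposal is correct and is exactly the paper's argument. The paper also takes the convex combination of Lemma \ref{pest} (weight $s$) and Lemma \ref{pest2} (weight $1-s$), which gives the cubic coefficient $s-3(1-s)=4s-3=-(3-4s)$ while the term $\frac{1-16\varepsilon}{4}R|\rd|^2$ is unchanged.
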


We now apply Corollary \ref{con-com} to prove Theorem \ref{Rig1} by following
the argument of \cite{[Cat]}.
\begin{proof}[Proof of Theorem \ref{Rig1}]
From \cite{[Chen],{PRS}}, we know that either $R\equiv 0$ or $R>0$. If $R\equiv 0$,
the result of \cite{PRS} guarantees that $(M,g,f)$ is isometric to $\RR^4$
with the standard Euclidean metric. For the remainder of the proof, we assume
$R>0$ on $M$. The condition $Ric_2(M)\ge2\varepsilon R>0$ then forces $M^4$
to be closed by \cite{LN} (see also \cite{WW}).

We integrate the evolution identity from Lemma \ref{l-eqric}
over the closed shrinker $(M,g,f)$ and get that
\begin{equation}\label{ident}
\begin{aligned}
0&=\frac{1}{2}\int_M\langle \nabla |\rd|^2,\nabla f\rangle dv+\int_M|\nabla\rd|^2 dv
+2\lambda\int_M |\rd|^2dv\\
&\quad-2\int_M R_{ijkl}\R_{ik}\R_{jl} dv-\frac{1}{2}\int_M R|\rd|^2 dv\\
&=\int_M\left(-\frac{1}{2}\Delta f+2\lambda-\frac{R}{2}\right)|\rd|^2dv
+\int_M|\nabla\rd|^2 dv-2\int_M R_{ijkl}\R_{ik}\R_{jl} dv\\
&=\int_M|\nabla \rd|^2dv-2\int_M R_{ijkl}\R_{ik}\R_{jl} dv,
\end{aligned}
\end{equation}
where in the last equality we used the identity $R+\Delta f=4\lambda$ on
four-dimensional shrinkers.

On the other hand, set
\[
F_{ijk}:=\nabla_k{\R}_{ij}+\nabla_j{\R}_{ik}+\nabla_i{\R}_{jk}
-\frac{1}{12}\left(\nabla_k R g_{ij}+\nabla_j R g_{ik}+\nabla_i R g_{jk}\right).
\]
Using the contracted Bianchi identity $\nabla_i\R_{ij}=\frac 14\nabla_j R$, we directly
compute that
\[
|F|^2=3|\nabla \rd|^2+ 6\nabla_k\R_{ij}\nabla_j\R_{ik}-\frac{1}{8}|\nabla R|^2.
\]
Integrating both sides of this identity over $M$ yields
\begin{equation}\label{idtint}
\begin{aligned}
\int_M|\nabla \rd|^2dv&=\frac{1}{3}\int_M|F|^2dv-2\int_M\nabla_k \R_{ij}\nabla_j\R_{ik}dv
+\frac{1}{24}\int_M|\nabla R|^2dv\\
&=\frac{1}{3}\int_M|F|^2dv-2\int_M\nabla_k \R_{ij}\nabla_j\R_{ik}dv
+\frac{1}{12}\int_MR|\rd|^2dv,
\end{aligned}
\end{equation}
where we used Lemma \ref{l-id} in the second equality.
Applying integration by parts, commutation of covariant derivatives and the Bianchi relation $\nabla_i\R_{ij}=\frac{1}{4}\nabla_j R$, we rewrite the cross term
\begin{equation*}
\begin{aligned}
-\int_M\nabla_k\R_{ij}\nabla_j\R_{ik}dv&=\int_M\R_{ij} \nabla_k\nabla_j\R_{ik} dv\\
&=\int_M\Big(\R_{ij}\nabla_j\nabla_k \R_{ik}+R_{kjil}\R_{ij}\R_{kl}+R_{ij}\R_{ik}\R_{jk}\Big)dv \\
&=\int_M\left(\frac{1}{4}\R_{ij}\nabla_i\nabla_j R-R_{ijkl}\R_{ik}\R_{jl}+\R_{ij}\R_{ik}\R_{jk}+ \frac{1}{4} R|\rd|^2\right) dv \\
&=\int_M\left(-\frac{1}{16}|\nabla R|^2-R_{ijkl}\R_{ik}\R_{jl}+\R_{ij}\R_{ik}\R_{jk}+\frac{1}{4} R|\rd|^2\right) dv \\
&=\int_M\left(-R_{ijkl}\R_{ik}\R_{jl}+\R_{ij}\R_{ik}\R_{jk}+\frac{1}{8}R|\rd|^2\right) dv,
\end{aligned}
\end{equation*}
where we used Lemma \ref{l-id} once more in the last equality. Substituting this into
\eqref{idtint}, we get
\[
\int_M|\nabla \rd|^2dv=\frac{1}{3}\int_M|F|^2dv-2\int_MR_{ijkl}\R_{ik}\R_{jl}dv
+2\int_M{\R}_{ij}{\R}_{ik}{\R}_{jk}dv+\frac 13\int_MR|{\overset{\circ}Ric}|^2dv.
\]
Then substituting this into \eqref{ident} gives that
\[
0=\frac{1}{3}\int_M|F|^{2}dv-4\int_MR_{ijkl}{\R}_{ik}{\R}_{jl}dv
+2\int_M{\R}_{ij}{\R}_{ik}{\R}_{jk}dv+\frac 13\int_MR|{\overset{\circ}Ric}|^2dv.
\]
Now recall the condition $Ric_2(M)\ge2\varepsilon R$. Corollary \ref{con-com} gives
that
\[
-4R_{ijkl}{\R}_{ik}{\R}_{jl}\ge(16\varepsilon-1) R|\rd|^2+(12-16s)\R_{ij}\R_{ik}\R_{jk}
\]
for every $s\in [0,1]$. Inserting this inequality yields
\begin{equation}\label{FRic} 
0\ge\frac{1}{3}\int_M|F|^2dv+(14-16s)\int_M{\R}_{ij}{\R}_{ik}{\R}_{jk}dv
+\left(16\varepsilon-\frac 23\right)\int_MR|\rd|^2dv.    
\end{equation}
Taking $s=7/8$, \eqref{FRic} simplifies to
\[
0\ge\frac{1}{3}\int_M|F|^2dv+\left(16\varepsilon-\frac 23\right)\int_MR|\rd|^2dv.
\]
Since $\varepsilon>\frac{1}{24}$, then $16\varepsilon-\frac 23>0$ and we have $\rd=0$.
So $(M,g,f)$ is an Einstein manifold. For such an Einstein shrinker,
the lower bound $Ric_2(M)\ge2\varepsilon R$ for some $\varepsilon>\frac{1}{24}$
implies $Ric_2(M)>\frac{1}{12}R$. This satisfies the pinching condition
\eqref{rigcond}, and we conclude that $(M,g, f)$ is isometric to either
$\SS^4$ or $\mathbb{CP}^2$ equipped with its standard metric, as discussed
in the introduction.
\end{proof}

\begin{remark}
For the critical case $\varepsilon=\frac1{24}$,
\eqref{FRic} implies 
\begin{equation}
	\begin{aligned}\label{equacase}
		0&\ge\frac{1}{3}\int_M|F|^2dv+(14-16s)\int_M{\R}_{ij}{\R}_{ik}{\R}_{jk}dv
		+\left(16\varepsilon-\frac 23\right)\int_MR|\rd|^2dv\\
		&=\frac{1}{3}\int_M|F|^2dv+(14-16s)\int_M{\R}_{ij}{\R}_{ik}{\R}_{jk}dv 
	\end{aligned}
\end{equation}	 
 for every $s\in [0,1]$.  Since $s\in [0,1]$ is arbitrary, 
 then $F\equiv 0$ and $\int_M{\R}_{ij}{\R}_{ik}{\R}_{jk}dv\equiv 0$. This
forces \eqref{equacase} to become equality. Thus, the inequality of Corollary \ref{con-com} becomes equality. Namely,
\[
R_{ijkl}{\R}_{ik}{\R}_{jl}\equiv\frac{R}{12} |\rd|^2,\quad {\R}_{ij}{\R}_{ik}{\R}_{jk}\equiv 0.
\]
Moreover the equality of Corollary \ref{con-com} implies $I\equiv J\equiv 0$ in the proof of Lemmas \ref{pest} and \ref{pest2}. But to the authors, these equalties seem not to imply $\rd=0$.
\end{remark}


\section{Integral pinching estimates for curvature}\label{prorig2}
In this section, we first establish a key lemma, which will be used in the proof
of Theorem \ref{Rigid2}.
\begin{lemma}\label{recond}
Let $(M,g)$ be a closed four-dimensional shrinker, and let $\lambda_1(M)$
be its first eigenvalue of the Laplacian on functions on $M$. If one of the
following conditions holds:
\begin{eqnarray}\label{listcondK}
& K^\perp\ge\frac{(1-16k^2)R^2+8kR\lambda_1(M)}{24[(1-4k)R+\lambda_1(M)]},\label{tiji1}\\
&K^\perp_{min}\ge\frac{(1-16k^2)R+8k\lambda_1(M)}{4(1-4k)(1-2k)R+2(3-8k)\lambda_1(M)}K^\perp_{max},\label{tiji2}\\
& Ric\ge a \quad \text{and} \quad K^\perp\le\frac{2(1-4k)(1-2k)+1-\tfrac{8}{3}k}{4(1-3k)}a \quad \text{for some constant}\,\,\, a>0,\label{tiji3}
\end{eqnarray}
where $0\le k\le1/4$, then
\[
K^\perp\le\frac{2(1-4k)(1-2k)R^2+(3-8k)\lambda_1(M)R}{12[(1-4k)R+\lambda_1(M)]}.
\]
\end{lemma}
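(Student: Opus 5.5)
The plan is to reduce all three conditions to the algebraic identity \eqref{scalarandKperp}. Fix a plane $P=\mathrm{span}\{e_1,e_2\}$ and complete it to an orthonormal basis. Then $P$ together with the planes $\mathrm{span}\{e_1,e_3\}$ and $\mathrm{span}\{e_1,e_4\}$ has biorthogonal curvatures summing to $R/4$. Hence
\[
K^\perp(P)\le \frac R4-2K^\perp_{min}\quad\text{for every plane }P,\qquad\text{in particular}\qquad K^\perp_{max}\le\frac R4-2K^\perp_{min}.
\]
Throughout, write $D=(1-4k)R+\lambda_1(M)$ and
\[
U=\frac{2(1-4k)(1-2k)R^2+(3-8k)\lambda_1(M)R}{12D}
\]
for the target bound. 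Note that $D>0$ since $k\le 1/4$, $R\ge0$ and $\lambda_1(M)>0$.

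\emph{Condition \eqref{tiji1}.} The lower bound $L$ on $K^\perp$ also bounds $K^\perp_{min}$, so the inequality above gives $K^\perp\le R/4-2L$. Put everything over the common denominator $12D$. Then
\[
\frac R4-2L=\frac{\bigl(3(1-4k)-(1-16k^2)\bigr)R^2+(3-8k)\lambda_1(M)R}{12D}.
\]
Since $3(1-4k)-(1-16k^2)=2-12k+16k^2=2(1-4k)(1-2k)$, this is exactly $U$.

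\emph{Condition \eqref{tiji2}.} Write the hypothesis as $K^\perp_{min}\ge cK^\perp_{max}$ with $c=N/D'$, where
\[
N=(1-16k^2)R+8k\lambda_1(M),\qquad D'=4(1-4k)(1-2k)R+2(3-8k)\lambda_1(M).
\]
Both are nonnegative. Then $K^\perp_{max}\le R/4-2cK^\perp_{max}$, so $K^\perp_{max}\le R/\bigl(4(1+2c)\bigr)$. A direct expansion gives $D'+2N=6(1-4k)R+6\lambda_1(M)=6D$, so $1+2c=6D/D'$. Therefore $K^\perp\le K^\perp_{max}\le RD'/(24D)=U$.

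\emph{Condition \eqref{tiji3}.} Here $Ric\ge a>0$ gives $R\ge4a$, and the Lichnerowicz estimate on the closed manifold gives $\lambda_1(M)\ge\frac43a$. Regard $U=U(R,\lambda_1)$ as a function of two variables, with $A=2(1-4k)(1-2k)$, $B=3-8k$ and $C=1-4k$. I would check that $U$ is nondecreasing in each variable.
\begin{itemize}
\item In $\lambda_1$: the sign of $\partial_{\lambda_1}U$ is that of $R(BC-A)=R(1-4k)^2\ge0$.
\item In $R$: the sign of $\partial_R U$ is that of $(2AR+B\lambda_1)(CR+\lambda_1)-CR(AR+B\lambda_1)=ACR^2+2AR\lambda_1+B\lambda_1^2\ge0$, since $A,B,C\ge0$.
\end{itemize}
Hence $U(R,\lambda_1)\ge U\bigl(4a,\frac43a\bigr)$. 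Evaluating,
\[
U\Bigl(4a,\tfrac43a\Bigr)=\frac{a\bigl[2(1-4k)(1-2k)+1-\tfrac83k\bigr]}{4(1-3k)},
\]
which is precisely the assumed upper bound for $K^\perp$. This proves the lemma in the third case.

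The main obstacle is the third case. One must remember to use the Lichnerowicz bound $\lambda_1(M)\ge\frac43a$ and to verify the two monotonicity signs. The first two cases are straightforward once the sum identity \eqref{scalarandKperp} is invoked.
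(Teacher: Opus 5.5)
Your proposal is correct and follows the paper's proof essentially step for step: the plane-sum identity \eqref{scalarandKperp} handles the first two conditions, and $R\ge 4a$, Lichnerowicz's bound $\lambda_1(M)\ge\frac43 a$ and monotonicity of the target bound in $R$ and $\lambda_1(M)$ handle the third. You are a bit more careful than the paper in two places. You verify the two monotonicity signs explicitly, which the paper only asserts. In the second case you correctly obtain $RD'/(24D)$, which equals the target bound, whereas the paper's intermediate display there has denominator $24[(1-4k)R+\lambda_1(M)]$ and is off by a factor of $2$.
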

\begin{proof}[Proof of Lemma \ref{recond}]
Fix an orthonormal basis $\{e_i\}^4_{i=1}$ for $T_xM$, and let $P_1, P_2, P_3$
be the $2$-planes spanned by $\{e_1, e_2\}$, $\{e_1, e_3\}$, $\{e_1, e_4\}$, respectively.
Recall the identity
\[
K^\perp(P_1)+K^\perp(P_2)+K^\perp(P_3)=\frac{R}{4}.
\]
First assume condition \eqref{tiji1} holds. We rearrange the sum identity to estimate
$K^\perp(P_3)$:
\begin{align*}
K^\perp(P_3) &\le \frac{R}{4}-2\cdot\frac{(1-16k^2)R^2+8kR\lambda_1(M)}{24[(1-4k)R+\lambda_1(M)]} \\
&=\frac{2(1-4k)(1-2k)R^2+(3-8k)\lambda_1(M)R}{12[(1-4k)R+\lambda_1(M)]},
\end{align*}
which yields the desired upper bound.

Next, suppose  \eqref{tiji2} is satisfied. From the plane-sum identity we obtain
\[
\frac{R}{4}\ge K^\perp(P_3)+2\cdot\frac{(1-16k^2)R+8k\lambda_1(M)}{4(1-4k)(1-2k)R+2(3-8k)\lambda_1(M)} K^\perp(P_3).
\]
Solving $K^\perp(P_3)$ gives that
\[
K^\perp(P_3)\le\frac{2(1-4k)(1-2k)R^2+(3-8k)\lambda_1(M)R}{24[(1-4k)R+\lambda_1(M)]},
\]
and the conclusion follows immediately.

Finally, assume $Ric\ge a$. Then we have $R\ge 4a$. Moreover, Lichnerowicz's theorem
(see \cite{lic}) gives the spectral lower bound $\lambda_1(M)\ge\frac{4}{3}a$.
Define the function
\[
F(R,\lambda_1(M)):=\frac{2(1-4k)(1-2k)R^2+(3-8k)\lambda_1(M)R}{12[(1-4k)R+\lambda_1(M)]}.
\]
Then it is nondecreasing in both parameters $R$ and $\lambda_1(M)$.
Substituting minimal values $R=4a$ and $\lambda_1(M)=\frac 43a$,
we compute
\begin{align*}
\frac{2(1-4k)(1-2k)R^2+(3-8k)\lambda_1(M)R}{12[(1-4k)R+\lambda_1(M)]}
&\ge\frac{2(1-4k)(1-2k)(4a)^2+(3-8k)\frac{4}{3}a\cdot4a}{12[(1-4k)4a+\frac{4}{3}a]}\\
&=\frac{2(1-4k)(1-2k)+1-\tfrac{8}{3}k}{4(1-3k)}a.
\end{align*}
Combined with condition \eqref{tiji3}, this implies the desired estimate.
\end{proof}

We now proceed to prove Theorem \ref{Rigid2} by following the argument of
\cite{CaTr22}.

\begin{proof}[Proof of Theorem \ref{Rigid2}]
If the closed shrinker $(M,g,f)$ is half conformally flat, the desired
conclusion follows directly from the classification result of Chen–Wang \cite{CWy}.
We now suppose the contrary and derive a contradiction via the following argument.
Since $g$ is analytic, the quantities $|W^\pm|^2$ are real analytic functions on $M$.
Moreover, if $W^\pm\not\equiv 0$, the zero set
\[
\Omega:=\left\{x\in M\,\,|\,\,|W^+|(x)=0\,\, \text{or}\,\,|W^-|(x)=0\right\}
\]
is finite.

Suppose $(M,g,f)$ is not half conformally flat. Then $\int_M |W^+|dv>0$
and $\int_M |W^-|dv>0$, so there exists a constant $t>0$ such that
\begin{equation}\label{relaW}
\int_M(|W^+|-t|W^-|)dv=0.
\end{equation}
We apply the definition of the $f$-Laplacian, integration by parts and the
basic shrinker identity $R+\Delta f=4\lambda$ to compute that
\begin{align*}
0&=\frac 12\int_M\Delta\left(|W^+|^2+t^2|W^-|^2\right)dv\\
&=\frac 12\int_M\Delta_f\left(|W^+|^2+t^2|W^-|^2\right)dv
+\frac 12\int_M\nabla f\cdot\nabla\left(|W^+|^2+t^2|W^-|^2\right)dv\\
&=\frac 12\int_M\Delta_f\left(|W^+|^2+t^2|W^-|^2\right)dv
-\frac 12\int_M\Delta f \cdot\left(|W^+|^2+t^2|W^-|^2\right)dv\\
&=\frac 12\int_M\Delta_f\left(|W^+|^2+t^2|W^-|^2\right)dv
+\frac 12\int_M(R-4\lambda)\left(|W^+|^2+t^2|W^-|^2\right)dv.
\end{align*}
Applying the Weitzenb\"ock formula \eqref{Weitz} and the Kato inequality
$|\nabla W^\pm|\ge|\nabla |W^\pm||$, the above equality becomes
\begin{align*}
0&\ge\int_M\left(|\nabla|W^+||^2+2\lambda|W^+|^2-18\,\mathrm{det}W^+
-\frac 12\langle Ric\circ Ric, W^+\rangle\right) dv\\
&\quad+t^2\int_M\left(|\nabla|W^-||^2+2\lambda|W^-|^2-18\,\mathrm{det}W^{-}
-\frac 12\langle Ric\circ Ric, W^-\rangle\right) dv\\
&\quad+\frac 12\int_M(R-4\lambda) \left(|W^+|^2+t^2|W^-|^2\right) dv.
\end{align*}
Rearranging terms yields
\begin{align*}
0&\ge\frac 12\int_M\left(\big|\nabla(|W^+|+t|W^-|)\big|^2+\big|\nabla(|W^+|-t|W^-|)\big|^2\right)dv\\
&\quad+\int_M\left(2\lambda|W^+|^2-18\,\mathrm{det}W^+
-\frac 12\langle Ric\circ Ric, W^+\rangle\right)dv\\
&\quad+t^2\int_M\left(2\lambda|W^-|^2-18\,\mathrm{det}W^{-}
-\frac 12\langle Ric\circ Ric, W^-\rangle\right)dv\\
&\quad+\frac 12\int_M(R-4\lambda) \left(|W^+|^2+t^2|W^-|^2\right)dv.
\end{align*}
Let $\lambda_1(M)$ denote the first positive eigenvalue of the Laplacian
on functions. By the Raleigh quotient characterization of
$\lambda_1(M)$ and identity \eqref{relaW}, we have
\[
\int_M\big|\nabla(|W^+|-t|W^-|)\big|^2dv\ge\lambda_1(M)\int_M\Big(|W^+|-t|W^-|\Big)^2dv.
\]
We insert this estimate and discard the nonnegative gradient term $\int_M\big|\nabla(|W^+|+t|W^-|)\big|^2dv$
to deduce that
\begin{align*}
0&\ge\frac{\lambda_1(M)}{2}\int_M\Big(|W^+|-t|W^-|\Big)^2dv
+\int_M\left(2\lambda|W^+|^2-18\,\mathrm{det}W^+-\frac 12\langle Ric\circ Ric, W^+\rangle\right)dv\\
&\quad+t^2\int_M\left(2\lambda|W^-|^2-18\,\mathrm{det}W^{-}-\frac 12\langle Ric\circ Ric, W^-\rangle\right)dv\\
&\quad+\frac 12\int_M(R-4\lambda) \left(|W^+|^2+t^2|W^-|^2\right)dv.
\end{align*}
Applying Lemma \ref{detWup} and the second equality of Proposition \ref{Weitz}, i.e.,
\[
-18\,\mathrm{det}W^{\pm}\ge-3\lambda^{\pm}_3|W^\pm|^2
\quad\text{and}\quad
\frac 12\int_M\langle Ric\circ Ric, W^{\pm}\rangle dv=2\int_M |\delta W^{\pm}|^2 dv,
\]
the above inequality becomes
\begin{align*}
0&\ge\frac{\lambda_1(M)}{2}\int_M\Big(|W^+|-t|W^-|\Big)^2dv
+\int_M\Big(2\lambda|W^+|^2-3\lambda^{+}_3|W^+|^2-2|\delta W^{+}|^2\Big)dv\\
&\quad+t^2\int_M\Big(2\lambda|W^-|^2-3\lambda^{-}_3|W^-|^2-2|\delta W^{-}|^2\Big)dv\\
&\quad+\frac 12\int_M(R-4\lambda) \left(|W^+|^2+t^2|W^-|^2\right)dv.
\end{align*}
Expanding and grouping terms by powers of $t$, we obtain
\begin{align*}
0&\ge t^2\int_M \left[|W^-|^2\left(\frac{\lambda_1(M)}{2}
+\frac{R}{2}-3\lambda^{-}_3\right)-2|\delta W^-|^2\right]dv-t\lambda_1(M)\int_M|W^+||W^-|dv\\
&\quad+\int_M\left[|W^+|^2\left(\frac{\lambda_1(M)}{2}
+\frac{R}{2}-3\lambda^{+}_3\right)-2|\delta W^+|^2\right]dv.
\end{align*}
Using the theorem's integral pinching assumption
$\int_M|\delta W^{\pm}|^2dv\le k\int_M R|W^{\pm}|^2dv$,
where $0\le k\le1/4$, we further get
\begin{equation}
\begin{aligned}\label{quadr}
0&\ge t^2\int_M |W^-|^2\left(\frac{\lambda_1(M)}{2}
+\frac{R}{2}-3\lambda^{-}_3-2kR\right)dv-t\lambda_1(M)\int_M|W^+||W^-|dv\\
&\quad+\int_M|W^+|^2\left(\frac{\lambda_1(M)}{2}+\frac{R}{2}-3\lambda^{+}_3-2kR\right)dv.
\end{aligned}
\end{equation}
The integrand inside \eqref{quadr} is a quadratic polynomial in the real variable $t$.
Define the pointwise integrand
\begin{align*}
\mathcal{P}(t)&:=t^2 |W^-|^2\left(\frac{\lambda_1(M)}{2}
+\frac{R}{2}-3\lambda^{-}_3-2kR\right)-t\lambda_1(M) |W^+||W^-|\\
&\quad+|W^+|^2\left(\frac{\lambda_1(M)}{2}+\frac{R}{2}-3\lambda^{+}_3-2kR\right).
\end{align*}
We will prove $\mathcal{P}(t)\geq 0$ on $M$.

First, we use the first inequality of Lemma \ref{ineqlem}:
$-3\lambda_3^{\pm}\ge\frac R2-6K^\perp_{max}$. This gives a uniform lower bound
for the leading coefficient of $\mathcal{P}(t)$:
\begin{align*}
\frac{\lambda_1(M)}{2}+\frac{R}{2}-3\lambda^{-}_3-2kR\ge\frac{\lambda_1(M)}{2}+(1-2k)R-6K^\perp_{max}>0,
\end{align*}
where in the second inequality we used the fact
\begin{align*}
K^\perp_{max}:=\frac{(16k^2-12k+2)R^2+(3-8k)\lambda_1(M)R}{12[(1-4k)R+\lambda_1(M)]}
<\frac{\lambda_1(M)}{12}+\frac{1-2k}{6}R.
\end{align*}
The discriminant of the quadratic $\mathcal{P}(t)$ reads
\begin{align*}
D&=|W^+|^2|W^-|^2\Big[\lambda^2_1(M)-\big(\lambda_1(M)+(1-4k)R-6\lambda^{-}_3\big)
\big(\lambda_1(M)+(1-4k)R-6\lambda^{+}_3\big)\Big]\\
&=-|W^+|^2|W^-|^2\Big[\big((1-4k)R-6\lambda^{-}_3\big)\big((1-4k)R-6\lambda^{+}_3\big)\\
&\qquad\qquad\qquad\qquad+\lambda_1(M)\big(2(1-4k)R-6(\lambda^{-}_3+\lambda^{+}_3)\big)\Big].
\end{align*}
Our goal is to show $D\le 0$. For brevity, let
\[
\widetilde{D}:=\big[(1-4k)R-6\lambda^{-}_3\big]\big[(1-4k)R-6\lambda^{+}_3\big],
+\lambda_1(M)\big[2(1-4k)R-6(\lambda^{-}_3+\lambda^{+}_3)\big]
\]
so proving $D\le 0$ is equivalent to proving $\widetilde{D}\ge0$.
Clearly, $\widetilde{D}$ can be written as
\[
\widetilde{D}=\big[(1-4k)R-3\lambda^{-}_3-3\lambda^{+}_3\big]^2-9(\lambda^{+}_3-\lambda^{-}_3)^2
+2\lambda_1(M)\big[(1-4k)R-3(\lambda^{-}_3+\lambda^{+}_3)\big].
\]
To verify non-negativity of $\widetilde{D}$, we apply Lemma \ref{ineqlem} and the definition of $K^\perp_{max}$ to obtain two pointwise estimates:
\[
(1-4k)R-3\lambda^{-}_3-3\lambda^{+}_3\ge\left(\frac 32-4k\right)R-6K^\perp_{max}\ge0
\]
and
\[
|\lambda^{+}_3-\lambda^{-}_3|\le2\left(K^\perp_{max}-\frac{R}{12}\right)\ge 0,
\]
where $0\le k\le1/4$. Substituting the above two estimates into the expression for
$\widetilde{D}$ yields
\begin{align*}
\widetilde{D}&\ge\left[\left(\frac 32-4k\right)R-6K^\perp_{max}\right]^2-36\left(K^\perp_{max}-\frac{R}{12}\right)^2
+2\lambda_1(M)\left[\left(\frac 32-4k\right)R-6K^\perp_{max}\right]\\
&=0,
\end{align*}
where we used
\[
K^\perp_{max}=\frac{(16k^2-12k+2)R^2+(3-8k)\lambda_1(M)R}{12[(1-4k)R+\lambda_1(M)]}.
\]
We therefore conclude
\[
D=-|W^+|^2|W^-|^2\widetilde{D}\le0.
\]
Since the quadratic $\mathcal{P}(t)$ has a positive leading coefficient and non-positive discriminant, we deduce $\mathcal{P}(t)\ge0$ pointwise at every $x\in M$ for all real $t$. Combining this with \eqref{quadr}, we show that $\mathcal{P}(t)$ must be zero at each point and all inequalities become equalities. In particular,
by Lemma \ref{ineqlem}, we have $|W^+|\cdot|W^-|\equiv0$ in $M$. Combining
this with the preceding finite set $\Omega$ yields a contradiction about
our assumption of not half conformally flat. This proves Theorem \ref{Rigid2}
under the pinching condition \eqref{la}.

The remaining cases \eqref{lb}, \eqref{lc}
and \eqref{1d} follow directly by applying Lemma \ref{recond}.
\end{proof}

\section{Weighted integral pinching curvature}\label{discuss}

In this section, we prove another rigidity theorem for shrinkers,
namely Theorem \ref{Rigid3}, under a weighted integral curvature pinching
condition. Compared with Theorem \ref{Rigid2}, the key distinction is that
all integral computations here are performed with respect to the weighted
measure $e^{-f}dv$.

\begin{proof}[Proof of Theorem \ref{Rigid3}]
Similar to the proceeding argument of Theorem \ref{Rigid2}, if the closed shrinker
is half conformally flat, by Chen-Wang's classification \cite{CWy}, the conclusion
follows. If not, we can get a contradiction. Suppose that $(M,g,f)$ is not half
conformally flat. Then
\[
\int_M |W^+|e^{-f}dv>0\quad \text{and} \quad\int_M |W^-|e^{-f}dv>0.
\]
Thus there exists a constant $t>0$ such that
\begin{equation}\label{relaW2}
\int_M\left(|W^+|-t|W^-|\right)e^{-f}dv=0.
\end{equation}
Applying stokes' theorem, the Weitzenb\"ock formula \eqref{Weitz} and
the Kato inequality $|\nabla W^\pm|\ge|\nabla |W^\pm||$, we compute that
\begin{align*}
0&=\frac 12\int_M\Delta_f\left(|W^+|^2+t^2|W^-|^2\right)e^{-f}dv\\
&\ge\int_M\left(|\nabla|W^+||^2+2\lambda|W^+|^2-18\,\mathrm{det}W^+
-\frac 12\langle Ric\circ Ric, W^+\rangle\right)e^{-f}dv\\
&\quad+t^2\int_M\left(|\nabla|W^-||^2+2\lambda|W^-|^2-18\,\mathrm{det}W^{-}
-\frac 12\langle Ric\circ Ric, W^-\rangle\right)e^{-f}dv.
\end{align*}
Rearranging terms yields
\begin{equation}\label{WKredu}
\begin{aligned}
0&\ge\frac 12\int_M\left(\big|\nabla(|W^+|+t|W^-|)\big|^2+\big|\nabla(|W^+|-t|W^-|)\big|^2\right)e^{-f}dv\\
&\quad+\int_M\left(2\lambda|W^+|^2-18\,\mathrm{det}W^+
-\frac 12\langle Ric\circ Ric, W^+\rangle\right)e^{-f}dv\\
&\quad+t^2\int_M\left(2\lambda|W^-|^2-18\,\mathrm{det}W^{-}
-\frac 12\langle Ric\circ Ric, W^-\rangle\right)e^{-f}dv.
\end{aligned}
\end{equation}
Let $\lambda_{1,f}(M)$ denote the first positive eigenvalue of the $f$-Laplacian
on functions. By the variation characterization of $\lambda_{1,f}(M)$,
combined with identity \eqref{relaW2}, we have
\[
\int_M\big|\nabla(|W^+|-t|W^-|)\big|^2e^{-f}dv\ge\lambda_{1,f}(M)\int_M\Big(|W^+|-t|W^-|\Big)^2e^{-f}dv.
\]
Substituting this estimate into \eqref{WKredu} and discarding the non-negative
gradient term $\int_M\big|\nabla(|W^+|+t|W^-|)\big|^2e^{-f}dv$, we obtain
\begin{align*}
0&\ge\frac{\lambda_{1,f}(M)}{2}\int_M\Big(|W^+|-t|W^-|\Big)^2e^{-f}dv\\
&\quad+\int_M\left(2\lambda|W^+|^2-18\,\mathrm{det}W^+-\frac 12\langle Ric\circ Ric, W^+\rangle\right)e^{-f}dv\\
&\quad+t^2\int_M\left(2\lambda|W^-|^2-18\,\mathrm{det}W^{-}-\frac 12\langle Ric\circ Ric, W^-\rangle\right)e^{-f}dv.
\end{align*}
From Lemma \ref{detWup} and the theorem's assumption \eqref{piweyl}, we have
two key facts:
\[
-18\,\mathrm{det}W^{\pm}\ge-3\lambda^{\pm}_3|W^\pm|^2
\quad\text{and}\quad
-\frac 12\int_M\langle Ric\circ Ric, W^{\pm}\rangle e^{-f}dv\ge2k\int_M|W^{\pm}|^2e^{-f}dv.
\]
Putting them into the above inequality, we get
\begin{align*}
0&\ge\frac{\lambda_{1,f}(M)}{2}\int_M\Big(|W^+|-t|W^-|\Big)^2e^{-f}dv
+\int_M\Big(2\lambda|W^+|^2-3\lambda^{+}_3|W^+|^2-2k|W^{+}|^2\Big)e^{-f}dv\\
&\quad+t^2\int_M\Big(2\lambda|W^-|^2-3\lambda^{-}_3|W^-|^2-2k|W^{-}|^2\Big)e^{-f}dv.
\end{align*}
Rearranging terms by powers of $t$, we arrive at
\begin{equation}
\begin{aligned}\label{quadr2}
0&\ge t^2\int_M|W^-|^2\left(\frac{\lambda_{1,f}(M)}{2}
+2\lambda-3\lambda^{-}_3-2k\right)e^{-f}dv-t\lambda_{1,f}(M)\int_M|W^+||W^-|e^{-f}dv\\
&\quad+\int_M|W^+|^2\left(\frac{\lambda_{1,f}(M)}{2}
+2\lambda-3\lambda^{+}_3-2k\right)e^{-f}dv.
\end{aligned}
\end{equation}
We now analyze the pointwise integrand appearing inside \eqref{quadr2}.
Define the quadratic function
\begin{align*}
\mathcal{Q}(t)&:=t^2 |W^-|^2\left(\frac{\lambda_{1,f}(M)}{2}
+2\lambda-3\lambda^{-}_3-2k\right)-t\lambda_{1,f}(M) |W^+||W^-|\\
&\quad+|W^+|^2\left(\frac{\lambda_{1,f}(M)}{2}+2\lambda-3\lambda^{+}_3-2k\right).
\end{align*}
We claim $\mathcal{Q}(t)\ge0$ pointwise everywhere on $M$.

The expression $\mathcal{Q}(t)$ is quadratic in the real variable $t$.
First, applying the first inequality of Lemma \ref{ineqlem}:
$-3\lambda_3^{\pm}\ge\frac R2-6K^\perp_{max}$, we have a uniform positive
lower bound for the leading coefficient of $\mathcal{Q}(t)$:
\begin{align*}
\frac{\lambda_{1,f}(M)}{2}+2\lambda-3\lambda^{-}_3-2k
\ge\frac{\lambda_{1,f}(M)}{2}+2(\lambda-k)+\frac R2-6K^\perp_{max}>0,
\end{align*}
where in the second inequality we used the fact
\begin{align*}
K^\perp_{max}:=\frac{(4\lambda-4k)(4\lambda-4k+R)
+\lambda_{1,f}(M)(8\lambda-8k+R)}{12(4\lambda-4k+\lambda_{1,f}(M))}
<\frac{\lambda_{1,f}(M)+R}{12}+\frac{\lambda-k}{3}.
\end{align*}
Next, we compute the discriminant of $\mathcal{Q}(t)$:
\begin{align*}
D&=|W^+|^2|W^-|^2\Big[\lambda^2_{1,f}(M)-\big(\lambda_{1,f}(M)
+4\lambda-4k-6\lambda^{-}_3\big)\big(\lambda_{1,f}(M)+4\lambda-4k-6\lambda^{+}_3\big)\Big]\\
&=-|W^+|^2|W^-|^2\Big[\big(4\lambda-4k-6\lambda^{-}_3\big)\big(4\lambda-4k-6\lambda^{+}_3\big)
+\lambda_{1,f}(M)\big(8\lambda-8k-6(\lambda^{-}_3+\lambda^{+}_3)\big)\Big].
\end{align*}
We will prove $D\le 0$. For simplicity, we set
\[
\widetilde{D}:=\big(4\lambda-4k-6\lambda^{-}_3\big)\big(4\lambda-4k-6\lambda^{+}_3\big)
+\lambda_{1,f}(M)\big(8\lambda-8k-6(\lambda^{-}_3+\lambda^{+}_3)\big)
\]
and our goal reduces to showing $\widetilde{D}\ge0$. Notice that $\widetilde{D}$ can be written as
\begin{equation}\label{Ddelta}
\widetilde{D}=\big[4\lambda-4k-3\lambda^{-}_3-3\lambda^{+}_3\big]^2-9(\lambda^{+}_3-\lambda^{-}_3)^2
+2\lambda_{1,f}(M)\big[4\lambda-4k-3(\lambda^{-}_3+\lambda^{+}_3)\big].
\end{equation}
To verify $\widetilde{D}\ge0$, we apply Lemma \ref{ineqlem} and the definition $K^\perp_{max}$
to get
\[
4\lambda-4k-3\lambda^{-}_3-3\lambda^{+}_3\ge4\lambda-4k+\frac R2-6K^\perp_{max}\ge0
\]
and
\[
|\lambda^{+}_3-\lambda^{-}_3|\le2\left(K^\perp_{max}-\frac{R}{12}\right)\ge 0,
\]
where $0\le k\le1/4$. Using two estimates, we have a lower bound of \eqref{Ddelta}
\begin{align*}
\widetilde{D}&\ge\left(4\lambda-4k+\frac R2-6K^\perp_{max}\right)^2-36\left(K^\perp_{max}-\frac{R}{12}\right)^2
+2\lambda_{1,f}(M)\left(4\lambda-4k+\frac R2-6K^\perp_{max}\right)\\
&=0,
\end{align*}
where in the second equality we used
\[
K^\perp_{max}=\frac{(4\lambda-4k)(4\lambda-4k+R)+\lambda_{1,f}(M)(8\lambda-8k+R)}{12(4\lambda-4k+\lambda_{1,f}(M))}.
\]
Therefore we conclude
\[
D=-|W^+|^2|W^-|^2\widetilde{D}\le0,
\]
which confirms the claim that $\mathcal{Q}(t)\geq 0$ pointwise everywhere on $M$.
Combining this claim with \eqref{quadr2}, we conclude that
the integrand $\mathcal{Q}(t)\equiv 0$ at every point, and all inequalities
above become equalities. In particular, by Lemma \ref{ineqlem}, we get
$|W^+|\cdot|W^-|\equiv0$ in $M$. The rest of the proof of Theorem
\ref{Rigid3} by the similar argument of Theorem \ref{Rigid2}.
\end{proof}

\section{Hitchin-Thorpe type inequality}\label{HTI}
In this section, we prove Theorem \ref{HT} by
a similar argument of \cite{CaJW}.
\begin{proof}[Proof of Theorem \ref{HT}]
For a closed four-dimensional gradient Ricci soliton, integrating identity \eqref{Sequat}
and using \eqref{tra}, we compute
\begin{align*}
\int_M|Ric|^2dv&=\lambda\int_M R dv-\frac 12\int_M\Delta_f Rdv\\
&=4\lambda^2 Vol(M)+\frac 12\int_M\langle\nabla f, \nabla R\rangle dv\\
&=4\lambda^2 Vol(M)-\frac 12\int_MR \Delta f  dv\\
&=4\lambda^2 Vol(M)-\frac 12\int_MR(4\lambda-R)dv\\
&=\int_M \frac{R^2}{2}dv-4\lambda^2 Vol(M),
\end{align*}
where in the last line we used $\int_M Rdv=4\lambda Vol(M)$ from identity \eqref{tra}.
Substituting this into the four-dimensional Gauss-Bonnet-Chern formula for
closed manifold
\[
8\pi^2\chi(M)=\int_M\left(|W^+|^2+|W^-|^2+\frac{R^2}{24}-\frac{1}{2}|\rd|^2\right)dv
\]
and using the decomposition $|\rd|^2=|Ric|^2-\frac{R^2}{4}$, we obtain
\begin{equation}\label{HT1}
8\pi^2\chi(M)=\int_M \left(|W^+|^2+|W^-|^2\right)dv-\frac{1}{12}\int_MR^2dv+2\lambda^2Vol(M).
\end{equation}
Combining this with the classical Hirzebrush's theorem
\[
12\pi^2\tau(M)=\int_M(|W^+|^2-|W^-|^2)dv,
\]
we get that
\begin{equation}\label{CTfor}
2\chi(M)\pm3\tau(M)=\frac{1}{2\pi^2}\int_M|W^\pm|^2dv-\frac{1}{48\pi^2}\int_MR^2dv
+\frac{\lambda^2}{2\pi^2}Vol(M).
\end{equation}
By Theorem 4.1 in \cite{CaTr16}, the pinching assumption \eqref{dWpin} implies
\[
\int_M|W^\pm|^2dv>\frac{4}{11}\pi^2(2\chi(M)\pm3\tau(M)).
\]
Notice that since we assumes that the shrinker is non-trivial, we may assume $W^\pm\not\equiv 0$. 

In the following. we assume  $\varepsilon\lambda\le K^{\perp}\le\sigma\lambda$ for some constants
$\varepsilon\le \sigma\le 1/2$.
Then $K^{\perp}\le\sigma \lambda$ implies $R\le 12\sigma\lambda$.
Substituting these into \eqref{CTfor} and using $\int_M Rdv=4\lambda Vol(M)$
from \eqref{tra}, we get that
\begin{equation}
\begin{aligned}\label{CTfor2a}
\frac{9}{11}\left[2\chi(M)\pm3\tau(M)\right]&>-\frac{1}{48\pi^2}\int_MR^2dv
+\frac{\lambda^2}{2\pi^2}Vol(M)\\
&\ge\frac{1}{2\pi^2}\left(1-2\sigma\right)\lambda^2Vol(M)
\end{aligned}
\end{equation}
where $\sigma\le1/2$.

On the other hand, by Lemma \ref{detWup2}, $K^{\perp}\ge\varepsilon\lambda$ implies that
\[
|W^+|+|W^-|\le\sqrt{6}\left(\frac{R}{6}-2\varepsilon\lambda\right),
\]
and hence
\[
|W^+|^2+|W^-|^2\le 24\varepsilon^2\lambda^2+\frac{R^2}{6}-4\varepsilon\lambda R.
\]
Substituting this into \eqref{HT1}, and using $R\le 12\sigma\lambda$
and $\int_M Rdv=4\lambda Vol(M)$ yields
\begin{align*}
8\pi^2\chi(M)&<\frac{1}{12}\int_M R^2dv+2(12\varepsilon^2-8\varepsilon+1)\lambda^2Vol(M)\\
&\le4\sigma \lambda^2Vol(M)+2(12\varepsilon^2-8\varepsilon+1)\lambda^2Vol(M)\\
&=2(12\varepsilon^2-8\varepsilon+2\sigma+1)\lambda^2Vol(M).
\end{align*}
Note that $2(12\varepsilon^2-8\varepsilon+2\sigma+1)\ge2(12\varepsilon^2-6\varepsilon+1)=24(\varepsilon-\frac14)^2+\frac 12>0$. Plugging this into \eqref{CTfor2a}, we get
\[
\left(\frac{2}{3}-\frac{22}{27}\cdot\frac{1-2\sigma}{12\varepsilon^2-8\varepsilon+2\sigma+1}\right)\chi(M)>|\tau(M)|
\]
provided the coefficient of $\chi(M)$ is positive. Thus the theorem follows.
\end{proof}

\textbf{Acknowledgement}.  J.-Y. Wu is partially supported by NSFC (Grant No. 12571144).

\medskip 

\textbf{AI disclosure}. OpenAI's ChatGPT only assisted the authors in 
checking and correcting Lemmas
\ref{pest} and \ref{pest2}
and Theorem \ref{HT} of this paper.
The authors take full responsibility for the mathematical content.


\end{document}